\newcommand{\defiff}{\stackrel{\mbox{\scriptsize $\textrm{def}$}}{\iff}}
\newlength\knuthian@fdfive
\def\mathpal@save#1{\let\was@math@style=#1\relax}
\def\utilde#1{\mathpalette\mathpal@save
              {\setbox124=\hbox{$\was@math@style#1$}%
\setbox125=\hbox{$\fam=3\global\knuthian@fdfive=\fontdimen5\font$}
\setbox125=\hbox{$\widetilde{\vrule height 0pt depth 0pt width \wd124}$}%
               \baselineskip=1pt\relax
               \lineskiplimit=\z@\relax
               \lineskip=1pt\relax
               \vtop{\copy124\copy125\vskip -\knuthian@fdfive}}}
\declaretheorem[numberwithin=section]{theorem}
\newtheorem{lemma}[theorem]{Lemma}
\newtheorem{corollary}[theorem]{Corollary}
\newtheorem{proposition}[theorem]{Proposition}
\newtheorem*{claim}{Claim}
\newtheorem*{theorem*}{Theorem}
\newtheorem*{corollary*}{Corollary}
\theoremstyle{definition}
\newtheorem{definition}[theorem]{Definition}
\theoremstyle{remark}
\newtheorem{remark}[theorem]{Remark}
\title{Characterizations of ordinal analysis}
\author{James Walsh}
\thanks{This paper is a synthesis of two earlier preprints: \cite{walsh2021characterization} and \cite{walsh2022robust2}. Thanks to Leszek Kolodziejczyk for catching a number of errors in an early version of this work and to Fedor Pakhomov for simplifying some of our proofs. Thanks also to the referee for many helpful comments and corrections and for suggesting the inclusion of \textsection \ref{conceptual}. Finally, thanks to Dan Appel, Antonio Montalb\'{a}n, and Benny Siskind for helpful discussion.}
\thanks{\noindent  \textbf{MSC:} 03F15, 03F35, 03F40, 03F25.}
\address{Sage School of Philosophy, Cornell University}
\email{jameswalsh@cornell.edu}
\begin{document}

\maketitle

\begin{abstract}
Ordinal analysis is a research program wherein recursive ordinals are assigned to axiomatic theories. According to conventional wisdom, ordinal analysis measures the strength of theories. Yet what is the attendant notion of strength? In this paper we present abstract characterizations of ordinal analysis that address this question. 

First, we characterize ordinal analysis as a partition of $\Sigma^1_1$-definable and $\Pi^1_1$-sound theories, namely, the partition whereby two theories are equivalent if they have the same proof-theoretic ordinal. We show that no equivalence relation $\equiv$ is finer than the ordinal analysis partition if both: (1) $T\equiv U$ whenever $T$ and $U$ prove the same $\Pi^1_1$ sentences; (2) $T\equiv T+U$ for every set $U$ of true $\Sigma^1_1$ sentences. In fact, no such equivalence relation makes a single distinction that the ordinal analysis partition does not make.

Second, we characterize ordinal analysis as an ordering on arithmetically-definable and $\Pi^1_1$-sound theories, namely, the ordering wherein $T\leq  U$ if the proof-theoretic ordinal of $T$ is less than or equal to the proof-theoretic ordinal of $U$. The standard ways of measuring the strength of theories are consistency strength and inclusion of $\Pi^0_1$ theorems. We introduce analogues of these notions---$\Pi^1_1$-reflection strength and inclusion of $\Pi^1_1$ theorems---in the presence of an oracle for $\Sigma^1_1$ truths, and prove that they coincide with the ordering induced by ordinal analysis.

\end{abstract}

\section{Introduction}

Measuring the strength of axiomatic theories is a recurring motif in mathematical logic. Ordinal analysis is one research program within mathematical logic wherein this motif emerges. In ordinal analysis, an axiomatic theory is associated, in a principled way, with a recursive ordinal called its \emph{proof-theoretic ordinal}. It is often claimed that by calculating the proof-theoretic ordinal of a theory, we thereby measure its strength. Yet what exactly is the attendant notion of strength? That is, if we have determined the proof-theoretic ordinal of a theory, in what sense have we determined its strength?

To answer this question, we will give two characterizations of ordinal analysis that make no reference to the notion of ``proof-theoretic ordinals.'' Our first theorem characterizes the \emph{partition} of theories induced by ordinal analysis and our second theorem characterizes the \emph{ordering} on theories induced by ordinal analysis. These characterizations arise from different directions, so it makes the most sense to describe them separately.

\subsection{The Ordinal Analysis Partition}

One way of interpreting ordinal analysis is as a classification program. That is, ordinal analysis induces a partition of theories, where $T$ and $U$ are equivalent if $T$ and $U$ have the same proof-theoretic ordinal. Given this perspective, to understand the sense in which ordinal analysis measures strength, we should know what features of theories this partition is sensitive to.

To this end, we characterize the ordinal analysis partition in abstract terms. In particular, we characterize it as the most fine-grained partition satisfying two natural conditions; these conditions are articulated without using the notion ``proof-theoretic ordinal.'' The first main theorem is that no partition satisfying these conditions makes a distinction that the ordinal analysis partition does not make. This characterization is evidence of the naturalness and robustness of the ordinal analysis partition.

The theories that we will work with are $\Sigma^1_1$-definable and $\Pi^1_1$-sound extensions of $\mathsf{ACA}_0$. For more information on $\mathsf{ACA}_0$, see \cite{simpson2009subsystems}. For other results on $\Sigma^1_1$-definable and $\Pi^1_1$-sound theories, see \cite{walsh2021incompleteness}.

Before continuing, let's recall a standard definition of the proof-theoretic ordinal of a theory. In what follows, $\mathsf{WF}(\prec)$ is a sentence expressing the well-foundedness of $\prec$:
$$\mathsf{WF}(\prec):= \forall X\big(\exists x\in X \to \exists x\in X \; \forall y \in X \; y \not\prec x \big).$$
\begin{definition}
$|T|_{\mathsf{WF}}$ is the supremum of the order types of the primitive recursive presentations $\prec$ of well-orderings such that $T\vdash \mathsf{WF}(\prec)$.
\end{definition}

Now let's introduce the two conditions that we will use to characterize ordinal analysis. Let $T\equiv_{\Pi^1_1}U$ mean that $T$ and $U$ have the same $\Pi^1_1$ theorems. The first important property of ordinal analysis is that $|T|_{\mathsf{WF}}=|U|_{\mathsf{WF}}$ whenever $T\equiv_{\Pi^1_1}U$. This is clearly true since well-foundedness claims for primitive recursive well-orderings are $\Pi^1_1$. To introduce the second condition, we recall a theorem that is commonly attributed to Kreisel:\footnote{For proofs of the original Kreisel theorem, see \cite{pohlers2008proof} Theorem 6.7.5 or \cite{rathjen1999realm} Proposition 2.24. }\footnote{Note that $T+\varphi$ is $T\cup\{\varphi\}$ and $T+V$ is $T\cup V$.}
\begin{theorem}[Kreisel]\label{kreisel}
For any recursively axiomatized $\Pi^1_1$-sound extension $T$ of $\mathsf{ACA}_0$ and true $\Sigma^1_1$ sentence $\varphi$, $|T|_{\mathsf{WF}}=|T+\varphi|_{\mathsf{WF}}$.
\end{theorem}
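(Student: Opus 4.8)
The plan is to prove the nontrivial inequality $|T+\varphi|_{\mathsf{WF}} \le |T|_{\mathsf{WF}}$; the other direction is immediate since $T \subseteq T+\varphi$. Fix a primitive recursive well-ordering $\prec$ with $T+\varphi \vdash \mathsf{WF}(\prec)$; I will construct a primitive recursive well-ordering $\prec^{*}$ with $|\prec^{*}| \ge |\prec|$ and $T \vdash \mathsf{WF}(\prec^{*})$, which suffices by taking suprema. First I would rewrite $\varphi$ as a well-foundedness statement: by the arithmetized Kleene normal form, $\varphi$ is $\mathsf{ACA}_0$-provably equivalent to the assertion that a primitive recursive tree $S_{\varphi} \subseteq \omega^{<\omega}$ (the tree of finite approximations to a witness for $\varphi$) has an infinite branch, and since for trees on $\omega$ well-foundedness is $\mathsf{ACA}_0$-provably equivalent to well-foundedness of the Kleene--Brouwer ordering (the nontrivial direction extracts a branch from a Kleene--Brouwer-descending sequence using arithmetical comprehension; see \cite{simpson2009subsystems}), $\varphi$ is $\mathsf{ACA}_0$-provably equivalent to $\neg\mathsf{WF}(\prec_{\varphi})$, where $\prec_{\varphi}$ is the primitive recursive Kleene--Brouwer ordering of $S_{\varphi}$. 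Since $\varphi$ is true, $\prec_{\varphi}$ is ill-founded. By the deduction theorem $T \vdash \varphi \to \mathsf{WF}(\prec)$, hence $T \vdash \neg\mathsf{WF}(\prec_{\varphi}) \to \mathsf{WF}(\prec)$, i.e. $T \vdash \mathsf{WF}(\prec_{\varphi}) \vee \mathsf{WF}(\prec)$.

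Second I would turn this disjunction into a single well-foundedness claim by a product-of-descent-trees construction. Writing $\mathrm{Desc}(R)$ for the tree of finite strictly $R$-descending sequences, let $W$ be the primitive recursive tree whose level-$n$ nodes are the pairs $(\sigma, \tau)$ with $\sigma \in \mathrm{Desc}(\prec_{\varphi})$, $\tau \in \mathrm{Desc}(\prec)$ and $|\sigma| = |\tau| = n$ (pairs coded into $\omega$). An infinite branch through $W$ projects to infinite branches through both factors, and conversely, so $\mathsf{ACA}_0$ proves that $W$ is well-founded iff $\mathsf{WF}(\prec_{\varphi}) \vee \mathsf{WF}(\prec)$; and taking $\prec^{*}$ to be the Kleene--Brouwer ordering of $W$ with respect to the natural order on codes (which is provably well-founded), $\mathsf{ACA}_0$ proves that $W$ is well-founded iff $\mathsf{WF}(\prec^{*})$. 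Combining, $T \vdash \mathsf{WF}(\prec^{*})$. Moreover $\prec^{*}$ really is a well-ordering: $\prec$ is, so $\mathrm{Desc}(\prec)$ is well-founded, so $W$ has no infinite branch (any branch would project to one through $\mathrm{Desc}(\prec)$), so its Kleene--Brouwer ordering is a well-ordering. Hence $|\prec^{*}| \le |T|_{\mathsf{WF}}$.

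Third I would verify $|\prec^{*}| \ge |\prec|$. Since $\prec_{\varphi}$ is ill-founded, fix an infinite $\prec_{\varphi}$-descending sequence; along the corresponding branch of $\mathrm{Desc}(\prec_{\varphi})$ the tree $W$ restricts to an isomorphic copy of $\mathrm{Desc}(\prec)$ (sitting inside $W$ as a subtree containing the root), so $\mathrm{rank}(W) \ge \mathrm{rank}(\mathrm{Desc}(\prec))$. A transfinite induction along $\prec$ shows $\mathrm{rank}(\mathrm{Desc}(\prec)) \ge |\prec|$, and the Kleene--Brouwer order type of a well-founded tree is at least its rank, so $|\prec^{*}| \ge \mathrm{rank}(W) \ge |\prec|$. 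Taking the supremum over all primitive recursive well-orderings $\prec$ with $T+\varphi \vdash \mathsf{WF}(\prec)$ gives $|T+\varphi|_{\mathsf{WF}} \le |T|_{\mathsf{WF}}$, as desired.

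The routine ingredients are the $\mathsf{ACA}_0$-formalizations of the tree lemmas (Kleene normal form; equivalence of well-foundedness of a tree on $\omega$ with that of its Kleene--Brouwer ordering — this is the only place $T \supseteq \mathsf{ACA}_0$ is used); these are standard. The step I expect to be the genuine obstacle is the order-type estimate: one must ensure that $\prec^{*}$ is not merely well-founded but at least as long as $\prec$. A naive ordinal operation on $\prec_{\varphi}$ and $\prec$ (sum or product) fails precisely because $\prec_{\varphi}$ is ill-founded, which is what forces the detour through descent trees and the comparison of the rank of a well-founded tree with the order type of its Kleene--Brouwer ordering.
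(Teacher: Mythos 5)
Your proof is correct, and note that for this particular statement the paper offers no proof of its own: Theorem \ref{kreisel} is quoted with citations to \cite{pohlers2008proof} (Theorem 6.7.5) and \cite{rathjen1999realm} (Proposition 2.24), and only its extension to $\Sigma^1_1$-definable theories (Proposition \ref{prop}) is proved, by reduction to the cited result. Your opening move---replacing the true $\Sigma^1_1$ sentence $\varphi$ by ill-foundedness of a primitive recursive order, provably in $\mathsf{ACA}_0$---is exactly the paper's Theorem \ref{kleene-brouwer} applied to $\neg\varphi$, and the rest is sound: from $T\vdash \mathsf{WF}(\prec_\varphi)\vee\mathsf{WF}(\prec)$ the product of descent trees is $T$-provably branchless (the branch-versus-Kleene--Brouwer equivalence being available in $\mathsf{ACA}_0$), it is genuinely branchless because $\prec$ is a real well-ordering, and the chain $\mathsf{otyp}(\prec^{*})\geq \mathrm{rank}(W)\geq \mathrm{rank}(\mathrm{Desc}(\prec))\geq \mathsf{otyp}(\prec)$---using a true descending sequence through $\prec_\varphi$, monotonicity of rank under subtrees, and the standard fact that the Kleene--Brouwer order type of a well-founded tree dominates its rank---delivers the length estimate you rightly flagged as the crux. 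The cited proofs get that estimate a bit more directly, by ordering pairs (node of the tree for $\varphi$, element of the field of $\prec$) quasi-lexicographically so that a true branch stacks copies of $\prec$; your detour through descent trees and ranks costs a few extra standard lemmas but is the same underlying idea. One harmless observation: your argument never invokes the $\Pi^1_1$-soundness of $T$, which is fine here because $|T|_{\mathsf{WF}}$ is defined as a supremum only over presentations that are genuinely well-ordered.
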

A version of Kreisel's theorem also holds for $\Sigma^1_1$-definable $\Pi^1_1$-sound extensions of $\mathsf{ACA}_0$. Thus, the following claims hold for all $\Sigma^1_1$-definable and $\Pi^1_1$-sound $T$ and $U$ extending $\mathsf{ACA}_0$:
\begin{enumerate}
    \item If $T\equiv_{\Pi^1_1}U$, then $|T|_{\mathsf{WF}}=|U|_{\mathsf{WF}}$.
    \item If $V$ consists of true $\Sigma^1_1$ sentences, then $|T|_{\mathsf{WF}}=|T+V|_{\mathsf{WF}}$.
\end{enumerate}

Let's say that an equivalence relation $\equiv$ on theories is \emph{good} if it has both those properties.

\begin{definition}\label{good}
An equivalence relation $\equiv$ is \emph{good} if for all $\Sigma^1_1$-definable and $\Pi^1_1$-sound $T$ and $U$ extending $\mathsf{ACA}_0$:
\begin{enumerate}
    \item If $T\equiv_{\Pi^1_1}U$, then $T\equiv U$.
    \item If $V$ consists of true $\Sigma^1_1$ sentences, then $T\equiv T+V$.
\end{enumerate}
\end{definition}

Our first theorem is that no good partition makes a single distinction that the ordinal analysis partition does not make.
\begin{theorem}\label{main-thm}
Let $\equiv$ be good. Let $T$ and $U$ be $\Sigma^1_1$-definable and $\Pi^1_1$-sound extensions of $\mathsf{ACA}_0$ such that $|T|_{\mathsf{WF}}=|U|_{\mathsf{WF}}$. Then $T\equiv U$.
\end{theorem}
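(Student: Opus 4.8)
The plan is to show that if $|T|_{\mathsf{WF}} = |U|_{\mathsf{WF}}$, then $T$ and $U$ can be connected by a chain of theories along which any good $\equiv$ is forced to be constant. The key idea is that the proof-theoretic ordinal is both (a) computable from the $\Pi^1_1$-consequences of a theory and (b) ``achievable'' by adding a true $\Sigma^1_1$ sentence; combining these gives a normal form for each theory up to $\equiv$. So first I would fix a common ordinal $\alpha = |T|_{\mathsf{WF}} = |U|_{\mathsf{WF}}$ and aim to define, for a given $\Sigma^1_1$-definable $\Pi^1_1$-sound $T \supseteq \mathsf{ACA}_0$, a single ``canonical'' theory $T^\star$ with $T^\star \equiv T$ for every good $\equiv$, such that $T^\star$ depends only on $\alpha$. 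Then $T \equiv T^\star = U^\star \equiv U$ and we are done.

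The construction of $T^\star$ is the heart of the argument. The natural candidate is something like $\mathsf{ACA}_0 + \{\mathsf{WF}(\prec) : \prec \text{ recursive well-ordering of type} < \alpha\}$, or perhaps $\mathsf{ACA}_0$ together with a single $\Pi^1_1$ sentence axiomatizing transfinite induction up to (a notation system for) $\alpha$. To get from $T$ to $T^\star$ I would argue in two moves. \emph{Upward:} since $|T|_{\mathsf{WF}} = \alpha$, for each $\beta < \alpha$ there is a recursive presentation $\prec_\beta$ of order type $\beta$ with $T \vdash \mathsf{WF}(\prec_\beta)$; one wants $T \equiv_{\Pi^1_1}$-comparable, or at least $\equiv$-comparable, to a theory that explicitly contains these $\Pi^1_1$ consequences. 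Here I would use condition (1): passing to the $\Pi^1_1$-fragment $T_{\Pi^1_1}$ (the set of $\Pi^1_1$ consequences of $T$, which is again $\Sigma^1_1$-definable and $\Pi^1_1$-sound) does not change $\equiv$-class, and $T_{\Pi^1_1}$ already ``knows'' exactly the well-foundedness statements $T$ proves. \emph{Downward:} to push $T_{\Pi^1_1}$ up to the full canonical theory, I would exploit that the missing well-foundedness facts, once restricted appropriately, become provable after adding a \emph{true $\Sigma^1_1$ sentence} — the standard trick behind Kreisel's theorem is that $\mathsf{WF}(\prec)$ for a genuinely well-founded recursive $\prec$ follows from a $\Sigma^1_1$ truth (e.g. an assertion that a certain recursive function is a descending-sequence-bounding rank function, or the existence of a suitable hierarchy), and such additions are $\equiv$-invariant by condition (2).

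Concretely, I expect the cleanest route is: let $W_\alpha$ be a fixed canonical $\Pi^1_1$ sentence expressing well-foundedness of a standard notation system for $\alpha$ (or rather $\sup$ of a cofinal recursive sequence below $\alpha$ if $\alpha$ is not itself a notation), and show both $T \equiv \mathsf{ACA}_0 + W_{<\alpha}$ where $W_{<\alpha}$ abbreviates the relevant set of $\Pi^1_1$ sentences. For the inclusion $\mathsf{ACA}_0 + W_{<\alpha} \subseteq_{\Pi^1_1}$-or-$\equiv$ side one uses that $T$ proves each of these (so $T \equiv_{\Pi^1_1} T + W_{<\alpha}$, hence $\equiv$); for the reverse one uses that the extra $\Pi^1_1$ theorems of $T$ beyond $W_{<\alpha}$ are all ``witnessed'' by $\Sigma^1_1$ truths, so adding those truths to $\mathsf{ACA}_0 + W_{<\alpha}$ recovers (a $\Pi^1_1$-equivalent of) $T$, and condition (2) plus condition (1) close the loop. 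The main obstacle — and the step I would spend the most care on — is this last point: showing that the $\Pi^1_1$ theorems of $T$ are, modulo $\mathsf{ACA}_0 + W_{<\alpha}$, consequences of true $\Sigma^1_1$ sentences. This is essentially a uniform/relativized version of the proof of Kreisel's theorem, and getting it to work for $\Sigma^1_1$-definable (not just recursively axiomatized) theories is where the ``version of Kreisel's theorem'' alluded to in the excerpt must be invoked or reproved; I would isolate it as a lemma stating that every $\Pi^1_1$-sound $\Sigma^1_1$-definable $T$ is $\equiv$-equivalent to $\mathsf{ACA}_0 + \mathsf{WF}(\prec_T)$ for a single recursive linear order $\prec_T$ of order type $|T|_{\mathsf{WF}}$, after which Theorem \ref{main-thm} is immediate because order type $|T|_{\mathsf{WF}} = |U|_{\mathsf{WF}}$ makes $\prec_T$ and $\prec_U$ mutually provably well-founded over a true $\Sigma^1_1$ sentence.
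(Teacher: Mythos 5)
Your high-level plan (pass to $\Pi^1_1$ consequences via condition (1), absorb the remaining differences by adding true $\Sigma^1_1$ sentences via condition (2), with Kreisel's theorem in the background) uses the same ingredients as the paper, but the step you yourself flag as the main obstacle is left unproved, and the concrete lemma you propose to discharge it is false. You suggest showing that every $\Pi^1_1$-sound, $\Sigma^1_1$-definable $T\supseteq\mathsf{ACA}_0$ satisfies $T\equiv \mathsf{ACA}_0+\mathsf{WF}(\prec_T)$ for a \emph{single} primitive recursive $\prec_T$ with $\mathsf{otyp}(\prec_T)=|T|_{\mathsf{WF}}$. No good equivalence relation can identify these two theories: the ordinal analysis partition itself is good (condition (1) is immediate and condition (2) is Proposition \ref{prop}), and $|\mathsf{ACA}_0+\mathsf{WF}(\prec_T)|_{\mathsf{WF}}>\mathsf{otyp}(\prec_T)=|T|_{\mathsf{WF}}$, since over $\mathsf{ACA}_0$ the well-foundedness of $\prec_T$ yields well-foundedness of primitive recursive orderings of type $\mathsf{otyp}(\prec_T)+1$, $\omega^{\prec_T}$, etc. The point is that $|T|_{\mathsf{WF}}$ is a supremum that $T$ never attains, and by Kreisel it is not attained after adding true $\Sigma^1_1$ sentences either; so a single-sentence normal form $\mathsf{WF}(\prec_T)$ at the ordinal $|T|_{\mathsf{WF}}$ necessarily sits in a strictly higher cell of the ordinal analysis partition than $T$. (The hedge about $\alpha$ ``not itself being a notation'' does not address this; the problem is attainment, not notation.)

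The workable version of your canonical-theory idea is the schema $T^\star_\alpha=\mathsf{ACA}_0+\{\mathsf{WF}(\prec):\prec\text{ primitive recursive},\ \mathsf{otyp}(\prec)<\alpha\}$, and making it work requires exactly the missing pieces: (i) Theorem \ref{kleene-brouwer}, so that every $\Pi^1_1$ theorem $\varphi$ of $T$ is $\mathsf{ACA}_0$-equivalent to $\mathsf{WF}(a_\varphi)$ with $\mathsf{otyp}(a_\varphi)<\alpha$ (here $\Pi^1_1$-soundness gives that $a_\varphi$ is genuinely well-founded), hence $\varphi$ is already provable in $T^\star_\alpha$ with no oracle needed in that direction; (ii) in the other direction, adding to $T$ the true $\Sigma^1_1$ embedding sentences $\exists f\,\mathsf{Emb}(f,\prec,d)$, where $T\vdash\mathsf{WF}(d)$ and $\mathsf{otyp}(\prec)\le\mathsf{otyp}(d)$, to recover the axioms of $T^\star_\alpha$; and (iii) verifying that every intermediate theory in the chain is a $\Sigma^1_1$-definable, $\Pi^1_1$-sound extension of $\mathsf{ACA}_0$ (e.g.\ $T_{\Pi^1_1}$ alone does not extend $\mathsf{ACA}_0$, and the set of added embedding sentences must be chosen by a $\Sigma^1_1$ condition), since conditions (1) and (2) apply only to such theories; Lemma \ref{lemma} handles soundness. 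With (i)--(iii) one gets a correct chain $T\equiv T+E\equiv_{\Pi^1_1} T^\star_\alpha+E\equiv T^\star_\alpha$, which is essentially the paper's argument in a lightly different packaging: the paper dispenses with the canonical middle theory and instead augments $T$ and $U$ symmetrically with the \emph{same} true $\Sigma^1_1$ embedding sentences, checks $\Sigma^1_1$-definability and $\Pi^1_1$-soundness of the augmented theories $\widehat{T}$, $\widehat{U}$, shows $\widehat{T}\equiv_{\Pi^1_1}\widehat{U}$, and concludes $T\equiv\widehat{T}\equiv\widehat{U}\equiv U$.
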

This theorem yields an abstract characterization of ordinal analysis. Indeed, the ordinal analysis partition is the finest good partition.


\subsection{The Ordinal Analysis Ordering}
Ordinal analysis also induces an ordering on theories where $T\leq U$ if $|T|_{\mathsf{WF}}\leq|U|_{\mathsf{WF}}$. We will also characterize this ordering in abstract terms. To motivate this characterization, let's briefly put ordinal analysis to the side and focus on the question: What is the structure of axiomatic theories ordered by relative logical strength? To answer this question we must clarify what we mean by ``logical strength.'' There are many notions of logical strength in the literature, but these notions typically coincide when restricted to ``naturally occurring'' axiomatic theories. Nevertheless, it is possible to concoct axiomatic theories in an \emph{ad hoc} fashion so that these notions of logical strength come apart. That is, the different notions of logical strength coincide on natural theories but come apart as means of comparing axiomatic theories in general. Perhaps the most well-known notion for comparing the logical strength of theories is consistency strength over a suitable base theory:


\begin{definition}
$T\leq_{\mathsf{Con}} U \defiff \mathsf{ACA}_0 \vdash \mathsf{Con}(U) \to \mathsf{Con}(T).$
\end{definition}

It is convenient for us to use $\mathsf{ACA}_0$ as our base theory. One sometimes sees different choices for the base theory; common choices include $\mathsf{EA}$, $\mathsf{PRA}$, and $\mathsf{PA}$. Note that since $\mathsf{ACA}_0$ is conservative over $\mathsf{PA}$, consistency strength over $\mathsf{ACA}_0$ and consistency strength over $\mathsf{PA}$ are actually the same. 

Another common way to compare the strength of theories is to compare their $\Pi^0_1$ consequences. 

\begin{definition}
$T\subseteq_{\Pi^0_1} U \defiff$  for every $\varphi\in\Pi^0_1$, if $T\vdash \varphi$ then $U\vdash \varphi$.
\end{definition}

It is often claimed that, when we restrict our attention to ``natural'' theories, the $\subseteq_{\Pi^0_1}$ ordering coincides with relative consistency strength \cite{steel2014godel}. However, these notions do not coincide in general.\footnote{For a counter-example, consider any consistent $T$ and let $R_T$ be the Rosser sentence for $T$. Then $T+R_T\not\subseteq_{\Pi^0_1} T$ but $\mathsf{ACA}_0\vdash \mathsf{Con}(T) \to \mathsf{Con}(T+R_T)$, i.e., $T+R_T \leq_{\mathsf{Con}}T$.}

The orderings $\leq_{\mathsf{Con}}$ and $\subseteq_{\Pi^0_1}$ are neither pre-linear nor pre-well-founded. That is, in both orderings, there are incomparable elements and infinite descending sequences. Remarkably, both of these features disappear when we restrict our attention to the natural theories. The restriction of $\leq_{\mathsf{Con}}$ to natural theories coincides with the restriction of $\subseteq_{\Pi^0_1}$ to natural theories, and these restrictions engender a pre-well-ordering.\footnote{At least, this seems to be the majority opinion; see, for instance, discussions in \cite{montalban2019martin, shelah2003logical, steel2014godel}. There have been dissenting voices, however; see  \cite{hamkins2022nonlinearity, hauser2014strong}.}

If it is true that natural axiomatic theories are pre-well-ordered by logical strength---that is, if it is not merely an illusion engendered by a paucity of examples---then one might like to prove that it is true. However, without a precise mathematical definition of the “natural'' axiomatic theories, it is not clear how to prove this claim. It is not even clear how to state it mathematically.

As part of our characterization of the ordinal analysis ordering, we will introduce analogues of the aforementioned consistency strength and $\Pi^0_1$ theorem inclusion orderings that are \emph{actually} pre-well-ordered. To do this, we will tweak these orderings in two ways.

First, we will replace the notion of \emph{provability} with the notion of \emph{provability in the presence of an oracle for $\Sigma^1_1$ truths.} A theory $T$ will prove a sentence $\varphi$ in the presence of such an oracle if $T+\psi$ proves $\varphi$ for some true $\Sigma^1_1$ $\psi$. We introduce the following notation to capture this idea:

\begin{definition}
For a complexity class $\Gamma$, we define $T\vdash^{\Sigma^1_1}\varphi \defiff $ there is a true $\psi\in\Gamma$ such that $T+\psi\vdash\varphi$.
\end{definition}

Second, we focus our attention on the $\Pi^1_1$ consequences of theories rather than the $\Pi^0_1$ consequences of theories. This shift in perspective yields the following analogue of $\subseteq_{\Pi^0_1}$:

\begin{definition}
$T\subseteq^{\Sigma^1_1}_{\Pi^1_1} U \defiff \text{ for all $\varphi \in \Pi^1_1$, if $T\vdash ^{\Sigma^1_1} \varphi$ then  $U\vdash^{\Sigma^1_1}  \varphi$.}$
\end{definition}
The only theories we will consider are extensions of $\mathsf{ACA}_0$; hence $T\vdash\varphi$ is equivalent to $T\vdash^{\Sigma^0_1}\varphi$.\footnote{This is because $\mathsf{ACA}_0$ proves every true $\Sigma^0_1$ sentence. Indeed, comparably weak subsystems of $\mathsf{ACA}_0$ are also $\Sigma^0_1$-complete.} Thus, $T\subseteq_{\Pi^0_1}U$ is equivalent to $T\subseteq^{\Sigma^0_1}_{\Pi^0_1}U$. Note that $\subseteq^{\Sigma^1_1}_{\Pi^1_1}$ is just the result of changing $\subseteq^{\Sigma^0_1}_{\Pi^0_1}$ by replacing $\Sigma^0_1$  with $\Sigma^1_1$ and $\Pi^0_1$ with $\Pi^1_1$.

Before stating the analogue of $\leq_{\mathsf{Con}}$, let's note that a theory is consistent just in case all of its $\Pi^0_1$ consequences are true. This means that $T\leq_{\mathsf{Con}}U$ is equivalent to the following claim:
$$\mathsf{ACA}_0 \vdash^{\Sigma^0_1} \mathsf{RFN}_{\Pi^0_1}(U) \to \mathsf{RFN}_{\Pi^0_1}(T),$$
where $\mathsf{RFN}_{\Pi^0_1}(T)$ is a formula expressing that all of $T$'s $\Pi^0_1$ consequences are true.

We will be interested in $\Pi^1_1$-soundness, where a theory is $\Pi^1_1$-sound just in case all its $\Pi^1_1$ consequences are true. We can formalize the $\Pi^1_1$-soundness of $T$ with a single sentence in $\mathsf{ACA}_0$:
$$\mathsf{RFN}_{\Pi^1_1}(T) : =\forall \varphi \in \Pi^1_1 \big( \mathsf{Pr}_T(\varphi) \to \mathsf{True}_{\Pi^1_1}(\varphi)  \big).$$
Note that $\mathsf{Pr}_T$ here picks out ordinary provability from $T$, not provability in the presence of an oracle. Hence, we have the following analogue of $\leq_{\mathsf{Con}}$:

\begin{definition}
$T\leq^{\Sigma^1_1}_{\mathsf{RFN_{\Pi^1_1}}} U \defiff \mathsf{ACA}_0 \vdash^{\Sigma^1_1} \mathsf{RFN}_{\Pi^1_1}(U) \to \mathsf{RFN}_{\Pi^1_1}(T).$
\end{definition}

Recall that, according to conventional wisdom, calculating the proof-theoretic ordinal of a theory is a means of measuring its logical strength. However, note that the ordering of theories induced by ordinal analysis:
$$T\leq_{\mathsf{WF} }U \defiff |T|_{\mathsf{WF}}\leq |U|_{\mathsf{WF}}$$
is a pre-well-ordering since the ordinals are well-ordered. Hence, $\leq_{\mathsf{WF} }$ cannot strictly coincide with either $\leq_{\mathsf{Con}}$ or $\subseteq_{\Pi^0_1}$.

Nevertheless, in the presence of an oracle for $\Sigma^1_1$ truths, we can vindicate the common wisdom that ordinal analysis is a means of measuring the logical strength of theories. Indeed, our second main theorem in this paper is the following:
\begin{theorem}\label{main-intro}
For $\Pi^1_1$-sound arithmetically definable $T$ and $U$ extending $\mathsf{ACA}_0:$
$$T\subseteq_{\Pi^1_1}^{\Sigma^1_1}U \Longleftrightarrow T\leq_{\mathsf{WF}} U  \Longleftrightarrow T\leq^{\Sigma^1_1}_{\mathsf{RFN_{\Pi^1_1}}} U.$$
\end{theorem}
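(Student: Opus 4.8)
The proof splits into a cycle of three implications. The anchor is a characterization of $|T|_{\mathsf{WF}}$ in terms of provable $\Pi^1_1$ sentences, so I would first establish the middle equivalences via the equality $|T|_{\mathsf{WF}}=|U|_{\mathsf{WF}}$. The key tool is a well-ordering-indexed hierarchy of $\Pi^1_1$ sentences: for each recursive ordinal $\alpha$ one has a canonical $\Pi^1_1$ truth $\tau_\alpha$ (e.g.\ $\mathsf{WF}(\prec)$ for a fixed presentation of $\alpha$, or better an iterated-reflection statement like $\mathsf{RFN}_{\Pi^1_1}^\alpha(\mathsf{ACA}_0)$) whose provability tracks the proof-theoretic ordinal. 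I would state and use the known fact that, for $\Sigma^1_1$-definable $\Pi^1_1$-sound $T\supseteq\mathsf{ACA}_0$, $|T|_{\mathsf{WF}}$ equals the supremum of $\alpha$ such that $T\vdash\tau_\alpha$, so that $|T|_{\mathsf{WF}}\le|U|_{\mathsf{WF}}$ is equivalent to: every such $\tau_\alpha$ provable in $T$ is provable in $U$. This immediately packages into the $\subseteq_{\Pi^1_1}^{\Sigma^1_1}$ direction once we account for the oracle.

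**The implication $T\leq_{\mathsf{WF}} U \Rightarrow T\subseteq_{\Pi^1_1}^{\Sigma^1_1}U$.** Suppose $|T|_{\mathsf{WF}}\le|U|_{\mathsf{WF}}$ and $T\vdash^{\Sigma^1_1}\varphi$ with $\varphi\in\Pi^1_1$, say $T+\psi\vdash\varphi$ for true $\Sigma^1_1$ $\psi$. By Kreisel's theorem (its $\Sigma^1_1$-definable version, quoted in the excerpt), $|T+\psi|_{\mathsf{WF}}=|T|_{\mathsf{WF}}\le|U|_{\mathsf{WF}}$. The heart of the matter is a conservativity/overspill step: since $\varphi$ is $\Pi^1_1$ and provable from a $\Pi^1_1$-sound theory with proof-theoretic ordinal $\le|U|_{\mathsf{WF}}$, $\varphi$ is "provable from true $\Sigma^1_1$ data plus a well-foundedness statement of order type $<|U|_{\mathsf{WF}}$," hence provable from $U$ together with a true $\Sigma^1_1$ sentence. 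Concretely I would invoke a proof-theoretic normal-form result: every $\Pi^1_1$ theorem of $T+\psi$ is a consequence of $\mathsf{ACA}_0$ plus $\psi$ plus $\mathsf{WF}(\prec)$ for some $\prec$ of order type $<|T+\psi|_{\mathsf{WF}}$, and $U$ proves $\mathsf{WF}(\prec')$ for cofinally many presentations below $|U|_{\mathsf{WF}}$; choosing $\prec'$ of larger order type and absorbing the rest into a true $\Sigma^1_1$ oracle sentence gives $U\vdash^{\Sigma^1_1}\varphi$. This is the step I expect to be the main obstacle, since it requires the precise form of the "$\Pi^1_1$-ordinal analysis" theorem that $\Pi^1_1$ consequences are exactly those provable from transfinite iterations of a fixed principle up to $|T|_{\mathsf{WF}}$; I would cite Pohlers/Rathjen for the recursively-axiomatized case and then lift to $\Sigma^1_1$-definable $T$ using a reflexive-induction or compactness argument over the $\Sigma^1_1$ definition.

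**The implication $T\subseteq_{\Pi^1_1}^{\Sigma^1_1}U \Rightarrow T\leq^{\Sigma^1_1}_{\mathsf{RFN}_{\Pi^1_1}} U$.** Here I would use that the $\Pi^1_1$-reflection statement behaves like an infinite conjunction over $\Pi^1_1$ truths together with provability. First note $\mathsf{ACA}_0\vdash^{\Sigma^1_1}\mathsf{RFN}_{\Pi^1_1}(U)$ fails in general, so one cannot argue trivially; instead, argue contrapositively or semantically. Since both theories are $\Pi^1_1$-sound, $\mathsf{RFN}_{\Pi^1_1}(T)$ and $\mathsf{RFN}_{\Pi^1_1}(U)$ are true, and the claim $\mathsf{ACA}_0\vdash^{\Sigma^1_1}\mathsf{RFN}_{\Pi^1_1}(U)\to\mathsf{RFN}_{\Pi^1_1}(T)$ amounts to: from true $\Sigma^1_1$ data plus "all $\Pi^1_1$ theorems of $U$ are true" one can derive "all $\Pi^1_1$ theorems of $T$ are true." Given $T\subseteq_{\Pi^1_1}^{\Sigma^1_1}U$, for any $\varphi\in\Pi^1_1$ with $\mathsf{Pr}_T(\varphi)$ there is (provably, using a true $\Sigma^1_1$ witness encoding the relevant instance of the inclusion) a true $\Sigma^1_1$ $\psi$ and a $U$-proof of $\varphi$ from $\psi$; then $\mathsf{RFN}_{\Pi^1_1}(U)$ together with truth of $\psi$ — both available as $\Sigma^1_1$ oracle inputs or consequences thereof — yields $\mathsf{True}_{\Pi^1_1}(\varphi)$. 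Formalizing "there is a true $\Sigma^1_1$ witness for each instance" uniformly is the delicate point; I would handle it by taking the single $\Sigma^1_1$ sentence asserting $\Sigma^1_1$-truth of the (arithmetically definable, hence $\Sigma^1_1$) set of pairs $\langle\varphi,\psi\rangle$ coding the reduction, and check $\mathsf{ACA}_0$ proves the implication relative to that sentence.

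**Closing the cycle: $T\leq^{\Sigma^1_1}_{\mathsf{RFN}_{\Pi^1_1}} U \Rightarrow T\leq_{\mathsf{WF}} U$.** Finally, I would show that $\mathsf{RFN}_{\Pi^1_1}(T)$ provably implies $\mathsf{WF}(\prec)$ for every primitive recursive $\prec$ with $T\vdash\mathsf{WF}(\prec)$ — indeed $\mathsf{RFN}_{\Pi^1_1}(T)\to(\mathsf{Pr}_T(\ulcorner\mathsf{WF}(\prec)\urcorner)\to\mathsf{WF}(\prec))$ is immediate since $\mathsf{WF}(\prec)\in\Pi^1_1$. So if $\mathsf{ACA}_0\vdash^{\Sigma^1_1}\mathsf{RFN}_{\Pi^1_1}(U)\to\mathsf{RFN}_{\Pi^1_1}(T)$, then for each $\prec$ with $T\vdash\mathsf{WF}(\prec)$ we get $\mathsf{ACA}_0\vdash^{\Sigma^1_1}\mathsf{RFN}_{\Pi^1_1}(U)\to\mathsf{WF}(\prec)$; combined with the fact that $U$ proves $\mathsf{RFN}_{\Pi^1_1}$ of approximations of itself and with $\Pi^1_1$-soundness one extracts, via Kreisel's theorem applied to the $\Sigma^1_1$ oracle sentence, that some $U$-provable well-ordering dominates $\prec$, giving $\mathrm{ot}(\prec)\le|U|_{\mathsf{WF}}$ and hence $|T|_{\mathsf{WF}}\le|U|_{\mathsf{WF}}$. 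Throughout, the recurring technical theme — and the place where care is needed — is converting "$\vdash^{\Sigma^1_1}$" statements into honest derivations by absorbing the oracle into a single true $\Sigma^1_1$ sentence and then applying Kreisel's theorem to see that this does not change proof-theoretic ordinals; once that move is set up cleanly, each implication reduces to the standard equivalence between $|T|_{\mathsf{WF}}$ and the sup of $T$-provable well-orderings.
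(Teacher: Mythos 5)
Your first implication ($T\leq_{\mathsf{WF}}U\Rightarrow T\subseteq^{\Sigma^1_1}_{\Pi^1_1}U$) is essentially the paper's argument (Lemma \ref{easy}): the ``normal form'' you worry about is not a deep Pohlers/Rathjen-style theorem but just Theorem \ref{kleene-brouwer} ($\psi\to\varphi$ is $\Pi^1_1$, hence $\mathsf{ACA}_0$-equivalent to $\mathsf{WF}(\alpha)$ for some primitive recursive $\alpha$ with $T\vdash\mathsf{WF}(\alpha)$), plus the true $\Sigma^1_1$ embedding sentence; so that step is fine and elementary. Your second implication has the right shape but a genuine gap at exactly the point you flag: the oracle you propose --- a single sentence asserting the truth of all the $\Sigma^1_1$ witnesses $\psi$ paired with the $\Pi^1_1$ theorems $\varphi$ of $T$ --- has the form $\forall n\,\exists X(\ldots)$, i.e.\ a universally quantified $\Sigma^1_1$-truth claim. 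That is only \emph{essentially} $\Sigma^1_1$, not $\Sigma^1_1$, so it is not admissible as an oracle for $\vdash^{\Sigma^1_1}$ as it stands. The missing move is the paper's Lemma \ref{essentially} (via $\Sigma^1_1\text{-}\mathsf{AC}_0$ and Barwise--Schlipf): replace the essentially $\Sigma^1_1$ sentence $\theta$ by a genuine $\Sigma^1_1$ sentence $\eta$ with $\mathsf{ACA}_0\vdash\eta\to\theta$ and $\Sigma^1_1\text{-}\mathsf{AC}_0\vdash\theta\to\eta$, so that $\eta$ is true. Note also that this is precisely where arithmetical definability of $T$ enters ($\mathsf{Pr}_T$ occurs negatively in $\theta$); your proof never uses arithmetical definability anywhere, and by Proposition \ref{negative-prop} the theorem is false for merely $\Sigma^1_1$-definable theories, so an argument that ignores this hypothesis cannot be complete.

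The serious breakdown is in your closing implication $T\leq^{\Sigma^1_1}_{\mathsf{RFN}_{\Pi^1_1}}U\Rightarrow T\leq_{\mathsf{WF}}U$. The ``fact'' that $U$ proves $\mathsf{RFN}_{\Pi^1_1}$ of approximations of itself is false: already $U=\mathsf{ACA}_0$ proves no such reflection statement (Theorem \ref{incompleteness}, or G\"odel's second incompleteness theorem). Nor can Kreisel's theorem absorb $\mathsf{RFN}_{\Pi^1_1}(U)$ into the oracle: it is a true $\Pi^1_1$ sentence, not $\Sigma^1_1$, and in general $|\mathsf{ACA}_0+\mathsf{RFN}_{\Pi^1_1}(U)|_{\mathsf{WF}}>|U|_{\mathsf{WF}}$ (e.g.\ $\mathsf{ACA}_0+\mathsf{RFN}_{\Pi^1_1}(\mathsf{ACA}_0)$ proves well-orderings beyond $\varepsilon_0$). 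Hence from $\mathsf{ACA}_0\vdash^{\Sigma^1_1}\mathsf{RFN}_{\Pi^1_1}(U)\to\mathsf{WF}(\prec)$ you cannot conclude that some $U$-provable well-ordering dominates $\prec$, and your extraction step fails. The paper instead argues by contradiction (right-to-left of Lemma \ref{hard}): if $|U|_{\mathsf{WF}}<|T|_{\mathsf{WF}}$, fix a primitive recursive $\alpha$ with $T\vdash\mathsf{WF}(\alpha)$ and $\mathsf{otyp}(\alpha)\geq|U|_{\mathsf{WF}}$, and let $F$ be the true $\Sigma^1_1$ sentence asserting that a single set codes embeddings of every $U$-provably well-founded primitive recursive order into $\alpha$; then $T+F\vdash\mathsf{RFN}_{\Pi^1_1}(U)$ (using Lemma \ref{pwf}), and combining this with the assumed oracle implication produces a theory axiomatized by true sentences that proves its own $\Pi^1_1$-reflection, contradicting the second incompleteness theorem. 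Some incompleteness-theoretic argument of this kind is needed to close your cycle; without it the third implication is unproved.
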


Since the ordinals are well-ordered, this immediately yields the following corollary:
\begin{corollary}\label{cor-intro}
The relations $\subseteq_{\Pi^1_1}^{\Sigma^1_1}$ and $\leq^{\Sigma^1_1}_{\mathsf{RFN}_{\Pi^1_1}}$ pre-well-order the $\Pi^1_1$-sound arithmetically definable extensions of $\mathsf{ACA}_0$.
\end{corollary}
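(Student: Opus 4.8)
Since $\leq_{\mathsf{WF}}$ is the pullback of the well-ordering of the ordinals along the map $T\mapsto|T|_{\mathsf{WF}}$, it is a pre-well-order, so Corollary~\ref{cor-intro} is immediate once Theorem~\ref{main-intro} is in hand; I sketch a proof of that theorem. Abbreviate the three clauses as $(1)$ ($T\subseteq_{\Pi^1_1}^{\Sigma^1_1}U$), $(2)$ ($T\leq_{\mathsf{WF}}U$), $(3)$ ($T\leq^{\Sigma^1_1}_{\mathsf{RFN}_{\Pi^1_1}}U$). The plan is to prove $(1)\Leftrightarrow(2)$ and $(2)\Leftrightarrow(3)$ using two lemmas. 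The \emph{well-foundedness lemma}: for $\Pi^1_1$-sound $\Sigma^1_1$-definable $\Theta$ extending $\mathsf{ACA}_0$ and any primitive recursive linear order $\prec$, one has $\Theta\vdash^{\Sigma^1_1}\mathsf{WF}(\prec)$ if and only if $\prec$ is a well-order and $|\prec|<|\Theta|_{\mathsf{WF}}$. The \emph{normal form lemma}: for every $\Pi^1_1$-sound arithmetically definable $T$ extending $\mathsf{ACA}_0$ there is a primitive recursive well-order $\prec_T$ with $|\prec_T|=|T|_{\mathsf{WF}}$ such that $\mathsf{ACA}_0\vdash^{\Sigma^1_1}\mathsf{RFN}_{\Pi^1_1}(T)\leftrightarrow\mathsf{WF}(\prec_T)$. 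I also use the formalized Kleene normal form theorem (over $\mathsf{ACA}_0$, every $\Pi^1_1$ sentence $\chi$ is provably equivalent to $\mathsf{WF}(\prec_\chi)$ for a primitive recursive $\prec_\chi$, uniformly in $\chi$) and Kreisel's theorem in its $\Sigma^1_1$-definable form.

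For the well-foundedness lemma: $\Leftarrow$ holds because when $|\prec|<|\Theta|_{\mathsf{WF}}$, $\prec$ order-embeds onto an initial segment of some primitive recursive $\prec'$ with $\Theta\vdash\mathsf{WF}(\prec')$, and the existence of such an embedding is a true $\Sigma^1_1$ fact from which $\mathsf{ACA}_0$ proves $\mathsf{WF}(\prec')\to\mathsf{WF}(\prec)$; $\Rightarrow$: if $\Theta+\psi\vdash\mathsf{WF}(\prec)$ with $\psi$ true $\Sigma^1_1$, then $\prec$ is a well-order by $\Pi^1_1$-soundness of $\Theta+\psi$, and $|\prec|<|\Theta+\psi|_{\mathsf{WF}}=|\Theta|_{\mathsf{WF}}$ by Kreisel's theorem together with the observation that the supremum defining $|\Theta+\psi|_{\mathsf{WF}}$ is never attained (as $\mathsf{ACA}_0\vdash\mathsf{WF}(\prec)\to\mathsf{WF}(\prec+1)$). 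Given this, $(1)\Rightarrow(2)$ is immediate: $T\vdash\mathsf{WF}(\prec)$ gives $T\vdash^{\Sigma^1_1}\mathsf{WF}(\prec)$, hence $U\vdash^{\Sigma^1_1}\mathsf{WF}(\prec)$ by $(1)$, hence $|\prec|<|U|_{\mathsf{WF}}$; take the supremum over $\prec$. For $(2)\Rightarrow(1)$: given $T+\psi\vdash\varphi$ with $\varphi\in\Pi^1_1$ and $\psi$ true $\Sigma^1_1$, the sentence $\psi\to\varphi$ is a true $\Pi^1_1$ theorem of $T$, so by Kleene normal form there is a primitive recursive well-order $\prec$ with $T\vdash\mathsf{WF}(\prec)$ and $\mathsf{ACA}_0+\psi+\mathsf{WF}(\prec)\vdash\varphi$; since $|\prec|<|T|_{\mathsf{WF}}\le|U|_{\mathsf{WF}}$, the well-foundedness lemma gives $U\vdash^{\Sigma^1_1}\mathsf{WF}(\prec)$, and therefore $U\vdash^{\Sigma^1_1}\varphi$.

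For the normal form lemma, take $\prec_\sharp^T$ to be the ordinal sum, over all pairs $(\prec',p)$ with $\prec'$ primitive recursive and $p$ a $T$-proof of $\mathsf{WF}(\prec')$, of the orders $\prec'$. Using the uniform Kleene normal form one checks in $\mathsf{ACA}_0$ that $\mathsf{WF}(\prec_\sharp^T)$ is equivalent to $\mathsf{RFN}_{\Pi^1_1}(T)$: the sum is well-founded iff every summand is, and conversely every $\Pi^1_1$ theorem $\theta$ of $T$ is $\mathsf{ACA}_0$-provably equivalent to $\mathsf{WF}(\prec_\theta)$ for a summand $\prec_\theta$ of $\prec_\sharp^T$. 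When $T$ is merely arithmetically definable the set of pairs $(\prec',p)$ is arithmetical, so $\prec_\sharp^T$ is arithmetically (not primitive recursively) presented; one then lets $\prec_T$ be any primitive recursive presentation of its order type and absorbs the isomorphism $\prec_\sharp^T\cong\prec_T$ into the $\Sigma^1_1$ oracle. Finally $|\prec_\sharp^T|=|T|_{\mathsf{WF}}$: the summand order types $|\prec'|$ are exactly the ones appearing in the supremum defining $|T|_{\mathsf{WF}}$, hence cofinal in $|T|_{\mathsf{WF}}$, while every finite partial sum stays below $|T|_{\mathsf{WF}}$ because $|T|_{\mathsf{WF}}$ is additively indecomposable --- in fact an $\epsilon$-number, since $\mathsf{ACA}_0\vdash\mathsf{WF}(\prec)\to\mathsf{WF}(\omega^\prec)$ and $|T|_{\mathsf{WF}}\ge|\mathsf{ACA}_0|_{\mathsf{WF}}=\epsilon_0$ --- so the order type of the sum is the supremum of these partial sums, namely $|T|_{\mathsf{WF}}$.

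For $(2)\Leftrightarrow(3)$: by the normal form lemma $\mathsf{ACA}_0+\mathsf{RFN}_{\Pi^1_1}(U)$ and $\mathsf{ACA}_0+\mathsf{WF}(\prec_U)$ have the same $\Pi^1_1$ consequences modulo the $\Sigma^1_1$ oracle, so $(3)$ says exactly that $\mathsf{ACA}_0+\mathsf{WF}(\prec_U)\vdash^{\Sigma^1_1}\mathsf{RFN}_{\Pi^1_1}(T)$; I claim this is equivalent to $|U|_{\mathsf{WF}}=|\prec_U|\ge|\prec_T|=|T|_{\mathsf{WF}}$. If $|\prec_U|\ge|\prec_T|$, then $\prec_T$ embeds onto an initial segment of $\prec_U$ (a true $\Sigma^1_1$ fact), whence $\mathsf{ACA}_0+\mathsf{WF}(\prec_U)\vdash^{\Sigma^1_1}\mathsf{WF}(\prec_T)$, and then $\vdash^{\Sigma^1_1}\mathsf{RFN}_{\Pi^1_1}(T)$ by the normal form lemma. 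For the converse, suppose toward a contradiction that $\mathsf{ACA}_0+\mathsf{WF}(\prec)\vdash^{\Sigma^1_1}\mathsf{RFN}_{\Pi^1_1}(T)$ for a primitive recursive well-order $\prec$ with $|\prec|<|T|_{\mathsf{WF}}$; then $T\vdash^{\Sigma^1_1}\mathsf{WF}(\prec)$ by the well-foundedness lemma, so $T\vdash^{\Sigma^1_1}\mathsf{RFN}_{\Pi^1_1}(T)$, i.e.\ $T+\psi\vdash\mathsf{RFN}_{\Pi^1_1}(T)$ for some true $\Sigma^1_1$ $\psi$. But $\mathsf{Con}(T+\psi)$ is, provably in $\mathsf{ACA}_0$, the sentence $\neg\mathsf{Pr}_T(\neg\psi)$; since $\neg\psi$ is $\Pi^1_1$ and $\psi$ is true, $\mathsf{ACA}_0+\psi+\mathsf{RFN}_{\Pi^1_1}(T)\vdash\neg\mathsf{Pr}_T(\neg\psi)$, hence $T+\psi\vdash\mathsf{Con}(T+\psi)$, contradicting G\"odel's second incompleteness theorem ($T+\psi$ is $\Pi^1_1$-sound, hence consistent, and extends $\mathsf{ACA}_0$). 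This last step is brief; I expect the main obstacle to be the normal form lemma, both the exact identity $|\prec_T|=|T|_{\mathsf{WF}}$ and the Kleene-normal-form bookkeeping, including the use of the $\Sigma^1_1$ oracle to handle $\mathsf{Pr}_T$ when $T$ is only arithmetically definable.
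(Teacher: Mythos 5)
Your reduction of the corollary to Theorem \ref{main-intro} is exactly the paper's (the ordinals are well-ordered, so $\leq_{\mathsf{WF}}$ is a pre-well-order), and your proof of the first equivalence is essentially the paper's Lemma \ref{easy}. Your route to the second equivalence is genuinely different and attractive: the paper proves the left-to-right direction of Lemma \ref{hard} by showing that $\mathsf{ACA}_0$ plus an \emph{essentially} $\Sigma^1_1$ sentence $\theta$ proves $\mathsf{RFN}_{\Pi^1_1}(U)\to\mathsf{RFN}_{\Pi^1_1}(T)$ and then converting $\theta$ to a genuine $\Sigma^1_1$ sentence via $\Sigma^1_1\text{-}\mathsf{AC}_0$, whereas your normal-form lemma (the sum $\prec^T_\sharp$ of $T$-provably well-founded orders, with the isomorphism to a primitive recursive copy absorbed into the oracle) bypasses that machinery; the $\mathsf{ACA}_0$-provable equivalence $\mathsf{RFN}_{\Pi^1_1}(T)\leftrightarrow\mathsf{WF}(\prec^T_\sharp)$ is correct (it is the paper's Lemma \ref{pwf} plus routine reasoning about sums), the identity $|\prec^T_\sharp|=|T|_{\mathsf{WF}}$ is right, and arithmetical definability of $T$ enters exactly where the paper's Proposition \ref{negative-prop} shows some such hypothesis is necessary.

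The genuine gap is the last step of your $(3)\Rightarrow(2)$ argument. Having derived $T+\psi\vdash\mathsf{RFN}_{\Pi^1_1}(T)$ and then $T+\psi\vdash\mathsf{Con}(T+\psi)$, you conclude by ``contradicting G\"odel's second incompleteness theorem.'' But $T$ is only assumed to be arithmetically definable, so $\mathsf{Pr}_T$ need not be $\Sigma^0_1$, the derivability conditions for $T+\psi$ are not available (the theory need not prove true instances of its own axiomhood predicate, so even $T+\psi\vdash\varphi\Rightarrow T+\psi\vdash\mathsf{Pr}_{T+\psi}(\varphi)$ can fail), and the naive generalization of G\"odel II to non-recursively-enumerable presentations is simply false: by Feferman's work there are $\Pi^0_1$ numerations of (extensionally) $\mathsf{PA}$ relative to which the theory proves its own consistency. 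The paper is careful on precisely this point: its right-to-left argument engineers the contradiction inside the \emph{finitely axiomatized} theory $\mathsf{ACA}_0+F+G+\mathsf{RFN}_{\Pi^1_1}(U)$, to which ordinary G\"odel II applies. Your argument is repairable in the same spirit: from $T+\psi\vdash\mathsf{RFN}_{\Pi^1_1}(T)$ extract finitely many $T$-axioms $t_1,\dots,t_n$ with $\mathsf{ACA}_0+t_1+\dots+t_n+\psi\vdash\mathsf{RFN}_{\Pi^1_1}(T)$, add to the oracle the true arithmetic (hence $\Sigma^1_1$) sentence asserting that each $t_i$, and every axiom of $\mathsf{ACA}_0$, is a $T$-axiom, so that $\mathsf{RFN}_{\Pi^1_1}(T)\to\mathsf{RFN}_{\Pi^1_1}(\mathsf{ACA}_0+t_1+\dots+t_n)$ becomes provable, and then run your $\mathsf{Con}$ trick for the recursively axiomatized theory $\mathsf{ACA}_0+t_1+\dots+t_n$ plus the oracle, whose consistency follows from Lemma \ref{lemma}; alternatively one can route through the paper's Lemma \ref{unboundedness} together with Barwise--Schlipf conservation. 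As written, however, the appeal to the second incompleteness theorem for $T+\psi$ is unjustified.
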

Note that in the statement of Theorem \ref{main-intro} and its corollary, we have dropped the non-mathematical quantification over ``natural'' theories.



\subsection{The Conceptual Framework}\label{conceptual}

In this paper we will be dealing with $| \; |_{\mathsf{WF}}$ at a rather abstract level. In particular, we will be concerned \emph{only} with the partition and ordering that $| \; |_{\mathsf{WF}}$ induces on theories; we will not be concerned with the various other projects that usually attend the calculation of $| \; |_{\mathsf{WF}}$. This may seem like a myopic or naive perspective on ordinal analysis. For instance, Rathjen writes:
\begin{displayquote}[Rathjen \cite{rathjen1999realm}, p. 220]
In the literature, the result of an ordinal analysis of a given theory $T$ is often stated in a rather terse way by saying that the supremum of the provable recursive well-orderings \dots is a certain ordinal $\alpha$. This is at best a shorthand for a much more informative statement. From questions that I've been asked over the years, I know that sloppy talk about proof-theoretic ordinals has led to misconceptions about ordinal-theoretic proof theory.
\end{displayquote}
He also writes that  ``in general, the mere knowledge of'' $|T|_{\mathsf{WF}}$ ``is not the goal of an ordinal analysis of $T$'' (\cite{rathjen1999realm}, p. 237). Instead, Rathjen emphasizes that the calculation of $|T|_{\mathsf{WF}}$ usually yields a characterization of $T$'s provably recursive functions, examples of $T$-independent combinatorial principles, and proof-theoretic reductions between axiom systems. One might worry that my purported ``characterizations of ordinal analysis'' are attending only to the ``shorthand'' and missing out on the ``much more informative'' aspects of ordinal analysis.

Before addressing this concern it is worth flagging that there is no consensus concerning the \emph{primary} benefit of ordinal analysis. The original motivation for ordinal analysis was to develop quasi-finitary consistency proofs in the spirit of Hilbert's Program.\footnote{Gentzen \cite{gentzen1969consistency} described his consistency proof as a ``real vindication of the disputable parts of elementary number theory.'' Likewise, Takeuti \cite{takeuti1975consistency} called
Gentzen’s proof ``greatly reassuring'' and wrote that it enhanced his ``confidence in the consistency and truth of Peano arithmetic''.} Nevertheless, the value of these consistency proofs has been widely criticized. According to Kreisel \cite{kreisel1979formal}, Tarski said that Gentzen's proof only increased his confidence in $\mathsf{PA}$'s consistency ``by an epsilon.''

Even the value of ordinal analysis for securing its standard proof-theoretic corollaries has been challenged. For instance, Kentaro Sato recently gave ordinal-free proofs of the reductions from $\mathbf{\Sigma^1_2}\text{-}\mathsf{AC}+\mathsf{BI}$ to $\mathbf{T}_0$ \cite{sato2015new} and $\mathbf{\Sigma^1_1}\text{-}\mathsf{DC}_0+(\Pi^1_{n+1}\text{-}\mathsf{Ind})$ to $\mathbf{\Delta^1_1}\text{-}\mathsf{CA}_0+(\Pi^1_{n+1}\text{-}\mathsf{Ind})$ \cite{sato2022new}. He wrote that this leaves no known reducibility result between classical theories whose only proof uses ordinal analysis (\cite{sato2022new}, \textsection 1.2). Sato's proofs use ``relatively easy proof-theoretic techniques'' rather than the heavy machinery used to calculate proof-theoretic ordinals. Moreover, the easy techniques are adaptable to subsystems of second-order arithmetic and set theory that are currently beyond the reach of ordinal analysis.

Perhaps ordinal analysis has no \emph{primary} benefit. Yet, whatever the status of the particular applications just reviewed, ordinal analysis remains interesting. Indeed, in the author's opinion, one of the most fascinating aspects of ordinal analysis \emph{as such} is that (1) it pre-well-orders axiomatic theories and (2) this ordering is clearly connected to the typical orderings of logical strength, e.g. consistency strength.\footnote{This is not to say that it is clear what the connection is.} Recall that one of the central questions in the foundations of mathematics is: Why are the natural axiomatic theories pre-well-ordered by consistency strength?\footnote{See the discussion in the previous subsection.} (1) and (2) suggest that ordinal analysis may be relevant to answering this question.

Even those who emphasize other aspects of ordinal analyses promote the intuitive picture of ordinal analysis as a means of ranking axiom systems according to their ``strength.'' Rathjen writes:
\begin{displayquote}[Rathjen \cite{rathjen1999realm}, p. 219]
A central theme running through all the main areas of Mathematical Logic is the classification of sets, functions or theories, by means of transfinite hierarchies whose ordinal levels measure their ‘rank’ or ‘complexity’ in some sense appropriate to the underlying context. In Proof Theory this is manifest in the assignment of ‘proof theoretic ordinals’ to theories, gauging their ‘consistency strength’ and `computational power'.
\end{displayquote}

Of course, as is well known, $|T|_{\mathsf{WF}}$ does not \emph{exactly} gauge the consistency strength of theories; some theories that are not equi-consistent share their proof-theoretic ordinal. Similar issues attend computational power. It is worth noting that variants of $|T|_{\mathsf{WF}}$ have been introduced that are designed to gauge consistency strength and computational power; for instance, see $|T|_{\Pi^0_1}$ and $|T|_{\Pi^0_2}$ in \cite{beklemishev2005reflection}. However, these values are notation dependent. Relative to particularly ``natural'' choices of ordinal notation systems, these values coincide with $|T|_{\mathsf{WF}}$, at least for many choices of $T$. This suggests an intimate connection between $|T|_{\mathsf{WF}}$, consistency strength and computational power, but it is difficult to prove anything to that effect given the notation dependence of $|T|_{\Pi^0_1}$ and $|T|_{\Pi^0_2}$.

So $|T|_{\mathsf{WF}}$ does not exactly gauge consistency strength and computational power. Then what \emph{does} $|T|_{\mathsf{WF}}$ gauge? The results in this paper answer that question; in particular, we characterize in exact terms the analogue of consistency strength that ordinal analysis is actually measuring. The fact that this analogue of consistency strength induces a pre-well-ordering on axiom systems is, in the author's opinion, interesting for reasons independent of ordinal analysis.

To prevent a misunderstanding, let's note that, for any theory $T$, our characterizations do not attach any obvious significance to the \emph{ordinal number} $|T|_{\mathsf{WF}}$. Rather, they attach significance to the position of $T$ in the ordinal analysis partition and ordering. To make this point a bit more explicit, define $|T|^\star_{\mathsf{WF}}:=|T|_{\mathsf{WF}}+1$. Then $|T|^\star_{\mathsf{WF}}= |U|^\star_{\mathsf{WF}}$ if and only if $|T|_{\mathsf{WF}}= |U|_{\mathsf{WF}}$. Likewise, $|T|^\star_{\mathsf{WF}}\leq |U|^\star_{\mathsf{WF}}$ if and only if $|T|_{\mathsf{WF}}\leq |U|_{\mathsf{WF}}$. So our characterizations of the ordinal analysis partition and ordering do not depend on any assumption that $|T|_{\mathsf{WF}}$ rather than $|T|^\star_{\mathsf{WF}}$ is the ``correct'' ordinal value of $T$. Rather, the reason for our interest in $|T|_{\mathsf{WF}}$ is that it exhibits the relative placement of $T$ in the ordinal analysis ordering and partition. So Rathjen's remark that ``the mere knowledge'' of $|T|_{\mathsf{WF}}$ is not a sensible goal for an ordinal analysis of $T$ still stands.\footnote{It is worth mentioning that Pohlers has proved many results at our same level of abstraction, i.e., he has proved results about $| \; |_{\mathsf{WF}}$ \emph{as such}. However, his particular proposals about the significance of this ordering are not entirely the same as the ones we will pursue here; for instance, see his notion of the $\Pi^1_1$-spectrum of a theory in \cite{pohlers1998subsystems}.}


\subsection{Outline of the Paper}

Our main goals in this paper are conceptual rather than technical. Though our arguments are elementary, the characterizations they engender seem to have been heretofore unnoticed.

The main technical component of our characterizations of ordinal analysis is the following fact, which states that well-foundedness is a universal $\Pi^1_1$ property, provably in $\mathsf{ACA}_0$.
\begin{theorem}\label{kleene-brouwer}
For every $\Pi^1_1$ sentence $\varphi$ there is a primitive recursive presentation $\prec$ of a linear ordering such that $\mathsf{ACA}_0 \vdash \varphi \leftrightarrow \mathsf{WF}(\prec)$.
\end{theorem}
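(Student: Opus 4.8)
The plan is to use the classical correspondence between $\Pi^1_1$ formulas and well-foundedness, via the Kleene normal form theorem and the Kleene--Brouwer ordering, and to check that the relevant equivalences are provable in $\mathsf{ACA}_0$. First I would invoke the normal form theorem (provable already in $\mathsf{ACA}_0$; see \cite{simpson2009subsystems}) to write $\varphi$ as $\forall f\,\exists n\,\theta(\bar f(n))$, where $f$ ranges over functions $\mathbb{N}\to\mathbb{N}$, $\bar f(n)$ codes $\langle f(0),\dots,f(n-1)\rangle$, and $\theta$ is primitive recursive. Define the primitive recursive tree $T:=\{s:\forall m\le \mathrm{lh}(s)\ \neg\theta(s\!\restriction\! m)\}$, which is downward closed; since $f$ is a path through $T$ iff $\forall m\,\neg\theta(\bar f(m))$, $\mathsf{ACA}_0$ proves $\varphi$ equivalent to ``$T$ has no infinite path.'' Let $\prec$ be the Kleene--Brouwer ordering restricted to $T$: for $s,t\in T$, put $s\prec t$ iff $s$ properly extends $t$, or $s$ and $t$ are incompatible and $s$ lies to the left of $t$. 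This is a primitive recursive linear ordering, with field the primitive recursive set $T$ (if one insists on field $\mathbb{N}$, put the non-members of $T$ above everything in their natural order; this does not affect the argument). It then suffices to prove in $\mathsf{ACA}_0$ that $\mathsf{WF}(\prec)$ holds iff $T$ has no infinite path.

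One direction is routine: if $f$ is an infinite path through $T$, then by $\Delta^0_1$-comprehension the set $X=\{\bar f(n):n\in\mathbb{N}\}$ exists, is nonempty, is contained in $T$, and has no $\prec$-minimal element, since $\bar f(n+1)$ properly extends $\bar f(n)$ and hence $\bar f(n+1)\prec\bar f(n)$; so $\neg\mathsf{WF}(\prec)$. The substantive direction, which I expect to be the main obstacle, is to derive an infinite path from $\neg\mathsf{WF}(\prec)$ while staying inside $\mathsf{ACA}_0$. Here I would first extract, by arithmetical recursion (available in $\mathsf{ACA}_0$), a $\prec$-descending sequence: given a nonempty $X\subseteq T$ with no $\prec$-minimal element, let $h(0)$ be the least code in $X$ and $h(n+1)$ the least code in $X$ that is $\prec h(n)$, so $h(n+1)\prec h(n)$ and $h(n)\in T$ for all $n$. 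The key structural fact is that each coordinate is eventually constant along a $\prec$-descending sequence: if $h(n+1)\prec h(n)$ and both properly extend a common prefix $\tau$ of length $k$, then $h(n+1)(k)\le h(n)(k)$ (a proper extension preserves the value at $k$; a step to the left at position $j\ge k$ either, when $j=k$, strictly decreases it, or, when $j>k$, leaves it fixed). I would then define, by a second arithmetical recursion on $k$, a ``settled prefix'' $\tau_k\in T$ of length $k$ together with a stage $m_k$ such that $h(m)$ properly extends $\tau_k$ for all $m\ge m_k$: given $(\tau_k,m_k)$, the sequence $m\mapsto h(m_k+m)(k)$ is non-increasing, hence attains its minimum $\ell$ from some stage on, and we set $\tau_{k+1}:=\tau_k{}^\frown\langle\ell\rangle$ with $m_{k+1}$ a corresponding stage. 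Finding the minimum of a non-increasing sequence and the stage at which it is reached is an arithmetical search, so the recursion is legitimate in $\mathsf{ACA}_0$. Finally $f:=\bigcup_k\tau_k$ satisfies $\bar f(k)=\tau_k$, and since $\tau_k$ is an initial segment of $h(m_k)\in T$ and $T$ is downward closed, $\bar f(k)\in T$ for all $k$; thus $f$ is an infinite path.

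Combining the two directions with the equivalence between $\varphi$ and ``$T$ has no infinite path'' gives $\mathsf{ACA}_0\vdash\varphi\leftrightarrow\mathsf{WF}(\prec)$. The delicate points, which I expect to need care rather than genuine difficulty, are the verification that both recursions and the ``minimum of a non-increasing sequence'' step are carried out by arithmetical comprehension, and the bookkeeping ensuring that each $h(m)$ with $m\ge m_k$ \emph{properly} extends $\tau_k$, so that the coordinate $h(m)(k)$ used in the recursion is defined.
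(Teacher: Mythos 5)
Your proposal is correct and follows essentially the same route as the paper, which simply cites the standard construction (Kleene normal form for $\Pi^1_1$ sentences plus the Kleene--Brouwer ordering, Simpson's Lemmas V.1.4 and V.1.8); your write-up is a worked-out version of exactly that argument, including the $\mathsf{ACA}_0$ leftmost-path extraction from a $\prec$-descending sequence. The points you flag (arithmetical recursions, the stabilization bookkeeping so that each $h(m)$ with $m\ge m_k$ properly extends $\tau_k$) are indeed routine and handled as you indicate.
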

The proof of this result essentially involves the construction of Kleene--Brouwer orderings in $\mathsf{ACA}_0$. For details see \cite{simpson2009subsystems} Lemmas V.1.4 and V.1.8.

The characterization of ordinal analysis as a partition was inspired by Montalb\'{a}n's \cite{montalban2017degree} characterization of the partition that identifies reals $A$ and $B$ when $\omega_1^A=\omega_1^B$. Montalb\'{a}n's result was suggestive because there are other analogies between the equivalence relation $\omega_1^A=\omega_1^B$ and the equivalence relation $|T|_{\mathsf{WF}}=|U|_{\mathsf{WF}}$; see the author's work with Lutz \cite{lutz2020incompleteness} for details. The characterization of ordinal analysis as an ordering extends earlier work by Pakhomov and the author \cite{pakhomov2021reflection}. They show that, in a large swathe of cases, proof-theoretic ordinals coincide with ranks of theories in a proof-theoretic reflection ordering; this latter ordering is not linear, however, so this earlier work does not yield Theorem \ref{main-intro}.

Here is our plan for the rest of the paper. In \textsection \ref{kreisel-section}, we will characterize the partition induced by ordinal analysis. In \textsection \ref{main-section} we will characterize the ordering induced by ordinal analysis. We will also prove a negative theorem to the effect that Theorem \ref{main-intro} cannot be strengthened.


\section{Ordinal Analysis as a Partition}\label{kreisel-section}

Before diving into the proof of Theorem \ref{main-thm}, we should check that the ordinal analysis partition is a good partition. Thus, we will first derive a version of Kreisel's Theorem \ref{kreisel} for theories that are $\Sigma^1_1$-definable. For proofs of the original Kreisel theorem, see \cite{pohlers2008proof} Theorem 6.7.5 or \cite{rathjen1999realm} Proposition 2.24. 

\begin{proposition}\label{prop}
Let $T$ be a $\Pi^1_1$-sound extension of $\mathsf{ACA}_0$. Then $|T|_{\mathsf{WF}}=|T+V|_{\mathsf{WF}}$ for any set $V$ of true $\Sigma^1_1$ sentences.
\end{proposition}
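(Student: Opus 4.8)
The plan is to prove $|T|_{\mathsf{WF}}\le|T+V|_{\mathsf{WF}}$ and $|T+V|_{\mathsf{WF}}\le|T|_{\mathsf{WF}}$ separately. The first is immediate, since $T\subseteq T+V$: any primitive recursive presentation $\prec$ of a well-ordering with $T\vdash\mathsf{WF}(\prec)$ also satisfies $T+V\vdash\mathsf{WF}(\prec)$. For the converse, fix a primitive recursive presentation $\prec$ of a well-ordering with $T+V\vdash\mathsf{WF}(\prec)$ and put $\alpha=\mathrm{otp}(\prec)$; it is enough to show $\alpha\le|T|_{\mathsf{WF}}$. Since proofs are finite, $T+\psi\vdash\mathsf{WF}(\prec)$ for some conjunction $\psi$ of finitely many members of $V$ (the empty conjunction being handled trivially), and $\psi$ is then a true $\Sigma^1_1$ sentence; hence $T\vdash\neg\psi\vee\mathsf{WF}(\prec)$.

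Now I would build a primitive recursive well-ordering that $T$ proves well-founded and whose order type is at least $\alpha$. As $\neg\psi$ is $\Pi^1_1$, Theorem \ref{kleene-brouwer} gives a primitive recursive presentation $\lessdot$ of a linear order with $\mathsf{ACA}_0\vdash\neg\psi\leftrightarrow\mathsf{WF}(\lessdot)$; since $\psi$ is true, $\lessdot$ is ill-founded. Let $S_\lessdot$ and $S_\prec$ be the primitive recursive trees of strictly descending finite sequences through $\lessdot$ and through $\prec$, let $S:=S_\lessdot\otimes S_\prec$ be the product tree of equal-length nodes, and let $\prec^\ast:=\mathrm{KB}(S)$ be its Kleene--Brouwer ordering, formed with the standard ordering of $\mathbb{N}$ on the coded alphabet so that $\mathsf{ACA}_0\vdash\mathsf{WF}(\prec^\ast)\leftrightarrow\mathsf{WF}(S)$. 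Arguing in $\mathsf{ACA}_0$: $S$ has an infinite branch iff both $S_\lessdot$ and $S_\prec$ do, and $\mathsf{WF}(S_\lessdot)\leftrightarrow\mathsf{WF}(\lessdot)$, $\mathsf{WF}(S_\prec)\leftrightarrow\mathsf{WF}(\prec)$; hence $\mathsf{ACA}_0\vdash\mathsf{WF}(\prec^\ast)\leftrightarrow(\mathsf{WF}(\lessdot)\vee\mathsf{WF}(\prec))$, and so $T\vdash\mathsf{WF}(\prec^\ast)$. By $\Pi^1_1$-soundness of $T$, $\prec^\ast$ is a genuine well-ordering, so $\mathrm{otp}(\prec^\ast)\le|T|_{\mathsf{WF}}$.

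The remaining — and most delicate — step is $\mathrm{otp}(\prec^\ast)\ge\alpha$. Since $\lessdot$ is ill-founded, fix an infinite branch $f$ through $S_\lessdot$; then $t\mapsto(f{\restriction}|t|,\,t)$ embeds $S_\prec$ into $S$ as a subtree, sending root to root and respecting the tree order, so $\mathrm{rank}(S)\ge\mathrm{rank}(S_\prec)$. A routine induction (the subtree of $S_\prec$ below a node $\langle b\rangle$ is the descending-sequence tree of $\{c:c\prec b\}$) gives $\mathrm{rank}(S_\prec)=\mathrm{otp}(\prec)=\alpha$, and since the Kleene--Brouwer order type of a well-founded tree is at least its rank, $\mathrm{otp}(\prec^\ast)=\mathrm{otp}(\mathrm{KB}(S))\ge\mathrm{rank}(S)\ge\alpha$. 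Therefore $\alpha\le|T|_{\mathsf{WF}}$, and taking the supremum over all admissible $\prec$ yields $|T+V|_{\mathsf{WF}}\le|T|_{\mathsf{WF}}$.

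I expect the main obstacle to be exactly this order-type bookkeeping: a disjunction $\mathsf{WF}(\lessdot)\vee\mathsf{WF}(\prec)$ of well-foundedness claims for linear orders cannot be realized by a naive sum or product of $\lessdot$ and $\prec$ (those compute the conjunction), so one is forced through the Kleene--Brouwer construction on a product of trees, and then one must check that this detour does not collapse the order type below $\mathrm{otp}(\prec)$. It is worth noting that the argument never uses recursive axiomatizability of $T$, so it does indeed extend Kreisel's Theorem \ref{kreisel} to $\Sigma^1_1$-definable — in fact arbitrary $\Pi^1_1$-sound — extensions of $\mathsf{ACA}_0$, as required for Definition \ref{good} and Theorem \ref{main-thm}.
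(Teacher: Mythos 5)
Your proof is correct, but it takes a genuinely different route from the paper's. The paper's own argument is a short reduction: since only finitely many axioms of $T$ and of $V$ occur in a proof of $\mathsf{WF}(\alpha)$, it passes to the finitely axiomatized (hence recursively axiomatized) $\Pi^1_1$-sound subtheory $\mathsf{ACA}_0+\tau_1+\dots+\tau_n$ and invokes Kreisel's Theorem \ref{kreisel} as a black box for that theory together with the true $\Sigma^1_1$ conjunction $\nu_1\wedge\dots\wedge\nu_k$, concluding that some primitive recursive $\beta$ with $\mathsf{otyp}(\beta)\geq\mathsf{otyp}(\alpha)$ is already provably well-founded in $T$. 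You instead reprove the key step from scratch: from $T\vdash\neg\psi\vee\mathsf{WF}(\prec)$ you realize the disjunction as $\mathsf{WF}(\prec^\ast)$ for a single primitive recursive order, via the Kleene--Brouwer ordering of the equal-length product of descending-sequence trees, and then verify externally (infinite branch through the ill-founded factor, rank of the descending-sequence tree of $\prec$ equal to $\mathsf{otyp}(\prec)$, Kleene--Brouwer order type at least the tree rank) that $\mathsf{otyp}(\prec^\ast)\geq\mathsf{otyp}(\prec)$ --- which is in essence the standard proof of Kreisel's theorem itself. The paper's route buys brevity and reuse of a cited classical result; yours buys self-containedness (only Theorem \ref{kleene-brouwer} plus standard $\mathsf{ACA}_0$-facts about trees and KB orderings are used) and makes explicit that neither recursive axiomatizability nor $\Sigma^1_1$-definability of $T$ is needed, exactly as the proposition is stated. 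Your order-type bookkeeping for the delicate step checks out; the only cosmetic caveat is that the finite conjunction $\psi$ and its negation are merely $\mathsf{ACA}_0$-provably equivalent to $\Sigma^1_1$ and $\Pi^1_1$ sentences (contract the set quantifiers by pairing) before Theorem \ref{kleene-brouwer} is applied, which is harmless.
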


\begin{proof}
Clearly $|T|_{\mathsf{WF}}\leq |T+V|_{\mathsf{WF}}$. It remains to show that  $|T+V|_{\mathsf{WF}}\leq |T|_{\mathsf{WF}}$.

Suppose that $T+V\vdash\mathsf{WF}(\alpha)$ for primitive recursive $\alpha$; it suffices to show that $T\vdash\mathsf{WF}(\beta)$ for some primitive recursive $\beta$ such that $\mathsf{otyp}(\beta)\geq \mathsf{otyp}(\alpha)$. Note that only finitely many sentences from $T$ and $V$ are used in the proof exhibiting that $T+V\vdash \mathsf{WF}(\alpha)$. So we have:
$$\mathsf{ACA}_0+\tau_1+\dots+\tau_n+\nu_1+\dots+\nu_k\vdash\mathsf{WF}(\alpha)$$
where $\tau_1,\dots,\tau_n$ are axioms of $T$ and $\nu_1,\dots,\nu_k$ are from $V$. Note that $$(\nu_1\wedge\dots\wedge\nu_k)$$ is a true $\Sigma^1_1$ sentence. Moreover, note that
$$\mathsf{ACA}_0+\tau_1+\dots+\tau_n$$ is finitely axiomatized and $\Pi^1_1$-sound. Thus, Kreisel's original Theorem \ref{kreisel} applies to the theory $$\mathsf{ACA}_0+\tau_1+\dots+\tau_n.$$ So we infer that $$|\mathsf{ACA}_0+\tau_1+\dots+\tau_n|_{\mathsf{WF}}= |\mathsf{ACA}_0+\tau_1+\dots+\tau_n+\nu_1+\dots+\nu_k|_{\mathsf{WF}}.$$
It follows that $T\vdash\mathsf{WF}(\beta)$ for some primitive recursive $\beta$ such that $\mathsf{otyp}(\beta)\geq \mathsf{otyp}(\alpha)$. \end{proof}


\begin{remark}
Note that Proposition \ref{prop} is provable in $\mathsf{ACA}_0$. Indeed, proofs of the original Kreisel theorem (e.g., the proof of Proposition 2.2.4 in \cite{rathjen1999realm}) are valid in $\mathsf{ACA}_0$. 
\end{remark}

Before turning to our first main theorem, let's record one small lemma.

\begin{lemma}\label{lemma}
If $T$ is $\Pi^1_1$ sound and $\varphi$ is true $\Sigma^1_1$, then $T+\varphi$ is $\Pi^1_1$ sound.
\end{lemma}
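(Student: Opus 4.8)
The plan is to reduce $\Pi^1_1$-soundness of $T+\varphi$ to $\Pi^1_1$-soundness of $T$ by absorbing the $\Sigma^1_1$ hypothesis $\varphi$ into the conclusion. So suppose $T+\varphi\vdash\psi$ for some $\psi\in\Pi^1_1$; the goal is to show that $\psi$ is true. By the deduction theorem, $T\vdash\varphi\to\psi$, and the crux is that $\varphi\to\psi$ is, provably in $\mathsf{ACA}_0$, equivalent to a genuine $\Pi^1_1$ sentence.

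To see this, put $\varphi$ in normal form $\exists X\,\theta(X)$ with $\theta$ arithmetical and $\psi$ in normal form $\forall Y\,\eta(Y)$ with $\eta$ arithmetical. Then $\varphi\to\psi$ is equivalent to $\forall X\,\forall Y\,\bigl(\theta(X)\to\eta(Y)\bigr)$, and after contracting the two universal set quantifiers into one we obtain a $\Pi^1_1$ sentence $\chi$ with $\mathsf{ACA}_0\vdash(\varphi\to\psi)\leftrightarrow\chi$. Since $T$ extends $\mathsf{ACA}_0$ and $T\vdash\varphi\to\psi$, we get $T\vdash\chi$. (Alternatively, one could invoke Theorem \ref{kleene-brouwer} to replace $\psi$ outright by a well-foundedness statement, but the direct normal-form manipulation is shorter.)

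Now apply the $\Pi^1_1$-soundness of $T$: since $\chi\in\Pi^1_1$ and $T\vdash\chi$, the sentence $\chi$ is true, hence so is $\varphi\to\psi$. As $\varphi$ was assumed true, $\psi$ is true, which is what we wanted. I do not anticipate any real obstacle here; the only step meriting a moment's care is the prenexing that witnesses $\varphi\to\psi\in\Pi^1_1$ up to $\mathsf{ACA}_0$-provable equivalence, and the whole argument is evidently formalizable in $\mathsf{ACA}_0$.
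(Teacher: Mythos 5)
Your proposal is correct and matches the paper's proof: deduce $T\vdash\varphi\to\psi$, observe that this implication is (up to provable prenexing) $\Pi^1_1$, apply the $\Pi^1_1$-soundness of $T$, and use the truth of $\varphi$ to conclude $\psi$. The paper simply takes the $\Pi^1_1$ status of $\varphi\to\psi$ for granted, whereas you spell out the quantifier contraction; the argument is the same.
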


\begin{proof}
Let $T+\varphi\vdash \psi$ for $\psi\in\Pi^1_1$. Then $T\vdash \varphi\to\psi$. Since $T$ is $\Pi^1_1$ sound, $\varphi\to\psi$ is true. Since $\varphi$ is true, $\psi$ must be true.
\end{proof}

Now we turn to our first main theorem. Our goal is to characterize the ordinal analysis partition in terms of the \emph{good partitions}; for the definition of good partitions see Definition \ref{good}. It is immediate from its definition that the ordinal analysis partition has the first property. In the beginning of this section we saw that the second claim is true of the ordinal analysis partition. We will now show that they are \emph{not} both true of any partition that makes distinctions not made by the ordinal analysis partition.

We restate Theorem \ref{main-thm} here for convenience.

\begin{theorem*}
Let $\equiv$ be good. Let $T$ and $U$ be $\Sigma^1_1$-definable and $\Pi^1_1$-sound extensions of $\mathsf{ACA}_0$ such that $|T|_{\mathsf{WF}}=|U|_{\mathsf{WF}}$. Then $T\equiv U$.
\end{theorem*}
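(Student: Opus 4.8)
The plan is to derive the statement from both clauses of goodness, using Theorem \ref{kleene-brouwer} to convert an arbitrary $\Pi^1_1$ theorem into a well-foundedness statement and Proposition \ref{prop} to control its order type. Let $V_0$ be the set of \emph{all} true $\Sigma^1_1$ sentences. Using the $\Sigma^1_1$ partial truth predicate available in $\mathsf{ACA}_0$, the set $V_0$ is $\Sigma^1_1$-definable, so $T+V_0$ and $U+V_0$ are $\Sigma^1_1$-definable; and by the argument of Lemma \ref{lemma} applied to finite conjunctions of members of $V_0$ (exactly as in the proof of Proposition \ref{prop}), $T+V_0$ and $U+V_0$ are $\Pi^1_1$-sound. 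By clause (2) of goodness, $T\equiv T+V_0$ and $U\equiv U+V_0$, so by transitivity it suffices to prove $T+V_0\equiv U+V_0$; by clause (1) of goodness it in turn suffices to prove that $T+V_0$ and $U+V_0$ have the same $\Pi^1_1$ theorems. Since, by compactness, $T+V_0\vdash\varphi$ iff $T\vdash^{\Sigma^1_1}\varphi$ (and likewise for $U$), the task reduces to showing $T\vdash^{\Sigma^1_1}\varphi \Leftrightarrow U\vdash^{\Sigma^1_1}\varphi$ for all $\varphi\in\Pi^1_1$.

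By symmetry it is enough to prove the forward direction. Suppose $\varphi\in\Pi^1_1$ and $T\vdash^{\Sigma^1_1}\varphi$, i.e.\ $T+V_0\vdash\varphi$. Since $T+V_0$ is $\Pi^1_1$-sound, $\varphi$ is true. By Theorem \ref{kleene-brouwer} fix a primitive recursive presentation $\prec$ of a linear order with $\mathsf{ACA}_0\vdash\varphi\leftrightarrow\mathsf{WF}(\prec)$; as $\varphi$ is true, $\prec$ is a well-order, say of order type $\beta$. From $T+V_0\vdash\mathsf{WF}(\prec)$ and Proposition \ref{prop}, $\beta\leq|T+V_0|_{\mathsf{WF}}=|T|_{\mathsf{WF}}$; in fact $\beta<|T|_{\mathsf{WF}}$, since for the presentation $\prec^{+}$ obtained by appending a top point to $\prec$ one has $\mathsf{ACA}_0\vdash\mathsf{WF}(\prec)\to\mathsf{WF}(\prec^{+})$, so $|T+V_0|_{\mathsf{WF}}$ is not attained by a provably well-founded presentation. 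Hence $\beta<|T|_{\mathsf{WF}}=|U|_{\mathsf{WF}}$.

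Now choose a primitive recursive presentation $\prec^{\ast}$ of a well-order with $U\vdash\mathsf{WF}(\prec^{\ast})$ and $\mathsf{otyp}(\prec^{\ast})\geq\beta$; such a presentation exists precisely because $\beta$ is strictly below the supremum $|U|_{\mathsf{WF}}$. Since $\prec$ and $\prec^{\ast}$ are well-orders with $\mathsf{otyp}(\prec)\leq\mathsf{otyp}(\prec^{\ast})$, there is an order-preserving injection of $\prec$ into $\prec^{\ast}$; let $\theta$ be the $\Sigma^1_1$ sentence asserting that such an injection exists. Then $\theta$ is true, and $\mathsf{ACA}_0$ proves $\theta\wedge\mathsf{WF}(\prec^{\ast})\to\mathsf{WF}(\prec)$, by pulling back an infinite descending sequence along the injection. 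Therefore $U+\theta\vdash\mathsf{WF}(\prec)$, hence $U+\theta\vdash\varphi$; since $\theta$ is true and $\Sigma^1_1$, this gives $U\vdash^{\Sigma^1_1}\varphi$, as required.

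I expect the main obstacle to be conceptual rather than computational, and it is exactly the one the argument is built to overcome: the bare hypothesis $|T|_{\mathsf{WF}}=|U|_{\mathsf{WF}}$ does \emph{not} imply that $T$ and $U$ prove the same $\Pi^1_1$ sentences, since one theory may prove $\mathsf{WF}(\prec)$ for a pathological presentation $\prec$ of small order type that the other, of the same proof-theoretic ordinal, fails to prove well-founded. Clause (2) of goodness is what dissolves this: instead of transferring the particular statement $\mathsf{WF}(\prec)$, we transfer \emph{some} provably well-founded presentation of at least the same order type and then repair it with a $\Sigma^1_1$-definable embedding, which is free in the presence of the oracle. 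The one further point requiring care is the strict inequality $\beta<|T|_{\mathsf{WF}}$, without which no suitable $\prec^{\ast}$ need exist.
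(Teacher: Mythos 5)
Your proof is correct, and although it runs on the same engine as the paper's---Theorem \ref{kleene-brouwer} to convert a $\Pi^1_1$ theorem into a well-foundedness statement, Proposition \ref{prop} to keep the ordinal fixed under true $\Sigma^1_1$ extensions, and a true $\Sigma^1_1$ embedding sentence to transfer $\mathsf{WF}(\prec)$ to the other theory over $\mathsf{ACA}_0$---the decomposition is genuinely different. The paper never passes to the full oracle: it builds bespoke extensions $\widehat{T}$ and $\widehat{U}$ by adjoining only the particular embedding sentences $\exists f\,\mathsf{Emb}(f,\alpha,\beta)$ attached to the $\Pi^1_1$ theorems of the two theories, and then must check, clause by clause, that these case-defined axiom sets are $\Sigma^1_1$-definable and $\Pi^1_1$-sound before clauses (1) and (2) of goodness can be invoked. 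Your move of adjoining the entire set $V_0$ of true $\Sigma^1_1$ sentences to both theories makes the definability check trivial (via the $\Sigma^1_1$ partial truth definition, the same device the paper uses for $\mathsf{True}_{\Pi^1_1}$), and it reduces the theorem to showing that the $\Pi^1_1$ consequences of $T$ and $U$ in the presence of the $\Sigma^1_1$ oracle coincide---which is precisely the right-to-left direction of the paper's Lemma \ref{easy}, proved there by the same embedding argument you give. So your route buys a cleaner definability verification and unifies the partition characterization with the ordering characterization of \textsection\ref{main-section}, at the cost of invoking the maximal oracle rather than exhibiting the specific finitely many true $\Sigma^1_1$ facts that suffice; it does require, and you duly verify, that $T+V_0$ and $U+V_0$ lie in the domain of the partition ($\Sigma^1_1$-definable, $\Pi^1_1$-sound extensions of $\mathsf{ACA}_0$). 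Your explicit justification of the strict inequality $\beta<|T|_{\mathsf{WF}}$ by appending a top point is also a nice touch: the paper asserts this strictness without comment there and in Lemma \ref{embeddings}.
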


\begin{proof}
Suppose that $|T|_{\mathsf{WF}}=|U|_{\mathsf{WF}}$.

Let $T_{\Pi^1_1}$ be the set of $\Pi^1_1$ theorems of $T$. Let $\varphi\in T_{\Pi^1_1}$. By Theorem \ref{kleene-brouwer}, $\mathsf{ACA}_0 \vdash \varphi\leftrightarrow \mathsf{WF}(\alpha)$ for some primitive recursive $\alpha$. So $T\vdash \mathsf{WF}(\alpha)$. So $\alpha<|T|_{\mathsf{WF}}=|U|_{\mathsf{WF}}$. So there is some primitive recursive $\beta$ such that $\mathsf{otyp}(\alpha)\leq \mathsf{otyp}(\beta)$ and $U\vdash \mathsf{WF}(\beta)$. Now the sentence $\exists f \mathsf{Emb}(f,\alpha,\beta)$, which formalizes the claim that $\alpha$ embeds into $\beta$, is true $\Sigma^1_1$. Note that $U+\exists f \mathsf{Emb}(f,\alpha,\beta)  \vdash \mathsf{WF}(\alpha)$, so $U+\exists f \mathsf{Emb}(f,\alpha,\beta) \vdash \varphi$.

Likewise, let $U_{\Pi^1_1}$ be the set of $\Pi^1_1$ theorems of $U$. As above, for each $\psi\in U_{\Pi^1_1}$, we may find true $\Sigma^1_1$ sentences of the form $\exists f \mathsf{Emb}(f,\gamma,\delta)$ so that:
$$T + \exists f \mathsf{Emb}(f,\gamma,\delta) \vdash \psi.$$

We will enrich $U$ with all the sentences $\exists f \mathsf{Emb}(f,\alpha,\beta)$ and $\exists f \mathsf{Emb}(f,\gamma,\delta)$ that can be found as in the previous paragraphs. That is, we define the theory $\widehat{U}$ as follows: $\theta$ belongs to $\widehat{U}$ if and only if one of the following holds:
\begin{enumerate}
    \item $\theta$ belongs to $U$;
    \item $\theta$ has the form $\exists f \mathsf{Emb}(f,\alpha,\beta)$ where $\alpha$ and $\beta$ are primitive recursive and
$$ \exists \varphi\in T_{\Pi^1_1} \Big( \mathsf{ACA}_0\vdash \varphi \leftrightarrow\mathsf{WF}(\alpha) \text{ and }  U\vdash \mathsf{WF}(\beta) \text{ and } \mathsf{otyp}(\alpha)\leq\mathsf{otyp}(\beta) \Big);  $$
\item $\theta$ has the form $\exists f \mathsf{Emb}(f,\gamma,\delta)$ where $\gamma$ and $\delta$ are primitive recursive and
$$ \exists \psi\in U_{\Pi^1_1} \Big( \mathsf{ACA}_0\vdash \psi \leftrightarrow\mathsf{WF}(\gamma) \text{ and }  T\vdash \mathsf{WF}(\delta) \text{ and } \mathsf{otyp}(\gamma)\leq\mathsf{otyp}(\delta) \Big).  $$
\end{enumerate} 

We define the theory $\widehat{T}$ in the exact same manner except that we replace clause (1) above with the condition ``$\theta$ belongs to $T$.''

Note that it is immediate from the construction of $\widehat{T}$ and $\widehat{U}$---in particular, from the way the sentences $\exists f \mathsf{Emb}(f,\alpha,\beta)$ and $\exists f \mathsf{Emb}(f,\gamma,\delta)$ were selected---that both prove all of the $\Pi^1_1$ theorems of $T$ and all the $\Pi^1_1$ theorems of $U$.

\begin{claim}
$T\equiv \widehat{T}$ and $U\equiv \widehat{U}$.
\end{claim}

$\widehat{U}$ is an extension of $U$ by true $\Sigma^1_1$ sentences. This implies that $U\equiv \widehat{U}$, since $\equiv$ is good. Likewise, $T\equiv \widehat{T}$. 

\begin{claim}
$\widehat{T} \equiv \widehat{U}$.
\end{claim}

First we note that $\widehat{T} \equiv_{\Pi^1_1} \widehat{U}$. To see that $\widehat{T}\supseteq_{\Pi^1_1}\widehat{U}$, suppose $\widehat{U}\vdash \theta$ where $\theta$ is $\Pi^1_1$. Then $U+\sigma\vdash \theta$ where $\sigma$ is a conjunction of $\Sigma^1_1$ claims that were added to $U$ to get $\widehat{U}$. So $U\vdash \sigma\to\theta$. Note that $\sigma\to\theta$ is $\Pi^1_1$. But then $\widehat{T}\vdash \sigma\to\theta$, since we constructed $\widehat{T}$ so that it would prove all $\Pi^1_1$ theorems of $U$. Note that $\sigma$ is also a conjunction of $\Sigma^1_1$ claims that were added to $T$ to get $\widehat{T}$, whence $\widehat{T}\vdash \theta$. A symmetric argument shows that $\widehat{T}\subseteq_{\Pi^1_1}\widehat{U}$.

Since $\equiv$ is good, the claim follows as long as $\widehat{T}$ and $\widehat{U}$ are $\Sigma^1_1$-definable and $\Pi^1_1$-sound. 

To see that they are $\Pi^1_1$-sound: Suppose that $\widehat{T}\vdash \theta$, where $\theta$ is $\Pi^1_1$. Then $T+\sigma\vdash \theta$, where $\sigma$ is a conjunction of true $\Sigma^1_1$ claims. But $T+\sigma$ is $\Pi^1_1$-sound by Lemma \ref{lemma}, whence $\theta$ is true. Of course, a symmetric argument applies to $\widehat{U}$.

To see that they are $\Sigma^1_1$-definable: A sentence $\theta$ belongs to $\widehat{U}$ if and only if it satisfies any of clauses (1)--(3) above. Clause (1) is a $\Sigma^1_1$ condition since $U$ is $\Sigma^1_1$-definable. Clauses (2) and (3) are similar to each other. Let's look only at clause (2).

Having the syntactic form $\exists f \mathsf{Emb}(f,\alpha,\beta)$ for primitive recursive $\alpha$ and $\beta$ is arithmetic. $T_{\Pi^1_1}$ is a $\Sigma^1_1$-definable set, since $T$-provability is $\Sigma^1_1$. The first conjunct within the parentheses is $\Sigma^0_1$. The second conjunct is $\Sigma^1_1$ since $U$ is. The third conjunct $\mathsf{otyp}(\alpha)\leq\mathsf{otyp}(\beta)$ is formalized by the claim $\exists f \mathsf{Emb}(f,\alpha,\beta)$, which is also $\Sigma^1_1$.

The same observations show that $\widehat{T}$ is $\Sigma^1_1$-definable. This concludes the proof of the claim.

It immediately follows from the two claims  that $T\equiv \widehat{T}\equiv \widehat{U}\equiv U$, whence $T\equiv U$.
\end{proof}


\section{Ordinal Analysis as an Ordering}\label{main-section}

In this section we will characterize the ordering on theories induced by ordinal analysis. Recall that our second main theorem is that for $\Pi^1_1$-sound arithmetically definable $T$ and $U$ extending $\mathsf{ACA}_0:$
$$T\subseteq_{\Pi^1_1}^{\Sigma^1_1}U \Longleftrightarrow T\leq_{\mathsf{WF}} U  \Longleftrightarrow T\leq^{\Sigma^1_1}_{\mathsf{RFN_{\Pi^1_1}}} U.$$

In the proof we will once again make use of Theorem \ref{kleene-brouwer}, which says that well-foundedness is a universal $\Pi^1_1$ property. We also have a uniform version of Theorem \ref{kleene-brouwer}; see the proof of \cite{simpson2009subsystems} Lemma V.1.8 but appeal to Theorem V.1.7$'$ rather than Theorem V.1.7.

\begin{theorem}\label{kb-uniform}
For any $\Pi^1_1$ formula $\varphi(x)$, there is a primitive recursive family $\langle \beta_x \mid x\in\mathbb{N}\rangle$ of primitive recursive linear orders such that $\mathsf{ACA}_0\vdash \forall x\big( \varphi(x) \leftrightarrow \mathsf{WF}(\beta_x)\big)$.
\end{theorem}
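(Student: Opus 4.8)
The plan is to carry out, with a free number variable carried along, precisely the Kleene--Brouwer construction underlying Theorem~\ref{kleene-brouwer}, checking that each ingredient remains primitive recursive uniformly in that variable and that the key equivalences remain provable in $\mathsf{ACA}_0$. First I would pass to a uniform tree representation. By the version of the Kleene normal form theorem with a free variable --- this is exactly why one appeals to Theorem V.1.7$'$ of \cite{simpson2009subsystems} rather than to V.1.7 --- there is a primitive recursive family $\langle S_x \mid x\in\mathbb{N}\rangle$ of primitive recursive subtrees of $\mathbb{N}^{<\mathbb{N}}$ such that $\mathsf{ACA}_0\vdash\forall x\big(\varphi(x)\leftrightarrow S_x$ has no infinite path$\big)$. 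Concretely, writing $\varphi(x)$ in the normal form $\forall f\,\exists n\,\theta(x,f[n])$ with $\theta$ primitive recursive (where $f[n]$ codes $\langle f(0),\dots,f(n-1)\rangle$), one lets $S_x$ consist of those $\sigma$ with $\neg\theta(x,\sigma[n])$ for every $n\leq|\sigma|$; this is a primitive recursive tree, uniformly in $x$, whose infinite paths are exactly the functions witnessing $\neg\varphi(x)$.

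Next I would let $\beta_x$ be the Kleene--Brouwer ordering of $S_x$: for nodes $\sigma,\tau$ of $S_x$, declare $\sigma$ to precede $\tau$ iff $\tau$ is a proper initial segment of $\sigma$, or $\sigma$ and $\tau$ are incomparable and $\sigma$ takes the smaller value at the least coordinate where they disagree. Since ``$\sigma\in S_x$'' and this comparison are primitive recursive in $(x,\sigma,\tau)$, the sequence $\langle\beta_x\rangle$ is a primitive recursive family of primitive recursive linear orders (and $\mathsf{RCA}_0$ already proves each $\beta_x$ is linear; if one insists on field $\mathbb{N}$, recode trivially). It then remains to prove in $\mathsf{ACA}_0$ that $\forall x\big(\mathsf{WF}(\beta_x)\leftrightarrow S_x$ has no infinite path$\big)$, which is Lemmas V.1.3--V.1.4 of \cite{simpson2009subsystems} with the parameter $x$ threaded through. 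For one direction, an infinite path $f$ through $S_x$ yields, by arithmetical comprehension, the nonempty set $\{f[n] : n\in\mathbb{N}\}\subseteq\mathrm{field}(\beta_x)$, which has no $\beta_x$-least element because $f[n+1]$ precedes $f[n]$, so $\neg\mathsf{WF}(\beta_x)$. For the converse, given a nonempty $X\subseteq\mathrm{field}(\beta_x)$ with no $\beta_x$-least element, one builds an infinite path through $S_x$ by a recursion with an arithmetical clause, valid in $\mathsf{ACA}_0$: maintaining the invariant that $\{\tau\in X : \sigma\subseteq\tau\}$ has no $\beta_x$-least element, a short argument with the Kleene--Brouwer comparison shows that some length-$(|\sigma|+1)$ extension of $\sigma$ in $S_x$ inherits this invariant, and one picks the least such. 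Because all the defining formulas are arithmetical and do not mention $x$, a single derivation handles every $x$ at once, and chaining the two equivalences gives $\mathsf{ACA}_0\vdash\forall x\big(\varphi(x)\leftrightarrow\mathsf{WF}(\beta_x)\big)$.

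I do not expect a genuine obstacle here: the whole argument is the proof of Theorem~\ref{kleene-brouwer} relativized to a number parameter. The two points meriting a moment's attention are (i) securing the $\Pi^1_1$ normal form uniformly and provably in $\mathsf{ACA}_0$, which is exactly what Theorem V.1.7$'$ delivers, and (ii) confirming that every appeal to comprehension in the Kleene--Brouwer equivalence stays arithmetical and uses an $x$-independent formula, so that the universal quantifier over $x$ can be introduced only at the very end.
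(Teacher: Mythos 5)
Your proposal is correct and follows exactly the route the paper indicates: the paper gives no proof of its own but cites Simpson's Lemma V.1.8 with Theorem V.1.7$'$ in place of V.1.7, i.e., the uniform normal form plus the Kleene--Brouwer construction with the number parameter threaded through, which is precisely what you carry out. Your fleshed-out details (uniform primitive recursive trees, the $x$-independent arithmetical comprehension instances, and introducing the universal quantifier over $x$ at the end) are the right ones and raise no gap.
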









Before continuing, let's record a small lemma that we will use repeatedly.\footnote{This lemma is actually implicit in the proof of Theorem \ref{main-thm}. It did not make sense to isolate it there, though, because we needed to make use of the specific $\Sigma^1_1$ sentence used to witness $\vdash^{\Sigma^1_1}$.}

\begin{lemma}\label{embeddings}
If $\alpha$ is a primitive recursive well-ordering and $\mathsf{otyp}(\alpha)<|T|_{\mathsf{WF}}$, then $T\vdash^{\Sigma^1_1}\mathsf{WF}(\alpha)$.
\end{lemma}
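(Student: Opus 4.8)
The plan is to unpack the definition of the proof-theoretic ordinal and then feed the resulting order-preserving map through the oracle. Suppose $\alpha$ is a primitive recursive well-ordering with $\mathsf{otyp}(\alpha)<|T|_{\mathsf{WF}}$. By the definition of $|T|_{\mathsf{WF}}$ as a supremum of order types of primitive recursive well-orderings $\beta$ with $T\vdash\mathsf{WF}(\beta)$, the strict inequality $\mathsf{otyp}(\alpha)<|T|_{\mathsf{WF}}$ gives us some primitive recursive well-ordering $\beta$ with $\mathsf{otyp}(\alpha)\leq\mathsf{otyp}(\beta)$ and $T\vdash\mathsf{WF}(\beta)$. (One should be slightly careful here: the supremum could in principle be attained without any single $\beta$ having order type exactly $\geq\mathsf{otyp}(\alpha)$ only if $\mathsf{otyp}(\alpha)$ equals the supremum, which is excluded by strictness; so such a $\beta$ genuinely exists.)

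Next I would produce the witnessing $\Sigma^1_1$ sentence. Since $\mathsf{otyp}(\alpha)\leq\mathsf{otyp}(\beta)$ and both are well-orderings, there is an order-preserving embedding $f\colon\alpha\to\beta$, so the sentence $\exists f\,\mathsf{Emb}(f,\alpha,\beta)$ is true; and it is $\Sigma^1_1$ since $\mathsf{Emb}(f,\alpha,\beta)$ is arithmetic (in fact the relevant clauses—$f$ is a function, $f$ respects the primitive recursive orderings—are $\Pi^0_1$ or $\Delta^0_1$). It remains to check $T+\exists f\,\mathsf{Emb}(f,\alpha,\beta)\vdash\mathsf{WF}(\alpha)$. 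This is the routine fact, provable already in $\mathsf{ACA}_0$, that the domain of an order-embedding into a well-ordering is well-ordered: given an order-embedding $f$ of $\alpha$ into $\beta$, an infinite $\prec_\alpha$-descending sequence would map to an infinite $\prec_\beta$-descending sequence, contradicting $\mathsf{WF}(\beta)$; dually in the formulation with minimal elements of nonempty sets, one pulls back along $f$. Since $T\vdash\mathsf{WF}(\beta)$, we get $T+\exists f\,\mathsf{Emb}(f,\alpha,\beta)\vdash\mathsf{WF}(\alpha)$, which is exactly the statement that $T\vdash^{\Sigma^1_1}\mathsf{WF}(\alpha)$.

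I do not anticipate a serious obstacle; this lemma is essentially a bookkeeping extraction of the argument already carried out inside the proof of Theorem \ref{main-thm}, where the sentences $\exists f\,\mathsf{Emb}(f,\alpha,\beta)$ were used for precisely this purpose. The only points requiring care are (a) the strictness of the inequality, used to guarantee an actual witnessing $\beta$ rather than merely a supremum, and (b) confirming that $\mathsf{Emb}$ is arithmetic so that the witness has the right complexity $\Sigma^1_1$ (here one wants $\Gamma=\Sigma^1_1$ in the definition of $\vdash^{\Sigma^1_1}$, and indeed the witness is even $\Sigma^0_2$ or simpler, hence certainly $\Sigma^1_1$). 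Everything else is the standard $\mathsf{ACA}_0$-verifiable transfer of well-foundedness along embeddings.
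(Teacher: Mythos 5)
Your proposal is correct and follows essentially the same route as the paper's proof: extract a primitive recursive $\beta$ with $\mathsf{otyp}(\alpha)\leq\mathsf{otyp}(\beta)$ and $T\vdash\mathsf{WF}(\beta)$ from the strict inequality, then use the true $\Sigma^1_1$ sentence $\exists f\,\mathsf{Emb}(f,\alpha,\beta)$ as the oracle witness, noting $T+\exists f\,\mathsf{Emb}(f,\alpha,\beta)\vdash\mathsf{WF}(\alpha)$. The extra checks you include (strictness of the inequality, arithmeticity of $\mathsf{Emb}$, the $\mathsf{ACA}_0$-verifiable transfer of well-foundedness along embeddings) are exactly the details the paper leaves implicit.
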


\begin{proof}
Suppose that $\mathsf{otyp}(\alpha)<|T|_{\mathsf{WF}}$. Then there is some primitive recursive $\beta$ such that $\mathsf{otyp}(\alpha)\leq\mathsf{otyp}(\beta)$ and $T\vdash \mathsf{WF}(\beta)$. The statement $\exists f \mathsf{Emb}(f,\alpha,\beta)$, which says that $\alpha$ embeds into $\beta$, is true $\Sigma^1_1$. Since $T+\exists f \mathsf{Emb}(f,\alpha,\beta)\vdash \mathsf{WF}(\alpha)$, we infer that $T\vdash^{\Sigma^1_1}\mathsf{WF}(\alpha)$.
\end{proof}

\subsection{The First Equivalence}

The first of the two equivalences in our second main theorem has a straightforward proof.

\begin{lemma}\label{easy}
For all $\Sigma^1_1$-definable and $\Pi^1_1$-sound extensions $T$ and $U$ of $\mathsf{ACA}_0$:
$$T\subseteq_{\Pi^1_1}^{\Sigma^1_1} U \Longleftrightarrow |T|_{\mathsf{WF}} \leq |U|_{\mathsf{WF}}.$$
\end{lemma}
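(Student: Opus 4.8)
The plan is to prove the two implications separately, drawing on four facts from the excerpt: the Kleene--Brouwer theorem (Theorem \ref{kleene-brouwer}), the $\Sigma^1_1$-definable version of Kreisel's theorem (Proposition \ref{prop}), the embeddings lemma (Lemma \ref{embeddings}), and the soundness lemma (Lemma \ref{lemma}).

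For the direction $T\subseteq_{\Pi^1_1}^{\Sigma^1_1}U \Rightarrow |T|_{\mathsf{WF}}\leq|U|_{\mathsf{WF}}$, I would argue as follows. It suffices to show that $\mathsf{otyp}(\alpha)\leq|U|_{\mathsf{WF}}$ for every primitive recursive well-ordering $\alpha$ with $T\vdash\mathsf{WF}(\alpha)$. Since $\mathsf{WF}(\alpha)$ is $\Pi^1_1$ and $T\vdash\mathsf{WF}(\alpha)$ trivially gives $T\vdash^{\Sigma^1_1}\mathsf{WF}(\alpha)$, the hypothesis yields $U\vdash^{\Sigma^1_1}\mathsf{WF}(\alpha)$, i.e.\ $U+\psi\vdash\mathsf{WF}(\alpha)$ for some true $\Sigma^1_1$ sentence $\psi$. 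By Lemma \ref{lemma}, $U+\psi$ is $\Pi^1_1$-sound, so $\mathsf{otyp}(\alpha)\leq|U+\psi|_{\mathsf{WF}}$; and by Proposition \ref{prop} (which needs only that $U$ is a $\Pi^1_1$-sound extension of $\mathsf{ACA}_0$), $|U+\psi|_{\mathsf{WF}}=|U|_{\mathsf{WF}}$. Taking the supremum over all such $\alpha$ gives $|T|_{\mathsf{WF}}\leq|U|_{\mathsf{WF}}$.

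For the direction $|T|_{\mathsf{WF}}\leq|U|_{\mathsf{WF}} \Rightarrow T\subseteq_{\Pi^1_1}^{\Sigma^1_1}U$, fix $\varphi\in\Pi^1_1$ with $T\vdash^{\Sigma^1_1}\varphi$, say $T+\psi\vdash\varphi$ with $\psi$ true $\Sigma^1_1$. By Lemma \ref{lemma}, $T+\psi$ is $\Pi^1_1$-sound, so $\varphi$ is true. Applying Theorem \ref{kleene-brouwer}, choose a primitive recursive presentation $\alpha$ of a linear order with $\mathsf{ACA}_0\vdash\varphi\leftrightarrow\mathsf{WF}(\alpha)$; since $\varphi$ is true, $\alpha$ is in fact a well-ordering, and $T+\psi\vdash\mathsf{WF}(\alpha)$. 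Hence $\mathsf{otyp}(\alpha)<|T+\psi|_{\mathsf{WF}}=|T|_{\mathsf{WF}}\leq|U|_{\mathsf{WF}}$, where the equality is Proposition \ref{prop} again. By Lemma \ref{embeddings}, $U\vdash^{\Sigma^1_1}\mathsf{WF}(\alpha)$, i.e.\ $U+\chi\vdash\mathsf{WF}(\alpha)$ for some true $\Sigma^1_1$ sentence $\chi$; and since $U+\chi$ extends $\mathsf{ACA}_0\vdash\mathsf{WF}(\alpha)\to\varphi$, we get $U+\chi\vdash\varphi$, so $U\vdash^{\Sigma^1_1}\varphi$.

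The argument is essentially bookkeeping with the definitions of $\vdash^{\Sigma^1_1}$ and $|\cdot|_{\mathsf{WF}}$, so I do not anticipate a genuine obstacle; the one point needing a moment's care is the \emph{strict} inequality $\mathsf{otyp}(\alpha)<|T|_{\mathsf{WF}}$ in the second direction, which is what licenses the appeal to Lemma \ref{embeddings} (that lemma has a strict hypothesis). This holds because $\mathsf{ACA}_0\vdash\mathsf{WF}(\alpha)\to\mathsf{WF}(\alpha+1)$, so from $T+\psi\vdash\mathsf{WF}(\alpha)$ we also get $T+\psi\vdash\mathsf{WF}(\alpha+1)$, whence $\mathsf{otyp}(\alpha)<\mathsf{otyp}(\alpha+1)\leq|T+\psi|_{\mathsf{WF}}$; in other words $|T|_{\mathsf{WF}}$ is a limit ordinal and is never attained by a $T$-provably well-founded order.
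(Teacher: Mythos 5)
Your proof is correct and follows essentially the same route as the paper: both directions run through Theorem \ref{kleene-brouwer}, Proposition \ref{prop}, and Lemma \ref{embeddings}, with only cosmetic differences (you code $\varphi$ itself rather than $\psi\to\varphi$ as a well-foundedness statement, and you work with $T+\psi$ where the paper passes to $T\vdash\psi\to\varphi$). Your explicit justification of the strict inequality $\mathsf{otyp}(\alpha)<|T|_{\mathsf{WF}}$ via $\alpha+1$ is a point the paper leaves tacit, and it is a welcome addition.
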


\begin{proof}
\emph{Left to right:} Assume that $T\subseteq_{\Pi^1_1}^{\Sigma^1_1} U$. Let $\mathsf{otyp}(\alpha) <|T|_{\mathsf{WF}}$ for some primitive recursive $\alpha$. By Lemma \ref{embeddings}, $T\vdash^{\Sigma^1_1}\mathsf{WF}(\alpha)$. Since $T\subseteq_{\Pi^1_1}^{\Sigma^1_1} U$, it follows that $U\vdash^{\Sigma^1_1}\mathsf{WF}(\alpha)$. By Proposition \ref{prop}, we infer that $U\vdash\mathsf{WF}(\alpha)$, whence $\mathsf{otyp}(\alpha) <|U|_{\mathsf{WF}}$.


\emph{Right to left:} Assume that $|T|_{\mathsf{WF}} \leq |U|_{\mathsf{WF}}$. Let $T\vdash^{\Sigma^1_1}\varphi$ for $\varphi\in\Pi^1_1$. That is, for some true $\Sigma^1_1$ $\psi$, $T+ \psi \vdash \varphi$. Then $T\vdash \psi \to \varphi$; note that $\psi\to\varphi$ is a $\Pi^1_1$ sentence.

Thus, by Theorem \ref{kleene-brouwer}, we have $\mathsf{ACA}_0\vdash (\psi\to\varphi) \leftrightarrow \mathsf{WF}(\alpha)$ for some primitive recursive $\alpha$. So $T\vdash\mathsf{WF}(\alpha)$. Thus, $\mathsf{otyp}(\alpha)<|T|_{\mathsf{WF}}$. By the assumption, $\mathsf{otyp}(\alpha)<|U|_{\mathsf{WF}}$. By Lemma \ref{embeddings}, $U\vdash^{\Sigma^1_1}\mathsf{WF}(\alpha)$, whence $U\vdash^{\Sigma^1_1}\psi\to\varphi$. Thus, $U+\psi\vdash^{\Sigma^1_1} \varphi$ and finally $U\vdash^{\Sigma^1_1}\varphi$.
\end{proof}

\subsection{The Second Equivalence}

In this subsection we will prove the second equivalence. After the author posted a preprint \cite{walsh2022robust} of this article online, Fedor Pakhomov found an alternative proof of the right-to-left direction. Pakhomov's proof is similar in some ways to the original proof but simpler technically since it avoids a detour through $\Sigma^1_1\text{-}\mathsf{AC}_0$. We present Pakhomov's proof here, with his permission.

Before presenting the proof, we will cover two small lemmas. The first concerns the relationship between the $\Sigma^1_1$ formulas and the related class of essentially $\Sigma^1_1$ formulas.

\begin{definition}
The class of \emph{essentially $\Sigma^1_1$ formulas} is the smallest class of formulas that contains all arithmetical formulas and is closed under conjunction, disjunction, universal number quantification, existential number quantification, and existential set quantification.
\end{definition}

\begin{lemma}\label{essentially}
For any essentially $\Sigma^1_1$ formula $\varphi$, there is a $\Sigma^1_1$ formula $\varphi'$ with the same free variables, such that:
\begin{enumerate}
    \item $\Sigma^1_1\text{-}\mathsf{AC}_0\vdash \varphi \to \varphi'$
    \item $\mathsf{ACA}_0\vdash \varphi'\to\varphi$
\end{enumerate}
\end{lemma}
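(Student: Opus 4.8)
The plan is to prove this by induction on the construction of the essentially $\Sigma^1_1$ formula $\varphi$. The base case is when $\varphi$ is arithmetical: here we may simply take $\varphi' := \varphi$, since an arithmetical formula is already $\Sigma^1_1$ (prefix it with a vacuous set quantifier if one wants it literally in the syntactic form $\exists X\,\theta$ with $\theta$ arithmetical), and both implications are trivially provable even in $\mathsf{ACA}_0$. For the inductive step we are given subformulas with their $\Sigma^1_1$ equivalents already in hand and must handle each of the five closure operations.

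The operations that are essentially built into the definition of $\Sigma^1_1$ are the easy ones. For existential number quantification, existential set quantification, and disjunction, the class of (genuine) $\Sigma^1_1$ formulas is already closed up to provable equivalence in $\mathsf{ACA}_0$ alone — one just pulls the leading existential set quantifiers out front, using that $\exists X\exists Y\,\theta$ is equivalent to $\exists Z\,\theta[(Z)_0/X,(Z)_1/Y]$ via a pairing of sets, and that $\exists X\theta_0 \vee \exists Y\theta_1$ is equivalent to $\exists Z\,((Z)_0 = \text{witness flag} \wedge \dots)$; these manipulations only need $\mathsf{ACA}_0$, so both (1) and (2) hold with $\Sigma^1_1\text{-}\mathsf{AC}_0$ nowhere used. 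Universal number quantification is the one case that genuinely requires choice: given $\varphi(n,\bar z)$ with $\Sigma^1_1$ equivalent $\exists X\,\theta(n,X,\bar z)$, the formula $\forall n\,\varphi$ is equivalent to $\forall n\,\exists X\,\theta$, and $\Sigma^1_1\text{-}\mathsf{AC}_0$ is exactly the principle that lets us collapse $\forall n\exists X\,\theta(n,X)$ to $\exists Y\,\forall n\,\theta(n,(Y)_n)$, which is $\Sigma^1_1$; this gives implication (1), while implication (2) — that $\exists Y\forall n\,\theta(n,(Y)_n)$ implies $\forall n\exists X\,\theta$ — is immediate in $\mathsf{ACA}_0$. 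Conjunction is handled like disjunction but slightly more carefully: $\exists X\theta_0 \wedge \exists Y\theta_1$ is already equivalent over $\mathsf{ACA}_0$ to $\exists Z\,(\theta_0[(Z)_0/X]\wedge\theta_1[(Z)_1/Y])$.

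The bookkeeping point to be careful about is the asymmetry in the statement: implication (2) is always to be proved in $\mathsf{ACA}_0$, while implication (1) may use $\Sigma^1_1\text{-}\mathsf{AC}_0$. So in the inductive step one must check that the "hard" direction of each equivalence-preserving transformation lands on the correct side. For the set-pairing manipulations this is symmetric and causes no trouble; the only place where $\Sigma^1_1\text{-}\mathsf{AC}_0$ is invoked is the forward direction of the universal-number-quantifier case, and one should confirm that in every other case the reverse direction (toward $\varphi$) does not secretly need choice — it does not, since going from the pulled-out $\Sigma^1_1$ form back to the essentially-$\Sigma^1_1$ form is always just specialization or witness-extraction, valid in $\mathsf{ACA}_0$. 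The main obstacle, such as it is, is purely notational: carrying the free-variable lists and the set-coding apparatus through the induction cleanly, and making sure the induction hypothesis is applied to the correct immediate subformulas (e.g. for $\forall n\,\psi$ one applies it to $\psi$ treating $n$ as a free variable). I would organize the proof as a short case analysis with the universal-number-quantifier case flagged as the only one using $\Sigma^1_1\text{-}\mathsf{AC}_0$.
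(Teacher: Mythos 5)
Your proposal is correct: the paper does not prove this lemma itself but cites Simpson's Lemma VIII.6.2 of \emph{Subsystems of Second Order Arithmetic}, and your induction on the build-up of essentially $\Sigma^1_1$ formulas, with $\Sigma^1_1\text{-}\mathsf{AC}_0$ invoked only in the forward direction of the universal number quantifier case (collapsing $\forall n\,\exists X\,\theta$ to $\exists Y\,\forall n\,\theta(n,(Y)_n)$) and all other cases plus every reverse implication handled in $\mathsf{ACA}_0$, is exactly that standard cited argument. No gaps.
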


For a proof of Lemma \ref{essentially}, we refer the reader to [Simpson \cite{simpson2009subsystems} Lemma VIII.6.2]. The next lemma we cover concerns the relationship between uniform $\Pi^1_1$-reflection and correctness about well-foundedness within $\mathsf{ACA}_0$.

\begin{lemma}\label{pwf}
$\mathsf{ACA}_0\vdash \forall \gamma \in \mathsf{PrimRec}\Big( \mathsf{Pr}_T\big(\mathsf{WF}(\gamma)\big)\to\mathsf{WF}(\gamma)\Big) \to \mathsf{RFN}_{\Pi^1_1}(T)$.
\end{lemma}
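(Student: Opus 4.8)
The idea is to reduce $\Pi^1_1$-soundness to correctness about well-foundedness of primitive recursive linear orders, using the \emph{uniform} Kleene--Brouwer theorem (Theorem \ref{kb-uniform}) applied to the partial truth predicate. Recall that a $\Pi^1_1$ sentence is, up to contraction of universal set quantifiers, of the form $\forall X\,\theta(X)$ with $\theta$ arithmetical, and arithmetical satisfaction is itself arithmetical; hence the predicate $\mathsf{True}_{\Pi^1_1}(x)$ is (provably equivalent in $\mathsf{ACA}_0$ to) a genuine $\Pi^1_1$ formula of $x$. So Theorem \ref{kb-uniform}, applied with $\varphi(x):=\mathsf{True}_{\Pi^1_1}(x)$, furnishes a primitive recursive family $\langle \beta_x \mid x\in\mathbb N\rangle$ of primitive recursive linear orders --- coded by a primitive recursive function $x\mapsto\beta_x$, so that $\mathsf{ACA}_0$ verifies $\beta_x\in\mathsf{PrimRec}$ for all $x$ --- with
$$\mathsf{ACA}_0\vdash \forall x\big(\mathsf{True}_{\Pi^1_1}(x)\leftrightarrow\mathsf{WF}(\beta_x)\big).$$

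\textbf{Setting up the formalized ingredients.} I would combine the displayed equivalence with the standard uniform Tarski disquotation for the partial truth predicate, namely that $\mathsf{ACA}_0$ proves $\forall\varphi\in\Pi^1_1\,(\varphi\leftrightarrow\mathsf{True}_{\Pi^1_1}(\varphi))$ (identifying a $\Pi^1_1$ sentence with its code). Chaining these, $\mathsf{ACA}_0\vdash \forall\varphi\in\Pi^1_1\,(\varphi\leftrightarrow\mathsf{WF}(\beta_\varphi))$. By provable $\Sigma_1$-completeness of $\mathsf{ACA}_0$ (that whenever $\mathsf{ACA}_0$ proves a sentence it proves that it proves it), together with the fact that $T$ extends $\mathsf{ACA}_0$ and $\mathsf{Pr}_T$ satisfies the usual derivability conditions, one obtains
$$\mathsf{ACA}_0\vdash \forall\varphi\in\Pi^1_1\;\mathsf{Pr}_T\big(\varphi\to\mathsf{WF}(\beta_\varphi)\big)
\quad\text{and}\quad
\mathsf{ACA}_0\vdash \forall\varphi\in\Pi^1_1\,\big(\mathsf{WF}(\beta_\varphi)\to\mathsf{True}_{\Pi^1_1}(\varphi)\big).$$

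\textbf{The main argument.} Now I would argue inside $\mathsf{ACA}_0$. Assume the hypothesis $\forall\gamma\in\mathsf{PrimRec}\,\big(\mathsf{Pr}_T(\mathsf{WF}(\gamma))\to\mathsf{WF}(\gamma)\big)$, and let $\varphi\in\Pi^1_1$ with $\mathsf{Pr}_T(\varphi)$; the goal is $\mathsf{True}_{\Pi^1_1}(\varphi)$. From $\mathsf{Pr}_T(\varphi)$ and $\mathsf{Pr}_T(\varphi\to\mathsf{WF}(\beta_\varphi))$, formalized modus ponens gives $\mathsf{Pr}_T(\mathsf{WF}(\beta_\varphi))$. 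Since $\beta_\varphi$ is a code of a primitive recursive linear order, the hypothesis applied to $\gamma:=\beta_\varphi$ yields $\mathsf{WF}(\beta_\varphi)$, and then the second displayed implication gives $\mathsf{True}_{\Pi^1_1}(\varphi)$. As $\varphi$ was arbitrary, this is exactly $\mathsf{RFN}_{\Pi^1_1}(T)$, completing the proof.

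\textbf{Main obstacle.} The only delicate points are bookkeeping with the partial truth predicate: checking that $\mathsf{True}_{\Pi^1_1}$ can be taken literally $\Pi^1_1$ so that Theorem \ref{kb-uniform} applies, that the family $\langle\beta_x\rangle$ is uniformly primitive recursive (so ``$\beta_\varphi\in\mathsf{PrimRec}$'' is internally available), and that the Tarski disquotation together with the derivability-condition manipulations go through \emph{uniformly in the code $\varphi$} and provably in $\mathsf{ACA}_0$. These are all standard facts about partial truth definitions and formalized provability, but they carry the real content of the formalization.
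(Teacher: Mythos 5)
Your proof is correct, and it rests on the same engine as the paper's --- Theorem \ref{kb-uniform} plus the hypothesis that $T$-provably well-founded primitive recursive orders are well-founded --- but you deploy it differently. The paper first replaces the single sentence $\mathsf{RFN}_{\Pi^1_1}(T)$ by a single-variable schema formalization and then, for \emph{each} $\Pi^1_1$ formula $\varphi(x)$, applies Theorem \ref{kb-uniform} to $\varphi(x)$ itself, using the internalized half of the equivalence to get $\mathsf{Pr}_T(\mathsf{WF}(\beta_n))$ and the external half to recover $\varphi(n)$. You instead apply Theorem \ref{kb-uniform} once, to the $\Pi^1_1$ truth predicate $\mathsf{True}_{\Pi^1_1}(x)$, and prove the single-sentence $\mathsf{RFN}_{\Pi^1_1}(T)$ directly. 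What your route buys is that you never need to justify the passage between the truth-predicate formulation and the schema (a step the paper takes without comment); what it costs is that the formalization burden reappears as the uniform fact $\mathsf{ACA}_0\vdash \forall\varphi\in\Pi^1_1\,\mathsf{Pr}_T\big(\varphi\to\mathsf{WF}(\beta_\varphi)\big)$, i.e.\ formalized uniform Tarski disquotation together with necessitation of the Kleene--Brouwer equivalence inside $\mathsf{Pr}_T$ --- which you correctly flag as the real content, and which is of the same nature as what the paper's proof needs when it infers, inside $\mathsf{ACA}_0$, that $T\vdash\varphi(n)\leftrightarrow\mathsf{WF}(\beta_n)$. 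One cosmetic point: your intermediate display $\mathsf{ACA}_0\vdash\forall\varphi\in\Pi^1_1\,(\varphi\leftrightarrow\mathsf{WF}(\beta_\varphi))$ conflates use and mention (the quantified $\varphi$ occurs in sentence position); it should be read schematically or with $\mathsf{True}_{\Pi^1_1}(\varphi)$ on the left, but this is harmless since your main argument only uses the two well-formed displayed facts.
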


\begin{proof}
We work with a single-variable schema formalization of $\mathsf{RFN}_{\Pi^1_1}(T)$. That is, it suffices to show that $\mathsf{ACA}_0 + \forall \gamma \in \mathsf{PrimRec}\Big( \mathsf{Pr}_T\big(\mathsf{WF}(\gamma)\big)\to\mathsf{WF}(\gamma)\Big)$ proves each instance of the following schema:
$$\forall x \Big( \mathsf{Pr}_T\big(\varphi({x})\big) \to \varphi({x})\Big) \quad \text{ for $\varphi({x}) \in \Pi^1_1$.}$$
Let $\varphi(x)$ be a $\Pi^1_1$ formula. \textbf{\emph{We reason in}} $$\mathsf{ACA}_0 + \forall \gamma \in \mathsf{PrimRec}\Big( \mathsf{Pr}_T\big(\mathsf{WF}(\gamma)\big)\to \mathsf{WF}(\gamma)\Big):$$ Let $n$ be such that $T$ proves $\varphi(n)$. From Theorem \ref{kb-uniform}, we infer that $T\vdash \varphi(n) \leftrightarrow \mathsf{WF}(\beta_n)$. So, by our assumption that all $T$-provably well-founded primitive recursive linear orders are well-founded, $\beta_n$ is well-founded.

Now by Theorem \ref{kb-uniform}, we also infer that $\varphi(n)$ if and only if $\mathsf{WF}(\beta_n)$. So $\varphi(n)$.
\end{proof}

Now we are ready to present the proof of the second equivalence.

\begin{lemma}\label{hard}
For all arithmetically definable $\Pi^1_1$-sound $T$ and $U$ extending $\mathsf{ACA}_0$:
$$ T\leq_{\mathsf{WF}}U \Longleftrightarrow T\leq_{\mathsf{RFN}_{\Pi^1_1}}^{\Sigma^1_1}U.$$
\end{lemma}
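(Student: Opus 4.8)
First I would record a reformulation obtained from Lemma~\ref{pwf} together with its trivial converse: for any theory $S$, $\mathsf{ACA}_0 \vdash \mathsf{RFN}_{\Pi^1_1}(S) \leftrightarrow \Phi_S$, where $\Phi_S := \forall\gamma\in\mathsf{PrimRec}\big(\mathsf{Pr}_S(\mathsf{WF}(\gamma))\to\mathsf{WF}(\gamma)\big)$. Throughout, I would work with $\Phi_S$ in place of $\mathsf{RFN}_{\Pi^1_1}(S)$.

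For the direction $T\leq_{\mathsf{WF}}U \Rightarrow T\leq^{\Sigma^1_1}_{\mathsf{RFN}_{\Pi^1_1}}U$: since $|T|_{\mathsf{WF}}\leq|U|_{\mathsf{WF}}$, for every primitive recursive $\gamma$ with $T\vdash\mathsf{WF}(\gamma)$ there is a primitive recursive $\delta$, a $U$-proof of $\mathsf{WF}(\delta)$, and an $f$ with $\mathsf{Emb}(f,\gamma,\delta)$. Package all these choices into a single set and let $\psi$ be the sentence asserting that such a packaging exists. This is genuinely $\Sigma^1_1$ (it is $\exists X$ of an arithmetical matrix, since $T$- and $U$-provability and the embedding relation are arithmetical) and it is true. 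Then I reason inside $\mathsf{ACA}_0+\psi+\mathsf{RFN}_{\Pi^1_1}(U)$: given primitive recursive $\gamma$ with $\mathsf{Pr}_T(\mathsf{WF}(\gamma))$, extract from $\psi$ some $\delta$, a $U$-proof of $\mathsf{WF}(\delta)$, and an embedding $\gamma\hookrightarrow\delta$; apply $\mathsf{RFN}_{\Pi^1_1}(U)$ to the $\Pi^1_1$ sentence $\mathsf{WF}(\delta)$ to obtain $\mathsf{WF}(\delta)$; and conclude $\mathsf{WF}(\gamma)$ since $\gamma$ embeds into a well-order. This establishes $\Phi_T$, hence $\mathsf{RFN}_{\Pi^1_1}(T)$, so $\mathsf{ACA}_0\vdash^{\Sigma^1_1}\mathsf{RFN}_{\Pi^1_1}(U)\to\mathsf{RFN}_{\Pi^1_1}(T)$.

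The reverse direction $T\leq^{\Sigma^1_1}_{\mathsf{RFN}_{\Pi^1_1}}U \Rightarrow T\leq_{\mathsf{WF}}U$ is the crux. Fix a true $\Sigma^1_1$ sentence $\psi$ with $\mathsf{ACA}_0+\psi+\mathsf{RFN}_{\Pi^1_1}(U)\vdash\mathsf{RFN}_{\Pi^1_1}(T)$. By Lemma~\ref{easy} it suffices to show $T\subseteq^{\Sigma^1_1}_{\Pi^1_1}U$, so suppose $T\vdash^{\Sigma^1_1}\varphi$ with $\varphi\in\Pi^1_1$; absorbing the witnessing $\Sigma^1_1$ sentence into $\varphi$ we may assume $T\vdash\varphi$. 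Combining the hypothesis with the reflection instance $\mathsf{RFN}_{\Pi^1_1}(T)\vdash(\mathsf{Pr}_T(\varphi)\to\varphi)$ and the true $\Sigma^1_1$ fact $\mathsf{Pr}_T(\varphi)$ gives $\mathsf{ACA}_0+\psi'+\mathsf{RFN}_{\Pi^1_1}(U)\vdash\varphi$ for a true $\Sigma^1_1$ sentence $\psi'$, i.e.\ $\mathsf{ACA}_0+\psi'\vdash\Phi_U\to\varphi$. The plan is then to use the uniform Kleene--Brouwer theorem (Theorem~\ref{kb-uniform}) to present $\Phi_U$ and $\Phi_T$ as $\mathsf{WF}(\hat\beta_U)$ and $\mathsf{WF}(\hat\beta_T)$ for canonical primitive recursive well-orders, pass to $U^\dagger:=U+\psi'$ (again arithmetically definable, $\Pi^1_1$-sound, and with $|U^\dagger|_{\mathsf{WF}}=|U|_{\mathsf{WF}}$ by Proposition~\ref{prop}), and run a Gödel/L\"ob-style impossibility argument: were $|T|_{\mathsf{WF}}>|U|_{\mathsf{WF}}$, the hypothesis together with Kreisel's theorem would force $U^\dagger$ to prove (over a $\Sigma^1_1$ oracle) its own $\Pi^1_1$-reflection, a contradiction. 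The main obstacle — and the reason the naive argument fails — is that one cannot simply bound the order type of the Kleene--Brouwer ordering of $\varphi$ by $|U|_{\mathsf{WF}}$, since $\mathsf{ACA}_0+\mathsf{RFN}_{\Pi^1_1}(U)$ has strictly larger proof-theoretic ordinal than $U$; so the argument must genuinely exploit that the hypothesis delivers the full principle $\mathsf{RFN}_{\Pi^1_1}(T)$ (equivalently $\Phi_T$, equivalently well-foundedness of a single $\hat\beta_T$), not merely $\mathsf{WF}$ of individual orderings. Finally, Lemma~\ref{essentially} is needed to treat the case where $T$ and $U$ are arithmetically but not computably axiomatized, so that $\mathsf{Pr}_T,\mathsf{Pr}_U$ are general arithmetical formulas and $\Phi_T,\Phi_U$ (and their negations) are only \emph{essentially} $\Pi^1_1$ ($\Sigma^1_1$); following Pakhomov, this is handled while staying within $\mathsf{ACA}_0$, avoiding the detour through $\Sigma^1_1\text{-}\mathsf{AC}_0$ of the original argument.
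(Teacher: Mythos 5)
Your left-to-right direction is essentially correct, and it is in fact a mild simplification of the paper's argument: by packaging the $U$-proofs and the embeddings into a single set you obtain a genuinely $\Sigma^1_1$ oracle sentence directly, whereas the paper works with the sentence $\theta:=\forall\gamma\in\mathsf{PrimRec}\big(\mathsf{Pr}_T(\mathsf{WF}(\gamma))\to\mathsf{Pr}^{\Sigma^1_1}_U(\mathsf{WF}(\gamma))\big)$, which is only \emph{essentially} $\Sigma^1_1$, and must then invoke the $\mathsf{ACA}_0$-provable form of Kreisel's theorem (Proposition \ref{prop}) and Lemma \ref{essentially} to replace $\theta$ by a true $\Sigma^1_1$ sentence. (Your closing remark that Lemma \ref{essentially} is ``needed'' for arithmetically axiomatized $T,U$ is therefore not accurate for your own argument, and in the paper it is needed because of the positive set quantifier occurring under a number quantifier in $\theta$, not because of the complexity of the axiomatizations.)

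The right-to-left direction, however, is a genuine gap: you yourself call it the crux, and what you offer is only a ``plan'' together with a correct diagnosis of why the naive ordinal-bounding attempt fails; no argument is actually carried out, and your intermediate step $\mathsf{ACA}_0+\psi'\vdash\mathsf{RFN}_{\Pi^1_1}(U)\to\varphi$ cannot be converted into $U\vdash^{\Sigma^1_1}\varphi$, precisely because $\mathsf{RFN}_{\Pi^1_1}(U)$ is not available to $U$ even relative to a $\Sigma^1_1$ oracle (compare Proposition \ref{unbound-aca}). The missing idea is to argue by contradiction and to aim the packaging trick in the opposite direction: assume $U<_{\mathsf{WF}}T$, fix a primitive recursive $\alpha$ with $\mathsf{otyp}(\alpha)\geq|U|_{\mathsf{WF}}$ and $T\vdash\mathsf{WF}(\alpha)$, and let $F$ be the true $\Sigma^1_1$ sentence asserting that there is a single set coding embeddings into $\alpha$ of every primitive recursive ordering that $U$ provably well-orders. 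Then $T+F\vdash\mathsf{RFN}_{\Pi^1_1}(U)$ by Lemma \ref{pwf} (inside $T+F$, every $U$-provably well-founded $\beta$ embeds into $\alpha$, and $T$ proves $\mathsf{WF}(\alpha)$). Combining this with your hypothesis $\mathsf{ACA}_0+G\vdash\mathsf{RFN}_{\Pi^1_1}(U)\to\mathsf{RFN}_{\Pi^1_1}(T)$ and with Lemma \ref{lemma}, one finds that $\mathsf{ACA}_0+F+G+\mathsf{RFN}_{\Pi^1_1}(U)$ proves its own $\Pi^1_1$-reflection, hence its own consistency; since it is axiomatized by true sentences it is consistent, contradicting G\"odel's second incompleteness theorem. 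Without this (or some equivalent) reversal, your sketch does not establish $T\leq_{\mathsf{WF}}U$.
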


\begin{proof}
\emph{Left to right:} Let $\mathsf{PrimRec}$ be an arithmetic definition of the primitive recursive linear orders. Since $|T|_{\mathsf{WF}}\leq |U|_{\mathsf{WF}}$, by Lemma \ref{embeddings}, for any primitive recursive $\gamma$ such that $T\vdash \mathsf{WF}(\gamma)$, $U\vdash^{\Sigma^1_1}\mathsf{WF}(\gamma)$. That is, the following sentence $\theta$ is true:
$$\theta:= \; \forall \gamma \in \mathsf{PrimRec} \Big(\mathsf{Pr}_T\big(\mathsf{WF}(\gamma)\big) \to \mathsf{Pr}^{\Sigma^1_1}_U\big(\mathsf{WF}(\gamma)\big) \Big).$$
Since $\mathsf{ACA}_0$ proves Proposition \ref{prop}, we infer that:
$$\mathsf{ACA}_0 \vdash \forall \gamma \in \mathsf{PrimRec} \Big(\mathsf{Pr}^{\Sigma^1_1}_U\big(\mathsf{WF}(\gamma)\big) \to \exists \delta \; \exists f \big( \mathsf{Emb}(f,\gamma,\delta) \wedge \mathsf{Pr}_U\big(\mathsf{WF}(\delta)\big) \big)\Big).$$
It follows that:
\begin{equation}\label{complicated}
  \small{  \mathsf{ACA}_0 +\theta \vdash \forall \gamma \in \mathsf{PrimRec} \Big(\mathsf{Pr}_T\big(\mathsf{WF}(\gamma)\big) \to \exists \delta \; \exists f \big( \mathsf{Emb}(f,\gamma,\delta) \wedge \mathsf{Pr}_U\big(\mathsf{WF}(\delta)\big) \big) \Big).}
\end{equation}

\begin{claim}
$\mathsf{ACA}_0+ \theta \vdash \mathsf{RFN}_{\Pi^1_1}(U)\to \mathsf{RFN}_{\Pi^1_1}(T)$
\end{claim}

To establish the claim, \textbf{we reason in $\mathsf{ACA}_0+ \theta$}: Assume that $\mathsf{RFN}_{\Pi^1_1}(U)$. By Lemma \ref{pwf}, it suffices to show that $$\forall \gamma \in \mathsf{PrimRec}\Big( \mathsf{Pr}_T\big(\mathsf{WF}(\gamma)\big)\to\mathsf{WF}(\gamma)\Big).$$
So let $\gamma\in \mathsf{PrimRec}$ and assume that $T\vdash \mathsf{WF}(\gamma)$. By \ref{complicated}, there is some $\delta$ such that $\gamma$ embeds into $\delta$ and $U\vdash \mathsf{WF}(\delta)$. Since $\mathsf{RFN}_{\Pi^1_1}(U)$, we infer that $\mathsf{WF}(\delta)$. Since $\gamma$ embeds into $\delta$, we infer that $\mathsf{WF}(\gamma)$.

This establishes the claim. So now we will go back to \textbf{reasoning externally}.

We are not quite done, since the existential set quantifier in $\theta$ occurs within the scope of a number quantifier. However, that existential set quantifier does occur positively. So $\theta$ is an essentially $\Sigma^1_1$ formula. By Lemma \ref{essentially}, there is a $\Sigma^1_1$ formula $\eta$ that is such that $\mathsf{ACA}_0\vdash \eta \to \theta$ and such that $\Sigma^1_1\text{-}\mathsf{AC}_0 \vdash \eta \leftrightarrow \theta$. Thus:
$$\mathsf{ACA}_0 +\eta \vdash \mathsf{RFN}_{\Pi^1_1}(U)\to  
\mathsf{RFN}_{\Pi^1_1}(T).$$


Since $\Sigma^1_1\text{-}\mathsf{AC}_0 \vdash \eta \leftrightarrow \theta$, $\eta$ is true $\Sigma^1_1$, so we infer that:
$$\mathsf{ACA}_0 \vdash^{\Sigma^1_1} \mathsf{RFN}_{\Pi^1_1}(U)\to 
\mathsf{RFN}_{\Pi^1_1}(T).$$

\emph{Right to left:} Assume for a contradiction that $T\leq_{\mathsf{RFN}_{\Pi^1_1}}^{\Sigma^1_1}U$ but $U<_{\mathsf{WF}}T$. Choose some $\alpha\in\mathsf{PrimRec}$ such that $|\alpha|\geq |U|_{\mathsf{WF}}$ and $T\vdash\mathsf{WF}(\alpha)$. Consider also the true $\Sigma^1_1$ sentence $F$ where:
\begin{flalign*}
F:=&\text{ ``There is $X$ such that for any $\beta\in\mathsf{PrimRec}$: if $U$ proves $\mathsf{WF}(\beta)$,}\\
&\text{ then there is $i\in\mathbb{N}$ such that $(X)_i$ encodes an embedding of $\beta$ into $\alpha$.''}
\end{flalign*}

\begin{claim}
$T+F\vdash \mathsf{RFN}_{\Pi^1_1}(U).$
\end{claim}

By Lemma \ref{pwf}, to establish the claim it suffices to show that $$T+F\vdash \forall \gamma \in \mathsf{PrimRec}\Big( \mathsf{Pr}_U\big(\mathsf{WF}(\gamma)\big)\to\mathsf{WF}(\gamma)\Big).$$ \textbf{\emph{We reason in}} $T+F$: Let $\beta\in\mathsf{PrimRec}$ be $U$-provably well-founded. By $F$, we infer that $\beta$ embeds into $\mathsf{WF}(\alpha)$. Since $\mathsf{WF}(\alpha)$, we infer that $\mathsf{WF}(\beta)$.

This establishes the claim. So now we will go back to \textbf{reasoning externally}.

By Lemma \ref{lemma}, we have:
   $$ \mathsf{ACA}_0+F\vdash \mathsf{RFN}_{\Pi^1_1}(T) \to \mathsf{RFN}_{\Pi^1_1}(T+F).$$
Combining this with the claim, we have:
\begin{equation}\label{from_F}
    \mathsf{ACA}_0+F\vdash \mathsf{RFN}_{\Pi^1_1}(T) \to \mathsf{RFN}_{\Pi^1_1}(T+\mathsf{RFN}_{\Pi^1_1}(U)).
\end{equation}

Since $T\leq_{\mathsf{RFN}_{\Pi^1_1}}^{\Sigma^1_1}U$, for some true $\Sigma^1_1$ sentence $G$ we have:
\begin{equation}\label{from_G}
    \mathsf{ACA}_0 + G\vdash \mathsf{RFN}_{\Pi^1_1}(U) \to \mathsf{RFN}_{\Pi^1_1}(T).
\end{equation}

Combining \ref{from_F} and \ref{from_G}, we infer that:
$$\mathsf{ACA}_0+F+G\vdash\mathsf{RFN}_{\Pi^1_1}(U)\to\mathsf{RFN}_{\Pi^1_1}(T+\mathsf{RFN}_{\Pi^1_1}(U)).$$
Since $T$ contains $\mathsf{ACA}_0$:
$$\mathsf{ACA}_0+F+G+\mathsf{RFN}_{\Pi^1_1}(U)\vdash\mathsf{RFN}_{\Pi^1_1}(\mathsf{ACA}_0+\mathsf{RFN}_{\Pi^1_1}(U)).$$ 
By Lemma \ref{lemma}:
$$\mathsf{ACA}_0+F+G+\mathsf{RFN}_{\Pi^1_1}(U)\vdash\mathsf{RFN}_{\Pi^1_1}\big(\mathsf{ACA}_0+F+G+\mathsf{RFN}_{\Pi^1_1}(U)\big).$$ 

So $\mathsf{ACA}_0+F+G+\mathsf{RFN}_{\Pi^1_1}(U)$ is inconsistent by G\"{o}del's second incompleteness theorem. Yet $\mathsf{ACA}_0+F+G+\mathsf{RFN}_{\Pi^1_1}(U)$ is axiomatized by true sentences. Contradiction.
\end{proof}

\subsection{The Full Equivalence}

Our second main theorem follows immediately from Lemma \ref{easy} and Lemma \ref{hard}. Note that the following is a restatement of Theorem \ref{main-intro}:
\begin{theorem*}
For $\Pi^1_1$-sound arithmetically definable $T$ and $U$ extending $\mathsf{ACA}_0:$
$$T\subseteq_{\Pi^1_1}^{\Sigma^1_1}U \Longleftrightarrow T\leq_{\mathsf{WF}} U  \Longleftrightarrow T\leq^{\Sigma^1_1}_{\mathsf{RFN_{\Pi^1_1}}} U.$$
\end{theorem*}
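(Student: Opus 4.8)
The plan is to obtain the three-way equivalence by simply chaining the two pairwise equivalences already in hand, Lemma \ref{easy} and Lemma \ref{hard}, through the common middle term $T\leq_{\mathsf{WF}}U$. The one thing worth checking is hypothesis compatibility. Lemma \ref{easy} is stated for $\Sigma^1_1$-definable and $\Pi^1_1$-sound extensions of $\mathsf{ACA}_0$; since every arithmetically definable set is in particular $\Sigma^1_1$-definable, Lemma \ref{easy} applies to any arithmetically definable $\Pi^1_1$-sound $T,U$ extending $\mathsf{ACA}_0$, and it gives $T\subseteq_{\Pi^1_1}^{\Sigma^1_1}U \Longleftrightarrow |T|_{\mathsf{WF}} \leq |U|_{\mathsf{WF}}$, i.e.\ $T\subseteq_{\Pi^1_1}^{\Sigma^1_1}U \Longleftrightarrow T\leq_{\mathsf{WF}}U$. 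Lemma \ref{hard} is stated precisely for arithmetically definable $\Pi^1_1$-sound extensions of $\mathsf{ACA}_0$, so it applies verbatim and gives $T\leq_{\mathsf{WF}}U \Longleftrightarrow T\leq^{\Sigma^1_1}_{\mathsf{RFN}_{\Pi^1_1}}U$.

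Concatenating the two biconditionals yields $T\subseteq_{\Pi^1_1}^{\Sigma^1_1}U \Longleftrightarrow T\leq_{\mathsf{WF}} U \Longleftrightarrow T\leq^{\Sigma^1_1}_{\mathsf{RFN}_{\Pi^1_1}}U$, which is the statement of the theorem. This is the entire argument; all the substantive content sits in the two lemmas, which in turn rest on Theorem \ref{kleene-brouwer} and its uniform version Theorem \ref{kb-uniform} (that well-foundedness is a universal, respectively uniformly universal, $\Pi^1_1$ property), on Proposition \ref{prop} (the $\Sigma^1_1$-definable version of Kreisel's theorem, together with the embedding trick isolated in Lemma \ref{embeddings}), and, for the delicate directions of Lemma \ref{hard}, on the essentially-$\Sigma^1_1$ normal form via $\Sigma^1_1\text{-}\mathsf{AC}_0$ and on a Gödelian diagonal argument using $\mathsf{RFN}_{\Pi^1_1}$.

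I do not expect any obstacle at this level: the proof of the combined theorem is pure bookkeeping. The only point deserving a second glance is the asymmetry between the two lemmas' hypotheses — Lemma \ref{easy} needs merely $\Sigma^1_1$-definability, whereas Lemma \ref{hard} genuinely uses arithmetical definability (to run the essentially-$\Sigma^1_1$ reduction and to form the relevant self-referential soundness sentences) — so the combined statement must be phrased with the stronger hypothesis, as it is. One may then note in passing that, since the ordinals are well-ordered, $\leq_{\mathsf{WF}}$ is a pre-well-ordering and hence so are $\subseteq_{\Pi^1_1}^{\Sigma^1_1}$ and $\leq^{\Sigma^1_1}_{\mathsf{RFN}_{\Pi^1_1}}$, giving Corollary \ref{cor-intro}; but that lies just outside the theorem itself.
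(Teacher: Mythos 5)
Your proposal matches the paper's own proof: the theorem is obtained exactly by concatenating Lemma \ref{easy} and Lemma \ref{hard} through the common middle term $T\leq_{\mathsf{WF}}U$, with the arithmetical-definability hypothesis imposed because Lemma \ref{hard} requires it while Lemma \ref{easy} needs only $\Sigma^1_1$-definability. Your hypothesis-compatibility check and the remark on the resulting pre-well-ordering (Corollary \ref{cor-intro}) are both consistent with the paper; there is nothing to correct.
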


Since the ordinals are well-ordered, Corollary \ref{cor-intro} (restated here for convenience) immediately follows:

\begin{corollary*}
The relations $\subseteq_{\Pi^1_1}^{\Sigma^1_1}$ and $\leq^{\Sigma^1_1}_{\mathsf{RFN}_{\Pi^1_1}}$ pre-well-order the $\Pi^1_1$-sound arithmetically definable extensions of $\mathsf{ACA}_0$.
\end{corollary*}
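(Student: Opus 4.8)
The plan is to read the corollary off of Theorem \ref{main-intro} (the restated second main theorem) together with the well-ordering of the ordinals; no new ideas are needed. Recall that a relation $\preceq$ pre-well-orders a class $\mathcal{C}$ when it is reflexive and transitive on $\mathcal{C}$, any two members of $\mathcal{C}$ are $\preceq$-comparable, and there is no sequence $\langle T_n \mid n\in\mathbb{N}\rangle$ of members of $\mathcal{C}$ with $T_{n+1}\prec T_n$ for every $n$, where $\prec$ denotes the strict part of $\preceq$. Here the class $\mathcal{C}$ is the collection of $\Pi^1_1$-sound arithmetically definable extensions of $\mathsf{ACA}_0$.

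First I would invoke Theorem \ref{main-intro}, which tells us that on $\mathcal{C}$ the three relations $\subseteq_{\Pi^1_1}^{\Sigma^1_1}$, $\leq_{\mathsf{WF}}$, and $\leq^{\Sigma^1_1}_{\mathsf{RFN}_{\Pi^1_1}}$ literally coincide. So it suffices to verify that $\leq_{\mathsf{WF}}$ pre-well-orders $\mathcal{C}$; the claims about $\subseteq_{\Pi^1_1}^{\Sigma^1_1}$ and $\leq^{\Sigma^1_1}_{\mathsf{RFN}_{\Pi^1_1}}$ are then the same claim. Next I would unwind the definition $T\leq_{\mathsf{WF}}U \defiff |T|_{\mathsf{WF}}\leq|U|_{\mathsf{WF}}$: the assignment $T\mapsto|T|_{\mathsf{WF}}$ sends each member of $\mathcal{C}$ to an ordinal, and $\leq_{\mathsf{WF}}$ is by definition the pullback along this assignment of the order $\leq$ on ordinals. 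Reflexivity and transitivity of $\leq_{\mathsf{WF}}$ are inherited from $\leq$ on ordinals; comparability of any two members of $\mathcal{C}$ holds because any two ordinals are comparable; and the strict part $<_{\mathsf{WF}}$ is well-founded because an infinite $<_{\mathsf{WF}}$-descending chain of theories would give rise to an infinite strictly descending chain of ordinals, which does not exist. This yields the corollary.

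The main (indeed only) obstacle is already behind us: it is packaged into the equivalences of Theorem \ref{main-intro}, established via Lemma \ref{easy} and Lemma \ref{hard}. What remains is genuinely a formality, the sole point requiring any care being to fix the precise definition of ``pre-well-order'' in use and to check that each of its clauses transfers along the coincidence of $\subseteq_{\Pi^1_1}^{\Sigma^1_1}$ and $\leq^{\Sigma^1_1}_{\mathsf{RFN}_{\Pi^1_1}}$ with $\leq_{\mathsf{WF}}$ on $\mathcal{C}$ — which, since the three relations are literally equal there, it does trivially.
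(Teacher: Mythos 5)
Your proposal is correct and matches the paper's own argument: the paper likewise derives the corollary immediately from Theorem \ref{main-intro} together with the fact that the ordinals are well-ordered, so that $\subseteq_{\Pi^1_1}^{\Sigma^1_1}$ and $\leq^{\Sigma^1_1}_{\mathsf{RFN}_{\Pi^1_1}}$ inherit the pre-well-ordering structure from $\leq_{\mathsf{WF}}$. Your explicit verification of the clauses (reflexivity, transitivity, comparability, no infinite strictly descending chains) is just a spelled-out version of what the paper leaves as ``immediately follows.''
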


\subsection{A Negative Result}\label{negative}

Note that Lemma \ref{easy} is stated for $\Sigma^1_1$-definable theories but Lemma \ref{hard} is stated only for arithmetically definable theories. If we could prove Lemma \ref{hard} for all $\Sigma^1_1$-definable theories, we would thereby strengthen Theorem \ref{main-intro}. Thus, it is worth pointing out where our proof of Lemma \ref{hard} would break down if we assumed only that $T$ and $U$ are $\Sigma^1_1$-definable. Look specifically at the left-to-right direction. Note that the sentence $\theta$ has ``$T$ proves $\mathsf{WF}(\gamma)$'' in the antecedent of a conditional. If $T$ is merely $\Sigma^1_1$-definable, then $\theta$ will not even be essentially $\Sigma^1_1$, since $\mathsf{Pr}_T\big(\mathsf{WF}(\gamma)\big)$ occurs negatively in $\theta$. So our $\Sigma^1_1$ oracle will not give us access to the sentence $\theta$. That is, from the conclusion $$\mathsf{ACA}_0+\theta \vdash \mathsf{RFN}_{\Pi^1_1}(U)\to \mathsf{RFN}_{\Pi^1_1}(T)$$ we cannot infer that $\mathsf{ACA}\vdash^{\Sigma^1_1} \mathsf{RFN}_{\Pi^1_1}(U)\to \mathsf{RFN}_{\Pi^1_1}(T).$

In fact, it is not possible to strengthen Lemma \ref{hard} to cover all $\Sigma^1_1$-definable theories. In the next subsection we will prove a lemma that we will use to this end, namely, a version of the Kreisel--L\'{e}vy unboundedness theorem. In the following subsection we will show that Lemma \ref{hard} cannot be strengthened to cover all $\Sigma^1_1$-definable theories. 

\subsubsection{An unboundedness result}

In this subsubsection we prove an analogue of the Kreisel--L\'{e}vy unboundedness theorem from \cite{kreisel1968reflection} \textsection 8; for a modern presentation of the Kreisel--L\'{e}vy theorem, see \cite{beklemishev2005reflection} \textsection 2.4. We derive our result from an analogue of the second incompleteness theorem proved in \cite{walsh2021incompleteness}:
\begin{theorem}[W]\label{incompleteness}
If $T$ is a $\Sigma^1_1$-definable and $\Pi^1_1$-sound extension of $\Sigma^1_1\text{-}\mathsf{AC}_0$ then $T\nvdash \mathsf{RFN}_{\Pi^1_1}(T)$.
\end{theorem}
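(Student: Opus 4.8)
The plan is to run a diagonal argument in the style of the second incompleteness theorem, with $\Pi^1_1$-soundness playing the role that consistency plays there and $\Sigma^1_1$-definability playing the role of recursive axiomatizability. The starting point is that since $T$ is $\Sigma^1_1$-definable, the provability predicate $\mathsf{Pr}_T$ is (equivalent to) a $\Sigma^1_1$ predicate: a $T$-derivation can be coded together with a single set parameter that witnesses, for each axiom the derivation invokes, the existential set quantifier in the $\Sigma^1_1$ definition of the axiom set of $T$, and with this parameter at hand the statement ``this codes a $T$-derivation of $\varphi$'' is arithmetical. Consequently $\neg\mathsf{Pr}_T(\varphi)$ is $\Pi^1_1$, so by the diagonal lemma over $\mathsf{ACA}_0$ there is a $\Pi^1_1$ sentence $G$ with $\mathsf{ACA}_0\vdash G\leftrightarrow\neg\mathsf{Pr}_T(G)$.

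The first half of the argument shows $T\nvdash G$ from $\Pi^1_1$-soundness alone: if $T\vdash G$, a genuine derivation furnishes genuine set-witnesses, so $\mathsf{Pr}_T(G)$ is true and $\neg\mathsf{Pr}_T(G)$ is a false $\Pi^1_1$ sentence; but $\mathsf{ACA}_0\vdash G\to\neg\mathsf{Pr}_T(G)$, so $T$ would prove a false $\Pi^1_1$ sentence, against soundness. For the second half, suppose toward a contradiction that $T\vdash\mathsf{RFN}_{\Pi^1_1}(T)$. Since ``$G$ is a $\Pi^1_1$ sentence'' is a decidable truth and hence provable in $\mathsf{ACA}_0$, instantiating $\mathsf{RFN}_{\Pi^1_1}(T)$ at $G$ yields $T\vdash\mathsf{Pr}_T(G)\to\mathsf{True}_{\Pi^1_1}(G)$; and by the adequacy of the $\Pi^1_1$-truth predicate, which is the content of Theorems \ref{kleene-brouwer} and \ref{kb-uniform}, $\mathsf{ACA}_0\vdash\mathsf{True}_{\Pi^1_1}(G)\leftrightarrow G$, so $T\vdash\mathsf{Pr}_T(G)\to G$. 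Combining this with the fixed-point equivalence $T\vdash\neg G\leftrightarrow\mathsf{Pr}_T(G)$ gives $T\vdash\neg G\to G$, i.e.\ $T\vdash G$, contradicting the first half. Hence $T\nvdash\mathsf{RFN}_{\Pi^1_1}(T)$, and since $T\vdash\mathsf{RFN}_{\Pi^1_1}(T)$ would make $T$ and $T+\mathsf{RFN}_{\Pi^1_1}(T)$ the same theory, the theorem follows.

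The step I expect to demand the most care is the complexity bookkeeping behind the two facts used above: that $\mathsf{Pr}_T$ genuinely lands in $\Sigma^1_1$ for $\Sigma^1_1$-definable $T$, and that $\mathsf{RFN}_{\Pi^1_1}(T)$, written with a genuine $\Pi^1_1$-truth predicate, supports the instantiation and the Tarski biconditional above already in the base theory. This is the natural place where $\mathsf{ACA}_0$ might have to be strengthened to $\Sigma^1_1\text{-}\mathsf{AC}_0$, in the same spirit as the appeal to $\Sigma^1_1\text{-}\mathsf{AC}_0$ and Lemma \ref{essentially} in the proof of Lemma \ref{hard}: one wants the ``essentially $\Sigma^1_1$'' descriptions of $T$-provability and of $\Pi^1_1$-truth collapsed to honest $\Sigma^1_1$ formulas. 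An alternative route, presumably the one taken in \cite{walsh2021incompleteness} given that paper's title, is to prove the stronger statement $|T|_{\mathsf{WF}}<|T+\mathsf{RFN}_{\Pi^1_1}(T)|_{\mathsf{WF}}$ directly, by showing that $T+\mathsf{RFN}_{\Pi^1_1}(T)$ proves well-foundedness for a recursive order approximating the $\Sigma^1_1$-definable supremum of the $T$-provable recursive well-orderings; there the gluing together of the required embeddings is exactly where $\Sigma^1_1$ choice enters, paralleling the role of the sentence $F$ in the proof of Lemma \ref{hard}. Either way the theorem follows.
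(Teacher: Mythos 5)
Your argument is essentially correct, but it is a genuinely different proof from the one behind this theorem: the present paper does not prove Theorem \ref{incompleteness} at all---it imports it from \cite{walsh2021incompleteness}---and the whole point of that paper is to obtain the result \emph{without} self-reference, by ordinal-analytic means ($\Sigma^1_1$-boundedness and the behaviour of $|T|_{\mathsf{WF}}$, roughly along the lines of the ``alternative route'' you sketch in your last paragraph, where $\Sigma^1_1$ choice is indeed used to handle the $\Sigma^1_1$-presented amalgam of $T$-provably well-founded orderings). What you give instead is the classical G\"odel--Tarski diagonal argument adapted to this setting, and it does go through: since the issue is reflection rather than consistency, no derivability conditions are needed---only (i) a genuinely $\Sigma^1_1$ formula $\mathsf{Pr}^*_T$ that externally defines $T$-provability (merge the finitely many axiom-witnesses into a single set), (ii) the fixed point $G$, which is then literally $\Pi^1_1$, (iii) the Tarski biconditionals for $\mathsf{True}_{\Pi^1_1}$ over $\mathsf{ACA}_0$, and (iv) a bridge between $\mathsf{Pr}^*_T$ and whichever provability formula appears inside $\mathsf{RFN}_{\Pi^1_1}(T)$; note that the direction you actually need inside $T$ is the easy one (a single combined witness yields each individual witness), while the hypothesis $T\supseteq\Sigma^1_1\text{-}\mathsf{AC}_0$ covers, via Lemma \ref{essentially}, any place where the essentially-$\Sigma^1_1$ and collapsed formulations must be interchanged---exactly the bookkeeping you flag. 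Two small inaccuracies: the adequacy of the $\Pi^1_1$ truth predicate is not literally ``the content'' of Theorems \ref{kleene-brouwer} and \ref{kb-uniform} (those give the universality of well-foundedness, which is the standard route to such a truth definition in $\mathsf{ACA}_0$, but the biconditionals still need to be set up), and your closing remark that $T\vdash\mathsf{RFN}_{\Pi^1_1}(T)$ ``would make $T$ and $T+\mathsf{RFN}_{\Pi^1_1}(T)$ the same theory'' is doing no work in the diagonal argument---it belongs to the ordinal-theoretic route. As for what each approach buys: yours is shorter and more elementary; the cited proof avoids diagonalization entirely and exhibits the link to proof-theoretic ordinals that this paper (e.g.\ Lemma \ref{hard} and the negative result) is built around.
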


Here is our version of the unboundedness theorem:

\begin{lemma}\label{unboundedness}
Let $T$ be a $\Sigma^1_1$-definable $\Pi^1_1$-sound extension of $\Sigma^1_1\text{-}\mathsf{AC}_0$. Then no extension of $T$ by a true $\Sigma^1_1$-sentence proves $\mathsf{RFN}_{\Pi^1_1}(T)$.
\end{lemma}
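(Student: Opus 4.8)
The plan is to derive this from the abstract incompleteness theorem, Theorem \ref{incompleteness}, applied not to $T$ itself but to the extension $T+\varphi$. Suppose for contradiction that $\varphi$ is a true $\Sigma^1_1$ sentence with $T+\varphi\vdash\mathsf{RFN}_{\Pi^1_1}(T)$. First I would check that $T+\varphi$ falls within the scope of Theorem \ref{incompleteness}: it is $\Sigma^1_1$-definable (adjoining a single sentence to a $\Sigma^1_1$-definable axiom set leaves it $\Sigma^1_1$-definable), it extends $\Sigma^1_1\text{-}\mathsf{AC}_0$ since $T$ does, and it is $\Pi^1_1$-sound by Lemma \ref{lemma}. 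Hence Theorem \ref{incompleteness} gives $T+\varphi\nvdash\mathsf{RFN}_{\Pi^1_1}(T+\varphi)$.

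Next I would bridge from $\mathsf{RFN}_{\Pi^1_1}(T)$ to $\mathsf{RFN}_{\Pi^1_1}(T+\varphi)$ using the formalized form of Lemma \ref{lemma}, exactly as it is used in the proof of Lemma \ref{hard}. The argument of Lemma \ref{lemma} — if $T+\varphi\vdash\psi$ then $T\vdash\varphi\to\psi$, and $\varphi\to\psi$ is $\Pi^1_1$, so if $\mathsf{RFN}_{\Pi^1_1}(T)$ holds then $\varphi\to\psi$ is true, whence $\psi$ is true given $\varphi$ — formalizes in $\mathsf{ACA}_0$, yielding $\mathsf{ACA}_0+\varphi\vdash\mathsf{RFN}_{\Pi^1_1}(T)\to\mathsf{RFN}_{\Pi^1_1}(T+\varphi)$. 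Since $T+\varphi$ extends $\mathsf{ACA}_0$ and proves $\varphi$, it proves $\mathsf{RFN}_{\Pi^1_1}(T)\to\mathsf{RFN}_{\Pi^1_1}(T+\varphi)$; combining this with the assumption $T+\varphi\vdash\mathsf{RFN}_{\Pi^1_1}(T)$, we obtain $T+\varphi\vdash\mathsf{RFN}_{\Pi^1_1}(T+\varphi)$, contradicting the previous paragraph. This completes the argument.

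The proof is thus essentially a one-line reduction to Theorem \ref{incompleteness}, and I expect no serious obstacle. The only point requiring care is the appeal to the formalized version of Lemma \ref{lemma}: one must ensure that $\mathsf{RFN}_{\Pi^1_1}(T)\to\mathsf{RFN}_{\Pi^1_1}(T+\varphi)$ is \emph{provable} in $\mathsf{ACA}_0+\varphi$ rather than merely true. But this is immediate from the fact that the reasoning of Lemma \ref{lemma} is elementary and uniform in the $\Pi^1_1$ consequence being reflected — indeed it is precisely the step already invoked (as ``$\mathsf{ACA}_0+F\vdash\mathsf{RFN}_{\Pi^1_1}(T)\to\mathsf{RFN}_{\Pi^1_1}(T+F)$'') in the proof of Lemma \ref{hard}.
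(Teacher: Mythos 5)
Your proposal is correct and is essentially the paper's own argument: both reduce to Theorem \ref{incompleteness} applied to $T+\varphi$, use Lemma \ref{lemma} to certify that $T+\varphi$ is $\Pi^1_1$-sound, and obtain $T+\varphi\vdash\mathsf{RFN}_{\Pi^1_1}(T+\varphi)$ by internalizing the argument of Lemma \ref{lemma} (the paper phrases this as ``reasoning in $T+\varphi$,'' you phrase it as the $\mathsf{ACA}_0$-provable implication $\mathsf{RFN}_{\Pi^1_1}(T)\to\mathsf{RFN}_{\Pi^1_1}(T+\varphi)$ already used in Lemma \ref{hard}, which amounts to the same step). The only difference is the order in which the contradiction is assembled, so there is nothing further to add.
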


\begin{proof}
Suppose $T+\varphi\vdash \mathsf{RFN}_{\Pi^1_1}(T)$ where $\varphi$ is $\Sigma^1_1$.

To see that $T+\varphi\vdash \mathsf{RFN}_{\Pi^1_1}(T+\varphi)$, we reason in $T+\varphi$.

\emph{Reasoning in $T+\varphi$:} Let $\psi$ be $\Pi^1_1$ such that $T+\varphi \vdash \psi$. Then $T\vdash \varphi \to \psi$. Note that $\varphi \to \psi$ is a $\Pi^1_1$ sentence. Thus, $\varphi\to\psi$ is true (since $T+\varphi\vdash\mathsf{RFN}_{\Pi^1_1}(T)$). But $\varphi$ is also true. So $\psi$ is true too.

So $T+\varphi \vdash \mathsf{RFN}_{\Pi^1_1}(T+\varphi)$. But then $T+\varphi$ is not $\Pi^1_1$-sound by Theorem \ref{incompleteness}. This contradicts Lemma \ref{lemma}.
\end{proof}

\subsubsection{Conservation}

Let's collect one more result before continuing. First, we recall the Barwise--Schlipf conservation theorem \cite{barwise1975recursively}:

\begin{theorem}[Barwise--Schlipf]\label{barwise}
$\Sigma^1_1\text{-}\mathsf{AC}_0$ is $\Pi^1_2$-conservative over $\mathsf{ACA}_0$.
\end{theorem}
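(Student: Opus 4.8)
The plan is to recall that this is the Barwise--Schlipf theorem \cite{barwise1975recursively}, so in the body of the paper one may simply cite it; but here is the shape of a proof. By the completeness theorem and L\"owenheim--Skolem it suffices to show: for every $\Pi^1_2$ sentence $\varphi$ and every countable $M=(|M|,S_M)\models\mathsf{ACA}_0+\neg\varphi$, there is a model $N\models\Sigma^1_1\text{-}\mathsf{AC}_0$ with $N\models\neg\varphi$. Write $\neg\varphi$ as $\exists X\,\psi(X)$ with $\psi\in\Pi^1_1$ and fix a witness $A\in S_M$, so that $M\models\psi[A]$. By the relativization of Theorem \ref{kleene-brouwer} (the Kleene--Brouwer construction being uniform in parameters, just as in Theorem \ref{kb-uniform}) there is an $X$-recursive linear order $\prec_X$, uniformly in $X$, with $\mathsf{ACA}_0\vdash\forall X\big(\psi(X)\leftrightarrow\mathsf{WF}(\prec_X)\big)$; hence $M\models\mathsf{WF}(\prec_A)$. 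I would then aim to build $N=(|M|,S_N)$ over the \emph{same} first-order part, with $A\in S_N$, $N\models\Sigma^1_1\text{-}\mathsf{AC}_0$, and $N\models\mathsf{WF}(\prec_A)$; since the first-order part is unchanged and $\psi$ is $\Pi^1_1$, the last condition yields $N\models\psi[A]$ and hence $N\models\neg\varphi$, which is what is wanted.

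I would produce such an $N$ in one of two ways. The cleanest picture---correct at least when $M$ is an $\omega$-model---is to let $S_N$ consist of the sets that are hyperarithmetic in $A$ (in $M$'s sense), so that $S_N\subseteq S_M$: this collection is closed under Turing jump and join, so $N\models\mathsf{ACA}_0$; well-foundedness of $\prec_A$ is inherited because sets were only discarded; and $N\models\Sigma^1_1\text{-}\mathsf{AC}_0$ is the relativized form of Kreisel's theorem that the hyperarithmetic sets validate $\Sigma^1_1$-choice, obtained by extracting, from a $\Sigma^1_1$ instance $\forall n\,\exists X\,\chi(n,X)$ holding in $N$, a choice function of hyperarithmetic-in-$A$ complexity via $\Sigma^1_1$-selection (Kreisel uniformization). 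For a possibly ill-founded $M$ I would instead build $N\supseteq M$ by a model-theoretic forcing: conditions are finite approximations to the atomic and arithmetical diagram of $N$, genericity is used to force each jump axiom and each $\Sigma^1_1$-choice instance, and one works with a forcing that adds no branch through a ground-model well-founded tree, which keeps $\prec_A$ well-founded in $N$ and, more generally, preserves $M$'s $\Pi^1_1$ assertions about its old sets.

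The hard part, on either route, is the simultaneous demand that $N$ validate $\Sigma^1_1\text{-}\mathsf{AC}$ \emph{and} retain well-foundedness of $\prec_A$: naively adding choice sets for $\Sigma^1_1$ instances risks introducing an infinite $\prec_A$-descending sequence. For the submodel route the essential input is Kreisel's $\Sigma^1_1$-selection theorem, which moreover must be arranged so as to apply to an arbitrary (possibly non-$\omega$) model of $\mathsf{ACA}_0$ rather than to the standard model; for the forcing route it is the standard but technical ``preservation of well-foundedness'' (equivalently, ``no new branch through a tree'') lemma for the forcing in question, which is precisely what makes $M$'s $\Pi^1_1$ facts survive into $N$. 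Once either is in place, $N$ witnesses $\Sigma^1_1\text{-}\mathsf{AC}_0\nvdash\varphi$, completing the argument; for a fully detailed treatment see \cite{barwise1975recursively} or the textbook presentation in \cite{simpson2009subsystems}.
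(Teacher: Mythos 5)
The paper does not prove this statement at all: Theorem \ref{barwise} is imported as a known result and attributed to Barwise--Schlipf \cite{barwise1975recursively} (see also \cite{simpson2009subsystems}), so your opening remark that one may simply cite it matches the paper's actual treatment. Judged as a proof sketch, however, your proposal has a genuine gap, and it sits exactly where the content of the theorem lies. Your first route (shrinking to the sets hyperarithmetic in $A$) works, as you concede, only when $M$ is an $\omega$-model; but the theorem cannot be reduced to $\omega$-models. Take $\varphi=\mathsf{Con}(\mathsf{ACA}_0)$, a $\Pi^0_1$ (hence $\Pi^1_2$) sentence unprovable in $\mathsf{ACA}_0$: every model of $\mathsf{ACA}_0+\neg\varphi$ has a nonstandard first-order part, since $\neg\varphi$ is a false $\Sigma^0_1$ sentence. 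So the entire burden falls on your second, ``forcing'' route, which is described only in outline and is not the standard argument.

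In that route the two steps you defer are precisely the hard core, and neither is routine over an ill-founded $M$. ``Well-foundedness'' of $\prec_A$ is now just a $\Pi^1_1$ assertion of $M$ about its own sets, so a preservation lemma of the form ``the forcing adds no branch through a ground-model well-founded tree'' has no external well-foundedness to lean on; adding sets can perfectly well destroy $M$'s $\Pi^1_1$ beliefs, and ruling this out is the whole problem. Likewise, forcing an instance $\forall n\,\exists X\,\theta(n,X)\to\exists Y\,\forall n\,\theta(n,(Y)_n)$ of $\Sigma^1_1$ choice is not a simple density argument: the choice sequence must be defined at all (including nonstandard) $n\in M$, and even ``finite'' $\Sigma^1_1$ choice needs $\Sigma^1_1$ induction, which $\mathsf{ACA}_0$ does not supply. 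The classical resolution is Barwise and Schlipf's: pass to a countable \emph{recursively saturated} elementary extension of $(M,S_M)$ and use saturation (via admissible-set methods) to verify $\Sigma^1_1\text{-}\mathsf{AC}$ while preserving the $\Pi^1_1$ fact about $A$; alternative purely proof-theoretic proofs via cut elimination also exist. Some such input is needed and is absent from your sketch; relatedly, ``hyperarithmetic in $A$ in $M$'s sense'' is itself delicate for nonstandard $M$, since Kreisel's $\Sigma^1_1$-selection argument uses genuine well-foundedness of the ordinal notations involved. So the right move is the one the paper makes---cite \cite{barwise1975recursively}---and your sketch, as written, does not yet amount to a proof.
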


Formalizing the Barwise--Schlipf theorem in $\mathsf{ACA}_0$ yields the following:

\begin{theorem}\label{barwise-internal}
Provably in $\mathsf{ACA}_0$, $\Sigma^1_1\text{-}\mathsf{AC}_0$ is $\Pi^1_2$-conservative over $\mathsf{ACA}_0$.
\end{theorem}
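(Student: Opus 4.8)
The plan is to verify that the standard proof of the Barwise--Schlipf theorem (Theorem \ref{barwise}) formalizes inside $\mathsf{ACA}_0$. That proof is model-theoretic: every countable model $\mathcal{M}$ of $\mathsf{ACA}_0$ sits inside a model $\mathcal{M}'$ of $\Sigma^1_1\text{-}\mathsf{AC}_0$ with the same first-order part, and the extension is \emph{conservative} over $\mathcal{M}$ in a sense strong enough that the arithmetic matrix of a $\Sigma^1_2$ sentence holding of a set parameter in $\mathcal{M}$ still holds of that parameter in $\mathcal{M}'$. Since the negation of a $\Pi^1_2$ sentence has exactly this shape, this yields $\Pi^1_2$-conservation. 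So the task is to reconstruct this argument using only arithmetical comprehension, exploiting the fact that $\mathsf{ACA}_0$ already proves the relevant fragment of first-order model theory (in particular the arithmetized completeness theorem, via $\mathsf{WKL}_0$).

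Reasoning throughout in $\mathsf{ACA}_0$: fix a $\Pi^1_2$ sentence $\varphi$ with $\mathsf{Pr}_{\Sigma^1_1\text{-}\mathsf{AC}_0}(\varphi)$ and suppose toward a contradiction that $\neg\mathsf{Pr}_{\mathsf{ACA}_0}(\varphi)$. Then $\mathsf{ACA}_0 + \neg\varphi$ is consistent, which $\mathsf{ACA}_0$ derives by routine syntactic manipulation; by the arithmetized completeness theorem it then produces a countable coded (two-sorted) model $\mathcal{M}\models\mathsf{ACA}_0+\neg\varphi$. Write $\neg\varphi$ as $\exists X\,\forall Y\,\theta(X,Y)$ with $\theta$ arithmetic, and let $X_0$ be a set-element of $\mathcal{M}$ with $\mathcal{M}\models\forall Y\,\theta(X_0,Y)$. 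Next, carry out --- as an explicit operation on the code for $\mathcal{M}$ --- the passage to a coded model $\mathcal{M}'\supseteq\mathcal{M}$ of $\Sigma^1_1\text{-}\mathsf{AC}_0$, with the same first-order part, that is conservative over $\mathcal{M}$ in the requisite sense, e.g.\ by closing the set-part of $\mathcal{M}$ under the operations demanded by the $\Sigma^1_1$-choice instances while tracking the added codes by a single arithmetically defined set. Since no element of $\mathcal{M}'$ outside $\mathcal{M}$ satisfies $\theta(X_0,\cdot)$ --- this being exactly what the conservativity of the extension provides --- we get $\mathcal{M}'\models\forall Y\,\theta(X_0,Y)$, hence $\mathcal{M}'\models\Sigma^1_1\text{-}\mathsf{AC}_0+\neg\varphi$, contradicting $\mathsf{Pr}_{\Sigma^1_1\text{-}\mathsf{AC}_0}(\varphi)$. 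Therefore $\mathsf{Pr}_{\mathsf{ACA}_0}(\varphi)$, and since $\varphi$ was an arbitrary $\Pi^1_2$ sentence this is the theorem.

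The main obstacle is the construction of $\mathcal{M}'$ together with the verification of its conservativity inside $\mathsf{ACA}_0$. The delicate point is that $\mathsf{ACA}_0$ cannot prove outright that $\Sigma^1_1\text{-}\mathsf{AC}_0$ has any model whatsoever --- it does not even prove $\mathsf{Con}(\Sigma^1_1\text{-}\mathsf{AC}_0)$ --- so the whole construction must be performed \emph{relative to the given} $\mathcal{M}$, and one must check both that the resulting structure validates each $\Sigma^1_1$-choice instance and that it introduces no new witness to $\theta(X_0,\cdot)$; pinning down the conservativity clause exactly (and, if necessary, tailoring the construction to the given $\theta$) is where the care is needed. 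An alternative worth pursuing in parallel is to bypass models entirely and give a syntactic proof: a primitive recursive transformation taking any $\Sigma^1_1\text{-}\mathsf{AC}_0$-derivation of a $\Pi^1_2$ sentence to an $\mathsf{ACA}_0$-derivation of the same sentence, with its correctness verified $\mathsf{ACA}_0$-provably; since the relevant reduction of the $\Sigma^1_1$-choice rule past $\Pi^1_2$ conclusions involves no transfinite recursion, such a verification is comfortably within the reach of $\mathsf{ACA}_0$.
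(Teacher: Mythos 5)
The paper itself offers no detailed argument here---Theorem \ref{barwise-internal} is justified by the single remark that the Barwise--Schlipf proof formalizes in $\mathsf{ACA}_0$---so your instinct to reconstruct that formalization is the right one, and your outer frame (arithmetized completeness inside $\mathsf{ACA}_0$, a coded model of $\mathsf{ACA}_0+\neg\varphi$ with witness $X_0$, then a same-first-order-part extension to a coded model of $\Sigma^1_1\text{-}\mathsf{AC}_0+\neg\varphi$, then formalized soundness against $\mathsf{Pr}_{\Sigma^1_1\text{-}\mathsf{AC}_0}(\varphi)$) is the standard shape of the argument.

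The gap is at the one step that carries all the weight: the construction of $\mathcal{M}'$. As written it is a placeholder, and where it is concrete it goes wrong. First, the conservativity clause is stated backwards: to keep $\neg\varphi=\exists X\forall Y\,\theta(X,Y)$ you need every \emph{new} set $Y$ of $\mathcal{M}'$ to \emph{satisfy} $\theta(X_0,Y)$, not that ``no element of $\mathcal{M}'$ outside $\mathcal{M}$ satisfies $\theta(X_0,\cdot)$''---under your clause the new sets would be counterexamples and $\forall Y\,\theta(X_0,Y)$ would fail. Second, ``closing the set-part under the operations demanded by the $\Sigma^1_1$-choice instances'' is not a construction that verifies those instances: an instance $\forall n\exists Y\,\psi(n,Y)\to\exists Z\forall n\,\psi(n,(Z)_n)$ must be witnessed by a \emph{single} coded $Z$ whose sections work for every $n$ of the (generally nonstandard) model; adding witnesses $Y_n$ one at a time yields no such uniform code, and nothing in the closure process guarantees that the added sets themselves satisfy $\theta(X_0,\cdot)$. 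The Barwise--Schlipf mechanism that resolves both problems at once is different: pass to a countable recursively saturated elementary extension of the first-order structure $(M,X_0)$ and take the second-order part to be its parametrically definable sets; recursive saturation (resplendency) is what yields $\Sigma^1_1$-$\mathsf{AC}$, and preservation of $\forall Y\,\theta(X_0,Y)$ follows by elementarity because every new set is definable from $X_0$ and first-order parameters, so each instance transfers as a first-order scheme. That idea is absent from your sketch, and if you pursue the model-theoretic route inside $\mathsf{ACA}_0$ you also owe a verification that the needed coded recursively saturated model, together with a satisfaction class for it, is $\mathsf{ACA}_0$-provably available (arithmetized completeness in $\mathsf{WKL}_0$ gives coded models with full elementary diagrams; the saturation step is an additional argument). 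Your closing alternative---a primitive recursive proof transformation, verified in a weak metatheory---does exist and would suffice, but as stated it is only a gesture, not a proof.
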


Now we are ready to state the corollary.

\begin{corollary}\label{bar-cor}
Provably in $\mathsf{ACA}_0$, for every $\varphi\in\Pi^1_1$, if $\mathsf{RFN}_{\Pi^1_1}(\mathsf{ACA}_0+\varphi)$, then $\mathsf{RFN}_{\Pi^1_1}(\Sigma^1_1\text{-}\mathsf{AC}_0+\varphi)$.
\end{corollary}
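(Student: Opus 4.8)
The plan is to argue inside $\mathsf{ACA}_0$ and reduce the $\Pi^1_1$-soundness of $\Sigma^1_1\text{-}\mathsf{AC}_0 + \varphi$ to that of $\mathsf{ACA}_0 + \varphi$ by feeding each $\Pi^1_1$ consequence of $\Sigma^1_1\text{-}\mathsf{AC}_0 + \varphi$ through the internal Barwise--Schlipf theorem (Theorem~\ref{barwise-internal}). So I would reason in $\mathsf{ACA}_0$, fix $\varphi\in\Pi^1_1$, and assume $\mathsf{RFN}_{\Pi^1_1}(\mathsf{ACA}_0+\varphi)$; the goal is $\mathsf{RFN}_{\Pi^1_1}(\Sigma^1_1\text{-}\mathsf{AC}_0+\varphi)$. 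Fix $\psi\in\Pi^1_1$ with $\mathsf{Pr}_{\Sigma^1_1\text{-}\mathsf{AC}_0+\varphi}(\psi)$. By the formalized deduction theorem, $\Sigma^1_1\text{-}\mathsf{AC}_0\vdash\varphi\to\psi$.

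The key observation is that, because $\varphi$ and $\psi$ are both $\Pi^1_1$, the sentence $\varphi\to\psi$ is $\mathsf{ACA}_0$-provably equivalent to a $\Pi^1_2$ sentence: writing $\varphi=\forall X\,\varphi_0(X)$ and $\psi=\forall Y\,\psi_0(Y)$ with $\varphi_0,\psi_0$ arithmetic, routine prenexing gives $\varphi\to\psi\leftrightarrow \forall Y\,\exists X\,(\neg\varphi_0(X)\vee\psi_0(Y))$, and this equivalence already holds in pure predicate logic. The map sending $(\varphi,\psi)$ to this $\Pi^1_2$ sentence $\chi$ is primitive recursive, and $\mathsf{ACA}_0$ proves, uniformly in the codes, that $\chi$ is provably equivalent to $\varphi\to\psi$; hence $\Sigma^1_1\text{-}\mathsf{AC}_0\vdash\varphi\to\psi$ yields $\Sigma^1_1\text{-}\mathsf{AC}_0\vdash\chi$. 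Applying Theorem~\ref{barwise-internal} gives $\mathsf{ACA}_0\vdash\chi$, hence $\mathsf{ACA}_0\vdash\varphi\to\psi$, hence $\mathsf{Pr}_{\mathsf{ACA}_0+\varphi}(\psi)$. Since $\psi\in\Pi^1_1$ and we assumed $\mathsf{RFN}_{\Pi^1_1}(\mathsf{ACA}_0+\varphi)$, we conclude $\mathsf{True}_{\Pi^1_1}(\psi)$. As $\psi$ was an arbitrary $\Pi^1_1$ consequence of $\Sigma^1_1\text{-}\mathsf{AC}_0+\varphi$, this gives $\mathsf{RFN}_{\Pi^1_1}(\Sigma^1_1\text{-}\mathsf{AC}_0+\varphi)$; discharging the assumption on $\varphi$ yields the corollary.

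The part that needs care — and is essentially the only obstacle — is verifying that the ``key observation'' is genuinely available \emph{inside} $\mathsf{ACA}_0$: that the prenex normalization of $\varphi\to\psi$ into $\Pi^1_2$ form, and the fact that it preserves provable equivalence over $\mathsf{ACA}_0$, are formalizable uniformly in the possibly nonstandard codes $\varphi,\psi$, and likewise that the formalized deduction theorem is at hand. These are all standard bookkeeping facts (elementary arithmetic suffices for the syntactic manipulations, and $\mathsf{ACA}_0$ is $\Sigma^0_1$-complete), so morally the corollary is just ``$\Pi^1_2$-conservativity, plus $\varphi\to\psi\in\Pi^1_2$ when $\varphi,\psi\in\Pi^1_1$, plus one reflection step,'' carried out internally.
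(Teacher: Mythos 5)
Your proof is correct and is essentially the paper's own argument: both reduce the claim to the internally formalized Barwise--Schlipf theorem (Theorem~\ref{barwise-internal}) via the observation that $\varphi\to\psi$ is (provably equivalent to a) $\Pi^1_2$ sentence when $\varphi,\psi\in\Pi^1_1$. The only differences are cosmetic---the paper argues contrapositively and leaves the prenexing and deduction-theorem bookkeeping implicit, whereas you spell them out.
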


\begin{proof}
Reason in $\mathsf{ACA}_0$. Suppose $\neg \mathsf{RFN}_{\Pi^1_1}(\Sigma^1_1\text{-}\mathsf{AC}_0+\varphi)$, i.e., $\Sigma^1_1\text{-}\mathsf{AC}_0 \vdash \varphi \to \psi$ for some false $\Pi^1_1$ sentence $\psi$. Note that $\varphi \to \psi$ is $\Pi^1_2$. So by Theorem \ref{barwise-internal}, $\mathsf{ACA}_0\vdash \varphi \to \psi$. So  $\neg \mathsf{RFN}_{\Pi^1_1}(\mathsf{ACA}_0+\varphi)$.
\end{proof}

Combining this lemma with the unboundedness lemma yields the following useful fact.

\begin{proposition}\label{unbound-aca}
For any true $\Pi^1_1$ sentence $\varphi$: $$\mathsf{ACA}_0 +\varphi \nvdash^{\Sigma^1_1} \mathsf{RFN}_{\Pi^1_1}\big( \mathsf{ACA}_0 + \varphi \big).$$
\end{proposition}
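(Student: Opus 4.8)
The plan is to argue by contradiction, reducing the statement to Lemma \ref{unboundedness} via the formalized Barwise--Schlipf conservation theorem. Suppose, toward a contradiction, that $\mathsf{ACA}_0 + \varphi \vdash^{\Sigma^1_1} \mathsf{RFN}_{\Pi^1_1}(\mathsf{ACA}_0 + \varphi)$ for some true $\Pi^1_1$ sentence $\varphi$. Unfolding the definition of $\vdash^{\Sigma^1_1}$, there is a true $\Sigma^1_1$ sentence $\psi$ with
$$\mathsf{ACA}_0 + \varphi + \psi \vdash \mathsf{RFN}_{\Pi^1_1}(\mathsf{ACA}_0 + \varphi).$$

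Next I would invoke Corollary \ref{bar-cor}, which is provable in $\mathsf{ACA}_0$ and hence available inside $\mathsf{ACA}_0 + \varphi + \psi$, to conclude that
$$\mathsf{ACA}_0 + \varphi + \psi \vdash \mathsf{RFN}_{\Pi^1_1}(\Sigma^1_1\text{-}\mathsf{AC}_0 + \varphi).$$
Since $\Sigma^1_1\text{-}\mathsf{AC}_0$ extends $\mathsf{ACA}_0$, strengthening the base theory on the left gives
$$\Sigma^1_1\text{-}\mathsf{AC}_0 + \varphi + \psi \vdash \mathsf{RFN}_{\Pi^1_1}(\Sigma^1_1\text{-}\mathsf{AC}_0 + \varphi).$$

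Finally, set $T := \Sigma^1_1\text{-}\mathsf{AC}_0 + \varphi$. This theory is recursively axiomatized, hence $\Sigma^1_1$-definable; it holds in the standard model (since $\Sigma^1_1\text{-}\mathsf{AC}_0$ does and $\varphi$ is true), hence is $\Pi^1_1$-sound; and it extends $\Sigma^1_1\text{-}\mathsf{AC}_0$. The displayed line says precisely that $T + \psi \vdash \mathsf{RFN}_{\Pi^1_1}(T)$, where $\psi$ is a true $\Sigma^1_1$ sentence---directly contradicting Lemma \ref{unboundedness}. Hence no such $\psi$ exists, which is the desired conclusion.

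The only step requiring care is the second one: using the formalized conservation result to transfer the reflection assertion from $\mathsf{ACA}_0 + \varphi$ up to $\Sigma^1_1\text{-}\mathsf{AC}_0 + \varphi$. This is exactly what makes the unboundedness lemma---stated only for extensions of $\Sigma^1_1\text{-}\mathsf{AC}_0$, since it rests on Theorem \ref{incompleteness}---applicable to a statement ostensibly about $\mathsf{ACA}_0$. Everything else is bookkeeping about truth, $\Pi^1_1$-soundness, and $\Sigma^1_1$-definability.
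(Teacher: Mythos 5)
Your proposal is correct and follows essentially the same route as the paper's own proof: assume the $\vdash^{\Sigma^1_1}$-derivation for a contradiction, use the formalized Barwise--Schlipf conservation (Corollary \ref{bar-cor}) to upgrade $\mathsf{RFN}_{\Pi^1_1}(\mathsf{ACA}_0+\varphi)$ to $\mathsf{RFN}_{\Pi^1_1}(\Sigma^1_1\text{-}\mathsf{AC}_0+\varphi)$, pass to the stronger base theory, and contradict Lemma \ref{unboundedness}. Your extra bookkeeping (unfolding $\vdash^{\Sigma^1_1}$ and verifying that $\Sigma^1_1\text{-}\mathsf{AC}_0+\varphi$ is $\Sigma^1_1$-definable, $\Pi^1_1$-sound, and extends $\Sigma^1_1\text{-}\mathsf{AC}_0$) is just a more explicit rendering of the same argument.
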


\begin{proof}
Suppose that: $$\mathsf{ACA}_0 +\varphi \vdash^{\Sigma^1_1} \mathsf{RFN}_{\Pi^1_1}\big( \mathsf{ACA}_0 + \varphi \big).$$
By Corollary \ref{bar-cor}: $$\mathsf{ACA}_0 +\varphi \vdash^{\Sigma^1_1} \mathsf{RFN}_{\Pi^1_1}\big( \Sigma^1_1\text{-}\mathsf{AC}_0 + \varphi \big).$$
Whence:
$$\Sigma^1_1\text{-}\mathsf{AC}_0 +\varphi \vdash^{\Sigma^1_1} \mathsf{RFN}_{\Pi^1_1}\big( \Sigma^1_1\text{-}\mathsf{AC}_0 + \varphi \big).$$
This conclusion contradicts Lemma \ref{unboundedness}.
\end{proof}

\subsection{A negative result}

Now we are ready to see that the second main theorem cannot be strengthened to cover all $\Sigma^1_1$-definable theories. Thanks to Fedor Pakhomov for suggesting the following proof.

\begin{proposition}\label{negative-prop}
There are $\Pi^1_1$-sound $\Sigma^1_1$-definable $T$ and $U$ extending $\mathsf{ACA}_0$ such that:
$$  T\leq_{\mathsf{WF}}U\text{ but }T\not\leq_{\mathsf{RFN}_{\Pi^1_1}}^{\Sigma^1_1}U.$$
\end{proposition}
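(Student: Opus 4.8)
The plan is to exploit that, for a $\Sigma^1_1$-definable theory $T$, the formalized principle $\mathsf{RFN}_{\Pi^1_1}(T)$ can be made---provably in $\mathsf{ACA}_0$---equivalent to the reflection principle of a theory that is strictly stronger than $T$ itself. Set $U:=\mathsf{ACA}_0$ and $\varphi:=\mathsf{RFN}_{\Pi^1_1}(\mathsf{ACA}_0)$, a true $\Pi^1_1$ sentence. Let $T$ be the theory axiomatized by
$$\tau(x)\;:\equiv\;\alpha_{\mathsf{ACA}_0}(x)\;\vee\;\big(x=\ulcorner\varphi\urcorner\wedge\neg\mathsf{RFN}_{\Pi^1_1}(\mathsf{ACA}_0+\varphi)\big),$$
where $\alpha_{\mathsf{ACA}_0}$ is a fixed arithmetical axiomatization of $\mathsf{ACA}_0$; in words, $T$ is $\mathsf{ACA}_0$ together with the axiom $\varphi$, except that $\varphi$ is thrown in only under the side condition $\neg\mathsf{RFN}_{\Pi^1_1}(\mathsf{ACA}_0+\varphi)$. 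There is no genuine self-reference here: $\varphi$ and $\mathsf{RFN}_{\Pi^1_1}(\mathsf{ACA}_0+\varphi)$ are concrete sentences that do not mention $T$.

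First I would clear away the routine points. Since $\neg\mathsf{RFN}_{\Pi^1_1}(\mathsf{ACA}_0+\varphi)$ is a $\Sigma^1_1$ sentence, $\tau$ is a $\Sigma^1_1$ formula, so $T$ is $\Sigma^1_1$-definable. Because $\varphi$ is true, $\mathsf{ACA}_0+\varphi$ is true in the standard model, so all of its theorems---in particular all of its $\Pi^1_1$ theorems---are true; that is, $\mathsf{RFN}_{\Pi^1_1}(\mathsf{ACA}_0+\varphi)$ is true, so the right disjunct of $\tau$ fails and the axioms of $T$ are exactly those of $\mathsf{ACA}_0$. Hence $T$ has exactly the theorems of $\mathsf{ACA}_0$: it is $\Pi^1_1$-sound, and $|T|_{\mathsf{WF}}=|\mathsf{ACA}_0|_{\mathsf{WF}}=|U|_{\mathsf{WF}}$, so $T\leq_{\mathsf{WF}}U$.

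The crux is a computation carried out inside $\mathsf{ACA}_0$. Splitting on whether $\mathsf{RFN}_{\Pi^1_1}(\mathsf{ACA}_0+\varphi)$ holds, the axiom predicate of $T$ provably coincides either with that of $\mathsf{ACA}_0$ or with that of $\mathsf{ACA}_0+\varphi$, so the corresponding provability predicates agree in each case, which yields
$$\mathsf{ACA}_0\vdash\mathsf{Pr}_T(\psi)\leftrightarrow\mathsf{Pr}_{\mathsf{ACA}_0}(\psi)\vee\big(\neg\mathsf{RFN}_{\Pi^1_1}(\mathsf{ACA}_0+\varphi)\wedge\mathsf{Pr}_{\mathsf{ACA}_0+\varphi}(\psi)\big).$$
Feeding this into the definition of $\mathsf{RFN}_{\Pi^1_1}(T)$ and distributing the universal number quantifier over $\psi$ (the side condition does not depend on $\psi$) gives
$$\mathsf{ACA}_0\vdash\mathsf{RFN}_{\Pi^1_1}(T)\leftrightarrow\mathsf{RFN}_{\Pi^1_1}(\mathsf{ACA}_0)\wedge\big(\neg\mathsf{RFN}_{\Pi^1_1}(\mathsf{ACA}_0+\varphi)\to\mathsf{RFN}_{\Pi^1_1}(\mathsf{ACA}_0+\varphi)\big);$$
since $(\neg P\to P)\leftrightarrow P$ and since $\mathsf{ACA}_0\vdash\mathsf{RFN}_{\Pi^1_1}(\mathsf{ACA}_0+\varphi)\to\mathsf{RFN}_{\Pi^1_1}(\mathsf{ACA}_0)$ (monotonicity of provability), this collapses to $\mathsf{ACA}_0\vdash\mathsf{RFN}_{\Pi^1_1}(T)\leftrightarrow\mathsf{RFN}_{\Pi^1_1}(\mathsf{ACA}_0+\varphi)$. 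Now if $T\leq_{\mathsf{RFN}_{\Pi^1_1}}^{\Sigma^1_1}U$ held, then, using $\mathsf{RFN}_{\Pi^1_1}(U)=\varphi$ and the equivalence just established, we would have $\mathsf{ACA}_0\vdash^{\Sigma^1_1}\varphi\to\mathsf{RFN}_{\Pi^1_1}(\mathsf{ACA}_0+\varphi)$, i.e.\ $\mathsf{ACA}_0+\varphi\vdash^{\Sigma^1_1}\mathsf{RFN}_{\Pi^1_1}(\mathsf{ACA}_0+\varphi)$, contradicting Proposition~\ref{unbound-aca} applied to the true $\Pi^1_1$ sentence $\varphi$. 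Hence $T\not\leq_{\mathsf{RFN}_{\Pi^1_1}}^{\Sigma^1_1}U$, as required.

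The step needing care is the $\mathsf{ACA}_0$-internal equivalence $\mathsf{RFN}_{\Pi^1_1}(T)\leftrightarrow\mathsf{RFN}_{\Pi^1_1}(\mathsf{ACA}_0+\varphi)$: one must confirm that $\mathsf{ACA}_0$ can perform the case split on the $\Sigma^1_1$ side condition and, on that condition, identify the axiom predicate of $T$ with that of $\mathsf{ACA}_0+\varphi$ well enough that the two provability predicates become provably coextensional. Once that is granted, the collapse is forced by the propositional tautology $(\neg P\to P)\equiv P$ together with the unboundedness result, and the remaining checks ($\Sigma^1_1$-definability, $\Pi^1_1$-soundness, and the ordinal identity) are immediate from the fact that $T$ has exactly the theorems of $\mathsf{ACA}_0$.
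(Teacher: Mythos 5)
Your proof is correct, and it runs on the same engine as the paper's: take $U=\mathsf{ACA}_0$, build a $\Sigma^1_1$-definable $T$ whose extra axioms are guarded by a false $\Sigma^1_1$ side condition---so that externally $T$ is literally $\mathsf{ACA}_0$, giving $\Pi^1_1$-soundness, $\Sigma^1_1$-definability, and $T\leq_{\mathsf{WF}}U$ for free, while internally $\mathsf{RFN}_{\Pi^1_1}(T)$ is provably strong---and then derive the contradiction from Proposition \ref{unbound-aca}. The difference is in the witness theory. The paper first fixes a true $\Pi^1_1$ sentence $F$ with $\mathsf{ACA}_0+\mathsf{RFN}_{\Pi^1_1}(\mathsf{ACA}_0)\nvdash^{\Sigma^1_1}F$ (obtained from Proposition \ref{unbound-aca}) and lets $T$ contain \emph{every} $\mathcal{L}_2$ sentence in case $F$ is false; then, provably in $\mathsf{ACA}_0$, $T$ is either $\mathsf{ACA}_0$ or inconsistent, so $\mathsf{ACA}_0\vdash\mathsf{RFN}_{\Pi^1_1}(T)\to F$ needs no computation with provability predicates, and $T\leq^{\Sigma^1_1}_{\mathsf{RFN}_{\Pi^1_1}}\mathsf{ACA}_0$ immediately contradicts the choice of $F$. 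You instead guard the single axiom $\varphi=\mathsf{RFN}_{\Pi^1_1}(\mathsf{ACA}_0)$ by the condition $\neg\mathsf{RFN}_{\Pi^1_1}(\mathsf{ACA}_0+\varphi)$, a Feferman-style construction; this costs you the (routine but correctly flagged) verification that $\mathsf{ACA}_0$ proves the coextensionality of the provability predicates under the case split, plus the collapse via $(\neg R\to R)\leftrightarrow R$, but it lets you apply Proposition \ref{unbound-aca} directly to $\varphi$ at the end rather than pre-selecting $F$. Both routes are sound; the paper's ``add everything'' guard simply trades your provability-predicate computation for the trivial observation that reflection fails for an inconsistent theory, which is why its internal analysis is shorter.
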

In short, the reason is that there are $\Sigma^1_1$-definable theories with small proof-theoretic ordinals but arbitrarily strong $\Pi^1_1$-reflection statements. We give a more formal argument here:

\begin{proof}
Let $F$ be a true $\Pi^1_1$ sentence such that
\begin{equation}\label{hypo}
    \mathsf{ACA}_0+\mathsf{RFN}_{\Pi^1_1}(\mathsf{ACA}_0)\nvdash^{\Sigma^1_1}F.
\end{equation} Note that there exists such an $F$ by Proposition \ref{unbound-aca}.

We now consider the theory:
$$T:=\mathsf{ACA}_0 + \{\varphi \in \mathcal{L}_2 \mid F\text{ is false}\}.$$
From the external perspective, we can see that $T$ is equivalent to $\mathsf{ACA}_0$, so its proof-theoretic ordinal is $\varepsilon_0$. So $T\leq_{\mathsf{WF}}\mathsf{ACA}_0.$

Immediately from the definition of $T$, one can see that $T$ is either inconsistent or equal to $\mathsf{ACA}_0$ depending on the truth-value of $F$. So it is easy to see that, within $\mathsf{ACA}_0$, $\mathsf{RFN}_{\Pi^1_1}(T)$ is equivalent to $\mathsf{RFN}_{\Pi^1_1}(\mathsf{ACA}_0) \wedge F$. In particular:
\begin{equation}\label{internal}
    \mathsf{ACA}_0 \vdash \mathsf{RFN}_{\Pi^1_1}(T) \to  F .
\end{equation}

Suppose toward a contradiction that $T\leq_{\mathsf{RFN}_{\Pi^1_1}}^{\Sigma^1_1}\mathsf{ACA}_0$, i.e., that:
\begin{equation}\label{contra}
    \mathsf{ACA}_0 \vdash^{\Sigma^1_1} \mathsf{RFN}_{\Pi^1_1}(\mathsf{ACA}_0) \to \mathsf{RFN}_{\Pi^1_1}(T).
\end{equation}

We then reason as follows:
\begin{flalign*}
\mathsf{ACA}_0 &\vdash \mathsf{RFN}_{\Pi^1_1}(T) \to  F  \text{ by (\ref{internal})};\\
\mathsf{ACA}_0 &\vdash^{\Sigma^1_1} \mathsf{RFN}_{\Pi^1_1}(\mathsf{ACA}_0) \to F \text{ by (\ref{contra})}; \\
\mathsf{ACA}_0 + \mathsf{RFN}_{\Pi^1_1}(\mathsf{ACA}_0) &\vdash^{\Sigma^1_1}  F.
\end{flalign*}
Yet this conclusion contradicts (\ref{hypo}).
\end{proof}

\bibliographystyle{plain}
\bibliography{bibliography}

\end{document}